\newtheorem{theorem}{Theorem}[section]
\newtheorem{proposition}[theorem]{Proposition}
\newtheorem{lemma}[theorem]{Lemma}
\newcommand\Prefix[3]{\vphantom{#3}#1#2#3}
 \theoremstyle{definition}
 \newtheorem{remark}[theorem]{Remark}
  \theoremstyle{remark}
\numberwithin{equation}{section}
\begin{document}

\title[Degenerate Hessian Equations]
{Existence and convexity of local solutions\\
to degenerate Hessian equations}

\author{Guji Tian and Chao-Jiang Xu}
\address{Guji Tian,
Wuhan Institute of Physics and Mathematics
\newline\indent
Chinese Academy of Sciences,Wuhan, P.R. China, 430071}
\email{tianguji@wipm.ac.cn}

\address{Chao-Jiang Xu,
Universit\'e de Rouen, CNRS UMR 6085, Laboratoire de Math\'ematiques
\newline\indent
76801 Saint-Etienne du Rouvray, France
\newline\indent
and
\newline\indent
School of Mathematics and statistics, Wuhan University  430072,
Wuhan, P. R. China
}
\email{chao-jiang.xu@univ-rouen.fr}

\date{\today}

\subjclass[2010]{ 35J60; 35J70}

\keywords{Degenerate Hessian equations, local solution, convex solution,
Nash-Moser-H\"ormander iteration.}

\maketitle

\begin{abstract}
In this work, we prove the  existence of local convex solution
to  the following $k-$Hessian equation
$$
S_{k}[u]=K(y)g(y, u, Du)
$$
in the neighborhood of a point $(y_0,u_0,p_0)\in \mathbb
R^{n}\times\mathbb R\times \mathbb R^{n}$, where $g\in C^\infty, g(y_0,u_0,p_0)>0,$  $K\in C^\infty$ is nonnegative near $y_0$, $K(y_0)=0$ and $\mbox{Rank}\, (D^2_y K) (y_0)\geq n-k+1.$
\end{abstract}



\section{Introduction}\label{section1}

In this work, we study the following
$k$-Hessian equation :
\begin{equation}\label{eq:1.1}
S_{k}[u]=f(y, u, Du),
\end{equation}
on the open domain $\Omega\subset\mathbb{R}^{n}$ with $2\le k\le n$, where  $f\ge 0$ is defined on $\Omega\times \mathbb{R}\times \mathbb{R}^n$ with $f(y_0,u_0,p_0)=0$.  When $u\in C^2$, the $k$-Hessian operator
$S_{k}[u]$ is defined by
$$
S_{k}[u]=S_k(D^2u)=\sigma_{k}[\lambda(D^{2}u)]=\sum_{1\leq i_{1}<
i_{2}\ldots<i_{k}\leq
n}\lambda_{i_{1}}\lambda_{i_2}\ldots\lambda_{i_{k}},
$$
where $S_k(D^2u)$ is the sum of all $k$-order principal minors of the Hessian matrix $ (D^2u)$, and $\lambda(D^{2}u)=(\lambda_{1}(D^{2}u),\ldots,\lambda_{n}(D^{2}u))$ are the
eigenvalues of the matrix $(D^{2}u)$. One origin of $k-$Hessian operator is from Christoffel-Minkowski problem, see \cite{GG,GM,GLM,GMZ}
and references therein, another one is from calibrated geometries in \cite{HL}. The background of $k-$Hessian operator in terms of differential geometry   can also be found in Section 4, \cite{IPY}.

When $f>0$, the solutions $u$ of \eqref{eq:1.1} is considered with $\lambda(D^2u)$ in the so-called G{\aa}rding cone :
\begin{equation*}
\Gamma_{k}(n)=\{\lambda=(\lambda_1,\lambda_2,\ldots,\lambda_n)\in\mathbb{R}^{n};\,
\sigma_j(\lambda)>0,1\leq j\leq k\}.
\end{equation*}
If $f\ge 0$,  the equation \eqref{eq:1.1} is called degenerate, in this case, we consider the solutions with
$$
\lambda(D^2u)\in \bar\Gamma_{k}(n)=\{\lambda\in \mathbb{R}^{n};\, \sigma_{j}(\lambda)\geq0
,1\leq j\leq k\}.
$$
A function $u\in C^2$ is called to be $k$-convex, if $\lambda(D^{2}u)\in \bar\Gamma_{k}(n). $ The $n$-convex function is simply called convex.

The convexity of solutions of \eqref{eq:1.1}  is an important problem in the field of
geometries analysis, including usual convexity,  power convexity, log-convexity, or quasi-convexity. For example, in the study of Christoffel-Minkowski problem (see \cite{GG,GM,GLM,GMZ}), an important subject is to prove the existence of a convex
body with prescribed area measure of suitable order, this is equivalent  to prove the microscopic convexity principle (constant rank theorem) for some $k$-Hessian type equation on the unit sphere $\mathbb{S}^n$. There is also a strong connection between convexity properties of solutions to elliptic and parabolic partial differential equations and Brunn-Minkowski type inequalities for associated variational functionals, see \cite{LMX,MX,SP}. When Guan \cite{GB} uses subsolution in place of all curvature restrictions on $\partial\Omega$ to construct local barriers for boundary estimates, it is an assumption that there exists a locally strictly convex function in $C^2(\overline{\Omega}),$ see Theorem 1.1 and 1.3, \cite{GB}.

The  microscopic convexity principle, with applications in geometric equations on manifolds, has been established in \cite{BG} for the very general fully nonlinear elliptic and parabolic operators of second order. Guan, Spruck and Xiao \cite{GSX} point out that the asymptotic Plateau problem for finding a complete strictly locally convex  hypersurface is reduced to the Dirichlet problem for a fully nonlinear equation, a special form of which is the $k-$Hessian equation, see Corollary 1.11, \cite{GSX} and they have proved the existence of  such hypersurface, it is especially interesting that they have proved that,  if $\partial\Omega$ is strictly (Euclidean) star-shaped about the origin, so is the unique solution,  see Theorem 1.5,  \cite{GSX}.
For the $k$-Hessian equation with  $k=2, n=3$, the power convexity for Dirichlet problem of equation \eqref{eq:1.1} with $f=1$, and log-convexity for the eigenvalue problem have been studied in \cite{LMX,MX}, see also \cite{SP}. The above convexity results are established on two facts, one is that the equations are elliptic, another is that the existence of classical  (at least $C^2$) solution has already known or can be proved. However, in many important geometric problem, the associated $k$-Hessian equation is degenerate (see \cite{HH}), and for the degenerate elliptic $k$-Hessian equation, one can only prove the existence of $C^{1,1}$ solution for Dirichlet problem (\cite{ITW}).

In this paper, we study the convexity of solution with following definition: for a convex domain $E$, the function $v\in C(E)$  is said to be {\em strictly convex} if
$$
v(ty+(1-t)z)< t \, v(y)+(1-t) v(z),\,\, 0<t <1,\,\, y,\, z\in E,\,\, y\neq z.
$$
Which, in case $v\in C^2(E)$, is equivalent to
\begin{equation}\label{eq:1.3}
 \sum_{i,j=1}^n(y_i-z_i)(y_j-z_j) \int_0^1\int_0^1\frac{\partial^2v}{\partial {x_i}\partial x_j}(x(s,\mu))\, d\mu\, ds>0
\end{equation}
with $x(s,\mu)=(s\mu +(1-s)t)y+(s(1-\mu)+(1-s)(1-t))z$, this shows that the positive definiteness of Hessian matrix $(D^2v)$ is a sufficient condition for
the strict convexity, but not necessary.

However if $u\in C^2$ is a $k$-convex solution of $S_k[u] = f (y)\ge 0$ with $2\le k < n$
and $f(y_0) = 0$, then $S_{k+1}[u](y_0) > 0$ will never occur, so that there are two possibilities: 1)
$S_{k+1}[u](y_0) < 0$, in this case, $u$ is not $(k + 1)$-convex; 2) $S_{k+1}[u](y_0) = 0$, in this case,  it is shown in
Theorem 1.1 of \cite{TWX2} that, if $S_k[u](y_0) = S_{k+1}[u](y_0) = 0$, then $S_l[u](y_0) =0$ for $k\leq l\leq n.$
 In particular, if $f\equiv 0$, since the case $S_{k+1}[u]>0$ will never occur, then either $S_{k+1}[u]<0$ in some open subset of $\Omega$ or $S_{k+1}[u]\equiv0$  in $\Omega$ itself,  in the former case  $u$ is not $(k+1)-$convex and let alone strictly convex; in the latter case, $S_{l}[u]\equiv 0$ for $k\leq l\leq n$, then  the graph $(y; u(y))$ for $k = 2$
has the vanishing sectional curvature and  must be a cylinder or a plane, see \cite{Shi} and \cite{Ush},
meanwhile the graph $(y; u(y))$ for $k > 2$, by Lemma 3.1 of \cite{Har}, is a surface of constant
nullity (at least) $n-k + 1$ and then is a $(n-k + 1)$-ruled surface. Therefore if  $f\equiv 0$, the
solution $u$ to \eqref{eq:1.1} is not strictly convex (at least) along the rulings.

Motivated by above analysis,  in this work, we study the local solutions for the following equation, $2\le k\leq n$,
\begin{equation}\label{eq:1.4}
S_k[u]=K(y)g(y,u,Du)\, ,
\end{equation}
with the following assumptions
\begin{equation*}
(H)\qquad\qquad \left\{
\begin{array}{l}
 K\in C^\infty  \ \mbox {is nonnegative in a neighbourhood of} \ y_0\in \mathbb{R}^n, \\
K(y_0)=0, \mbox{ Rank}\, (D^2 K)(y_0)\geq n-k+1 ,\\
 g\in C^\infty \  \mbox{near} \  Z_0=(y_0,u_0,p_0)\, \mbox{and} \, g(Z_0)>0.
\end{array}\right.
\end{equation*}
This assumption is independent of coordinates. Our main Theorem is :

\begin{theorem}\label{main}
If $K, g$ satisfy the assumption (H), then for any $s\ge  2[\frac n 2]+5$, the equation \eqref{eq:1.4} admits a  strictly convex $H^s$-local solution in a neighbourhood of  $y_0\in \mathbb{R}^n$.
\end{theorem}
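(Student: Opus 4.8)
The plan is to construct the solution via a Nash-Moser-Hörmander iteration scheme, starting from a carefully chosen approximate solution that encodes the convexity we want. First I would normalize: after an affine change of coordinates and subtracting an affine function, assume $y_0=0$, $u_0=0$, $p_0=0$, and arrange the Hessian of $K$ at $0$ so that its nondegenerate part occupies a coordinate subspace of dimension $m=n-k+1$. The natural ansatz is to look for $u$ of the form $u = Q(y) + v(y)$, where $Q$ is a fixed convex quadratic polynomial (strictly positive definite) and $v$ is a small perturbation vanishing to high order at $0$; the point of building in the strictly convex quadratic $Q$ is precisely that, by the integral criterion \eqref{eq:1.3}, once $D^2 Q > 0$ the sum $D^2(Q+v)$ need not be positive definite everywhere for $Q+v$ to be strictly convex — we only need the averaged Hessian in \eqref{eq:1.3} to stay positive, which a small $v$ cannot destroy. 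So strict convexity of the final solution will be essentially automatic once $v$ is controlled in $C^2$ (hence the Sobolev exponent $s \ge 2[\tfrac n2]+5$, which by Sobolev embedding gives $H^s \hookrightarrow C^2$ with room to spare).

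Next I would set up the linearization. Writing $F[u] = S_k[u] - K g(y,u,Du)$, the linearized operator at a $k$-convex $u$ is $L_u w = S_k^{ij}[u]\, \partial_{ij} w - K\,(\partial_u g)\, w - K\,(\partial_{p_i} g)\,\partial_i w$, where $(S_k^{ij})$ is the Newton tensor, which is positive definite when $\lambda(D^2u)\in\Gamma_{k-1}$. Since $Q$ contributes a uniformly positive definite Hessian, $L_u$ is uniformly elliptic near $0$ for $u$ close to $Q$, and the zeroth/first order terms carry a factor $K$ which is small near $y_0$; so $L_u$ is a genuine elliptic operator and one gets the usual tame a priori estimates: $\|w\|_{H^{s+2}} \lesssim \|L_u w\|_{H^s} + \|u\|_{H^{s+2}}\|L_u w\|_{H^{s_0}}$ on small balls, after localizing with cutoffs. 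This is the $C^\infty$-tame family of right inverses (in the sense of Hörmander) needed to run the iteration. The smoothing operators $S_\theta$ are the standard ones on $\mathbb{R}^n$ (or on a fixed small ball, via extension).

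The heart of the matter — and the main obstacle — is the construction of a sufficiently good \emph{first} approximate solution $u_0$, i.e. one for which $F[u_0]$ vanishes to high enough order at $y_0$ that the Newton iteration converges, \emph{and} which is $k$-convex. Here the rank hypothesis $\mathrm{Rank}\,(D^2 K)(y_0)\ge n-k+1$ is exactly what is needed: one looks for $u_0$ as a polynomial, $u_0 = Q + (\text{higher order corrections})$, and the obstruction to solving $S_k[u_0] = K g$ formally to order $N$ at $0$ is a sequence of linear algebraic equations whose solvability is governed by the surjectivity of the symbol of $L_Q$ restricted to the directions in which $K$ vanishes to second order; the condition $\mathrm{rank}\ge n-k+1$ guarantees $K$ grows quadratically in enough directions that $S_k$ of the correction can match it. Concretely, I expect one chooses $Q$ with eigenvalues adapted to $D^2K(0)$ — large in the $k-1$ "flat" directions of $K$ and comparable in the $m=n-k+1$ "curved" directions — so that the leading term of $S_k[Q + w]$ picks out $\sigma_1$ of the restriction of $D^2 w$ to the curved subspace, which can then absorb the quadratic leading term of $K g(0)$. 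Verifying that this formal solution can be chosen $k$-convex (not merely a formal power series) and that the remainder $F[u_0]$ is small in the relevant tame norms — small enough to beat the loss of derivatives in the iteration, which dictates the bound $s\ge 2[\tfrac n2]+5$ — is the delicate bookkeeping step. Once $u_0$ and the tame right inverse are in hand, the Nash-Moser scheme produces a limit $u\in H^s$ solving \eqref{eq:1.4} near $y_0$, and since $u - Q$ is small in $C^2$, \eqref{eq:1.3} gives strict convexity.
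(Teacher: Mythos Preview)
Your proposal contains a fundamental structural error. You take the quadratic part $Q$ to be \emph{strictly positive definite}; but then $S_k[Q]>0$, while the equation forces $S_k[u](y_0)=K(y_0)g(Z_0)=0$. Since your perturbation $v$ vanishes to high order at $y_0$, you would have $D^2u(y_0)=D^2Q$ and hence $S_k[u](y_0)>0$, a contradiction. More generally, any $C^2$ solution must satisfy $\sigma_k(\lambda(D^2u(y_0)))=0$, so $D^2u(y_0)$ lies on $\partial\Gamma_k$ and cannot be positive definite. This is precisely why the paper takes the quadratic part $\varphi=\tfrac12\sum_{j=1}^{k-1}\tau_j y_j^2$ with only $k-1$ positive eigenvalues (so $\sigma_k(\tau)=0$), and why the remark after \eqref{eq:1.3} stresses that positive definiteness of the Hessian is \emph{not} necessary for strict convexity.

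This error propagates through the rest of your outline. With the correct ansatz, the linearized operator $L_\varphi=\sum_j\sigma_{k-1,j}(\tau)\partial_j^2$ is \emph{degenerate} elliptic: the coefficients $\sigma_{k-1,j}(\tau)$ vanish for $1\le j\le k-1$ and equal $\sigma_{k-1}(\tau)>0$ only for $k\le j\le n$. So your claim of uniform ellipticity and standard tame estimates is wrong; the paper has to work with a genuinely degenerate operator (adding $\theta\Delta$ with $\theta=\sup|G(w)|$ to restore nonnegativity, imposing periodicity in the degenerate $x'$ directions, Dirichlet only on $\partial Q_{\delta_0}$), and the a~priori estimate \eqref{eq:5.38} loses two derivatives---this, not mere bookkeeping, is the reason Nash--Moser is needed. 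Finally, strict convexity is \emph{not} automatic: since $D^2\varphi$ is only positive semidefinite, an arbitrary $C^2$-small perturbation could destroy convexity in the $x''$ directions. The paper obtains it from a carefully designed quartic term $P(y)$ (giving $P_{jj}\ge 4\alpha|y''|^2$ and strict diagonal dominance) together with the nontrivial Lemmas \ref{th:6.5}--\ref{th:6.7} showing $w_{ij}(x',0)=w_{ijp}(x',0)=0$ for $i\ge k$ or $j\ge k$, which uses the equation itself. Your argument that ``strict convexity will be essentially automatic once $v$ is controlled in $C^2$'' does not survive the correct (degenerate) ansatz.
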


Remark that $u=\frac{1}{2}\sum_{i=1}^{n-1}y_i^2+\frac{1}{12}y_n^4$ is a strictly convex solution of the following Monge-Amp\`{e}re equation :
$$
\det D^2u= y_n^2\, .
$$
But the Hessian matrix $(D^2 u)$ is not positive definite at origin.

This article is arranged as follow: In Section \ref{section2a}, we will introduce the idea of how to construct convex local
solution in terms of $K(y)$. In Section \ref{section2}, we will construct the first order approximate solution $\psi(y)$
which is strictly convex. The Section \ref{section3} will be devoted to proving the degenerate ellipticity of the linearized operator
of $S_k[u]$ around $\psi$.  In Section \ref{section4}, we will use Lax-Milgram theorem to prove the existence
of $H^0$-weak solution and their a priori $H^s$-estimates. In Section \ref{section5}, we will prove the existence
of $k$-convex solution by the Nash-Moser-H\"ormander iteration procedure. In section \ref{section6}, we will prove the strict convexity of
the local solution obtained in Section \ref{section5}. {Section \ref{section7}, as an appendix, is devoted to  the estimates of eigenvalues and eigenvectors for a matrix after a  small perturbation, the conclusion of which is a generalization of Lemma 1.1 of \cite{HZ}.


\section{Schema of construction of convex local solutions} \label{section2a}

The assumption (H) is independent of coordinates. Now we choose special coordinates under which  the leader term of  the solution can be explicitly expressed. Since the degeneracy is come from the term $K$, for the simplicity of notations and also
 computation, we suppose that
$$
(B)\qquad\qquad g\equiv 1,\,\,  \mbox{Rank}\, (D^2 K)(y_0)= n-k+1 .
$$
By a translation $y\rightarrow y+y_0$ and  a change of unknown function $u\rightarrow u-u(y_0)-Du(y_0)\cdot y,$ we can assume $Z_0=(0, 0, 0)$. On the other hand, the solution is searched for  locally, that is, we assume without loss of generality that $K(y)$ is defined in some neighborhood of origin and
\begin{equation}\label{eq:8.2}
K(y)=\sum_{j=k}^n c_jy_j^2+O(|y|^3)
\end{equation}
where  $2c_j>0, \, k\leq j\leq n$ are the positive eigenvalues of $(D^2K)(0)$.
In order to describe the ``localness'',  small $\varepsilon>0$  is introduced by the change of variables $y=\varepsilon^2x$,
we fix a domain
$$
 \Omega=Q_\pi\times Q_{\delta_0}\subset \mathbb{R}^{k-1}\times \mathbb{R}^{n-k+1}
$$
with $\delta_{0}>0$ being chosen small later and $x'=(x_1,\cdots,x_{k-1}), x''=(x_k,\cdots,x_n), $
$$
Q_\pi=\{x'\in \mathbb{R}^{k-1};\,\, |x_{i}|< \pi,1\leq i\leq k-1\},
\quad
Q_{\delta_0}=\{ x''\in \mathbb{R}^{n-k+1};\,\, \sum_{i=k}^n|x_i|^2< \delta_0^2\}.
$$
We will determine some $\varepsilon_0>0$ and study the equation \eqref{eq:1.4} in the following form
 \begin{equation}\label{eq:8.8++}
S_k[u]=\tilde{K} \ \ \mbox{in}\ \ \Omega_{\varepsilon_0}=\left\{y=\varepsilon_0^2x; x\in \Omega\right\}
\end{equation}
with
\begin{equation}\label{eq:8.14}
 \tilde{K}(y)=(1- \chi(\varepsilon^{-2}y'))\sum_{i=k}^{n}c_iy_i^2+\chi(\varepsilon^{-2}y')K(y),
\end{equation}
where $\chi (x')\in C^\infty_0(Q_{\pi})$ is a cutoff function equal to $1$ if $|x'|\leq \frac{\pi}{2}$, equal to zero if $|x'|\geq \pi,$ and $0\leq \chi\leq 1$ . The local solution of \eqref{eq:8.14} is also the one of \eqref{eq:1.4}.  The aim of introduce of function $\chi(x')$ is to guarantee  the periodicity with respect to variable $x'$ for nonhomogeneous terms and the coefficients  of  all the linearized equations, which is important and convenient for existence of solution because the linearized operator $L_G(w)$ of \eqref{eq:8.18} may be degenerate in the direction $x'.$

We will construct the local solution of equation \eqref{eq:8.8++} in the following form
\begin{equation}\label{eq:2.abc+}
u(y)=\frac{1}{2}\sum_{j=1}^{k-1}\tau_jy_j^2+P(y)+\varepsilon^{\frac{17}{2}} w(\varepsilon^{-2}y).
\end{equation}
So the construction of solution is by three steps:

\smallskip
{\bf 1) Solutions of the homogeneous equation}

Let $\tau=(\tau_1, \cdots, \tau_{k-1}, 0,\cdots, 0)$ with $\tau_1>\cdots> \tau_{k-1}>0$, then the convex function
\begin{equation}\label{2.5}
\varphi(y)=\frac{1}{2}\sum_{j=1}^{k-1}\tau_jy_j^2
\end{equation}
satisfies the homogenous equation $S_k[\varphi]=0$, and the linearized operators
\begin{equation}\label{2.5+1}
\mathcal{L}_\varphi=\sum_{j=1}^n \sigma_{k-1, j}(\tau)\partial^2_j
\end{equation}
is degenerate elliptic with
$$
 \sigma_{k-1, j}(\tau)=0, 1\le j\le k-1; \,\,\, \sigma_{k-1, j}(\tau)= \sigma_{k-1}(\tau)=\prod^{k-1}_{l=1}\tau_l>0,\,\, k\le j\le n.
$$
We have also  $S_{k+1}[\varphi]=\cdots=S_n[\varphi]=0$.

Remark that, in \cite{TWX,TWX2}, we choose $\tau\in \partial\Gamma_k(n)$  with  $\sigma_{k+1}(\tau)<0$,
so $\varphi$ is not $(k+1)$-convex; also the solutions constructed in \cite{CH} must not be  $(k+1)$-convex, because in these cases every linearized operator \eqref{2.5+1} is uniformly elliptic. But in present  work, we want to construct the local strictly convex solution, so we can't make that choice. On the other hand, the function $\varphi$
defined in \eqref{2.5} is only {\em weakly} convex, so  it is difficult to guarantee the convexity after a perturbation.

\smallskip
{\bf 2) Approximate strictly convex solution}

Using the assumption (H) on $K$, we construct a function $P$ such that
\begin{equation}\label{2.6}
\psi(y)=\frac{1}{2}\sum_{j=1}^{k-1}\tau_jy_j^2+P(y)
\end{equation}
satisfies
$$
S_k[\psi]=\tilde K+O(1)\varepsilon^{\frac 92},
$$
and $\psi$ is strictly convex on $\Omega_{\varepsilon_0}$. The construction of the function $P$ is algebraic by using
the assumption (H) of $K$.

\smallskip
{\bf 3) Nash-Moser-H\"ormander iteration}

We construct finally the smooth function $w$ such that the function $u$ defined by \eqref{eq:2.abc+} is a local solution
of  equation \eqref{eq:8.8++}. We use the Nash-Moser-H\"ormander iteration procedure:
\begin{equation}\label{eq:6.9+1}
\left\{
\begin{array}{l}
 w_0=0,w_{m+1}=w_m+S_m\rho_m\\
L_G(w_m)\rho_m+\theta_m\triangle \rho_m=g_m, \indent \text{in} \indent x\in \Omega.
\end{array}\right.
\end{equation}
where $\{S_m\}$ is a family of smoothing operators,
\begin{equation}\label{eq:6.10}
 g_m=-G(w_m)=\frac{1}{\varepsilon^{\frac{9}{2}}}\left\{S_k(\psi+\varepsilon^{\frac{17}{2}} w_m(\varepsilon^{-2}y) )- \tilde{K}\right\},\,\, \theta_m=\sup_{\Omega}|G(w_m)|=\|g_m\|_{L^\infty},
\end{equation}
and
\begin{equation*}
L_{G}(w)=\sum_{i,j=1}^{n}\frac{\partial S_{k}(\mathbf{r}(w))}{\partial r_{ij}}\partial_{i}\partial_{j},
\end{equation*}
where
$$
\mathbf{r}(w)=\left(\sum_{l=1}^{k-1}\delta_{i}^{j}\delta_j^l\tau_l+ P_{ij}(\varepsilon^2x)+
 \varepsilon^{\frac{9}{2}} w_{ij}(x)\right)_{1\le i, j\le n}.	
$$
The procedure is to prove the existence and the convergence of the sequence $w_m\,\to\, w$ in some Sobolev space with
$\theta_m\to 0$ which imply that the function $u$ defined in \eqref{eq:2.abc+} by $w$ is a local solution of equation \eqref{eq:8.8++}. We also need to prove that
the perturbation doesn't destruct the strictly convexity of $\psi$ constructed by \eqref{2.6}.

Remark that the linearized equation is degenerate elliptic, so that there is a loss of the regularity for the \`a priori estimate of solution
$\rho_m$,  but the coefficients of linearized operators depends on $D^2 w_m$, so that we need to smoothing the solution $\rho_m$
to continue the iteration \eqref{eq:6.9+1} for $m\in\mathbb{N}$. This is quite different from the procedure of iteration used in \cite{TWX2} where the linearized equation is uniformly elliptic.


\section{The first order approximate solutions} \label{section2}

Since $K(y)$ attains its minimum $0$ at origin, the critical-point theorem implies $\nabla K(0)=0$.
Then we have

\begin{proposition}
Suppose that $K(y)$ satisfies assumption $(H)$ and \eqref{eq:8.2}, then we have the following decomposition
$$
K(y)= K(y',0)+\frac{1}{2}\sum_{i=k}^n \frac{\partial^2K}{\partial y_i^2}(y',0)y_i^2+R(y)
$$
where $K(y',0)$  vanishes at $y'=0$ up to  order greater than four and
$$
R(y)=\sum_{i=k}^n\frac{\partial K}{\partial y_i}(y',0)y_i+\frac{1}{2}\sum_{i,j=k,i\neq j}^n\frac{\partial^2K}{\partial y_i\partial y_j}(y',0)y_iy_j+O(1)|y''|^3.
$$
\end{proposition}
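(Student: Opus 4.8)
The plan is to obtain the decomposition by a systematic Taylor expansion of $K$ in the variables $y'' = (y_k,\dots,y_n)$ about the slice $y'' = 0$, treating $y'$ as a parameter, and then to read off the vanishing orders of the coefficient functions in $y'$ from the two facts already available: that $K$ attains an interior minimum $0$ at the origin (so $\nabla K(0)=0$), and that $\mathrm{Rank}\,(D^2K)(0) = n-k+1$ with the positive eigenvalues associated to the $y''$-directions, as encoded in \eqref{eq:8.2}.

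First I would write the second-order Taylor expansion of $K(y',y'')$ in $y''$ at $y''=0$, namely
\[
K(y) = K(y',0) + \sum_{i=k}^n \frac{\partial K}{\partial y_i}(y',0)\, y_i
  + \frac12\sum_{i,j=k}^n \frac{\partial^2 K}{\partial y_i\partial y_j}(y',0)\, y_i y_j
  + O(1)|y''|^3,
\]
which is valid with a smooth remainder since $K\in C^\infty$ near the origin. Splitting the quadratic sum into its diagonal part $\frac12\sum_{i=k}^n \frac{\partial^2K}{\partial y_i^2}(y',0)y_i^2$ and its off-diagonal part, and collecting the first-order term, the off-diagonal term, and the cubic remainder into $R(y)$, gives exactly the claimed form of $R$. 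So the only substantive point is the assertion that $K(y',0)$ vanishes at $y'=0$ to order strictly greater than four.

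For that, note $K(y',0)$ is, up to the $O(|y|^3)$ error, the restriction to $y''=0$ of the expression in \eqref{eq:8.2}: since \eqref{eq:8.2} reads $K(y) = \sum_{j=k}^n c_j y_j^2 + O(|y|^3)$, setting $y''=0$ shows $K(y',0) = O(|y'|^3)$, so it vanishes to order at least three. To push this to order greater than four I would argue as follows. The function $y'\mapsto K(y',0)$ is nonnegative (since $K\ge 0$ near $y_0$) and attains its minimum $0$ at $y'=0$; hence its gradient and Hessian in $y'$ vanish at $y'=0$, i.e. it vanishes to order at least two — but we already know order at least three. The key input is the rank condition: $(D^2K)(0)$ has rank exactly $n-k+1$, and by \eqref{eq:8.2} its range is spanned by $\partial_{y_k},\dots,\partial_{y_n}$, so all second derivatives involving an index $\le k-1$ vanish at the origin; combined with nonnegativity of $y'\mapsto K(y',0)$ near $0$, a nonnegative smooth function of $y'$ whose $2$-jet vanishes must have vanishing $3$-jet as well, and then its fourth-order Taylor polynomial is a nonnegative quartic form in $y'$ — but such a form that is dominated near $0$ by the actual function forces, together with the earlier $O(|y'|^3)$ bound refined to $o(|y'|^3)$ via the rank condition, the quartic part to be compatible with the minimum; one then checks that the quartic form must in fact be degenerate in a way that, combined with the smoothness and nonnegativity, yields vanishing to order $>4$. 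I expect \emph{this} last step — squeezing the vanishing order from "at least three/four" up to "greater than four" purely from nonnegativity plus the rank hypothesis — to be the main obstacle, and the place where one must be careful about what "vanishes to order greater than four" precisely means (presumably: the fourth-order Taylor polynomial vanishes, so $K(y',0) = o(|y'|^4)$, equivalently $K(y',0)=O(|y'|^5)$ by smoothness). The resolution should come from observing that a smooth nonnegative function whose Hessian at a minimum has a kernel direction $v$ must, along that direction, vanish to order at least four in that slot, and here \emph{every} $y'$-direction lies in the kernel of $(D^2K)(0)$; a short argument using the nonnegativity of the restricted quartic together with the constraint that $K$ itself (not just its restriction) is nonnegative — in particular that the full quadratic-in-$y''$ plus quartic-in-$y'$ model is nonnegative — forces the pure-$y'$ quartic to vanish, leaving $K(y',0)=o(|y'|^4)$.
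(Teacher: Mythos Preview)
The paper gives no proof of this proposition; it is stated as a direct consequence of Taylor's formula and the structural assumptions on $K$. Your Taylor expansion in $y''$ with $y'$ as parameter is exactly right and yields the decomposition and the stated form of $R(y)$ immediately.

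The gap is in your treatment of the vanishing order of $K(y',0)$. You read ``vanishes at $y'=0$ up to order greater than four'' as $K(y',0)=o(|y'|^4)$ (equivalently $O(|y'|^5)$) and then try to squeeze this out of nonnegativity plus the rank condition. That claim is simply false in general, and your sketch cannot be completed: take $n=2$, $k=2$, and $K(y_1,y_2)=y_2^2+y_1^4$. This $K$ is smooth, nonnegative, vanishes at $0$, and has $\mathrm{Rank}\,D^2K(0)=1=n-k+1$ with \eqref{eq:8.2} satisfied; yet $K(y_1,0)=y_1^4$ vanishes to order exactly four. So the ``short argument \ldots forces the pure-$y'$ quartic to vanish'' does not exist.

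The phrasing in the paper is awkward, but its intended meaning is clarified by how the claim is \emph{used}: in the proof of the next proposition the authors invoke it only to obtain $K(y',0)=O(|y'|^4)$ (so that $P_{jj}(y',0)=O(|y'|^4)$). Read this way, the statement is exactly what you already have the ingredients for. From \eqref{eq:8.2} the $y'$-block of $D^2K(0)$ vanishes, so the $2$-jet of $y'\mapsto K(y',0)$ at $0$ is zero. Since this restriction is smooth and nonnegative with vanishing $2$-jet, its cubic Taylor term must vanish as well (a nonzero cubic changes sign along rays), hence $K(y',0)=O(|y'|^4)$. Stop there; do not attempt to push further.
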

In particulary, for $y=\varepsilon^2 x, x\in\Omega$,  we have
$$
R(y)=O(1)\varepsilon^{6}.
$$

Formally, if $u\in C^{3,1}$ is a solution to \eqref{eq:1.4}, by Taylor expansion
\begin{equation}\label{eq:1.5}
u(y)=\sum_{i,j=1}^nu_{ij}(0)y_iy_j+o(|y|^2)
\end{equation}
substituting \eqref{eq:1.5} into \eqref{eq:1.4}, we see that,
$$
S_k(D^2u(0))=S_k(u_{ij}(0))=\lim_{y\rightarrow 0}S_k(D^2u(y))=K(0)=0.
$$
Therefore, a smooth (at least $C^{3,1}$) local solution to \eqref{eq:1.4} is a solution of $S_k[u]=0$  plus a small perturbation $o(|y|^2)$.
So we wish to construct the  first order approximate solution as following form
$$
\psi(y)=\frac{1}{2}\sum_{i=1}^{k-1}\tau_iy_i^2+P(y)
$$
with $\tau_1>\tau_2>\ldots>\tau_{k-1}>0$, such that
$$
S_k[\psi]=\tilde{K}+O(1)\varepsilon^{\frac 92},
$$
which is difficult to arrive at. Our observation is that $\sigma_{k-1}(\tau)\sum_{j=k}^n P_{jj}(y)$ is the main part of $S_k[\psi]$, so we only  need to find out $P(y)$ to satisfy the weaker version
$$
\sigma_{k-1}(\tau)\sum_{j=k}^n P_{jj}(y)=\tilde{K}-\chi(\varepsilon^{-2}y')R(y).
$$
This is our trick how to construct $P(y).$ Let
\begin{equation}\label{eq:8.8}
\begin{split}
P(y)&\equiv \frac{1}{2(n-k+1)\sigma_{k-1}(\tau)}\chi(\varepsilon^{-2}y')K(y', 0)\sum_{j=k}^ny_j^2\\
&+\frac{1}{24\sigma_{k-1}(\tau)}\chi(\varepsilon^{-2}y')\sum_{i=k}^n\left(\frac{\partial^2K}{\partial y_i^2}(y',0)-2c_i\right)y_i^4
\\
&+\frac{1}{12}\sum_{i=k}^n\left[\frac{c_i}{\sigma_{k-1}(\tau)}-4\alpha (n-k)\right]y_i^4+\alpha\sum_{j=k}^n\sum_{i=k,i\neq j}^ny_i^2y_j^2
\end{split}
\end{equation}
where
\begin{equation}\label{eq:1.15++}
0<\alpha<\frac{1}{16(n-k)^2+4(n-k+1)} \min_{k\leq j\leq n}\left\{\frac{c_j}{2\sigma_{k-1}(\tau)}\right\}.
\end{equation}

\begin{remark}
Giving an example, $K^1(y)=K^1(y'')=\sum_{i=k}^{n}c_iy_i^2$, then
$$
P(y)=P^1(y'')=\frac{1}{12}\sum_{i=k}^n\left[\frac{c_i}{\sigma_{k-1}(\tau)}-4\alpha (n-k)\right]y_i^4+
\alpha\sum_{j=k}^n\sum_{i=k,i\neq j}^ny_i^2y_j^2.
$$
We have
$$
\sigma_{k-1}(\tau)\sum_{j=k}^n P^1_{jj}(y'')=K^1(y''),
$$
and the strictly convex function $\psi^1(y)=\frac{1}{2}\sum_{i=1}^{k-1}\tau_iy_i^2+P^1(y'')$ satisfies
$$
S_k(\psi^1)=K^1(y'')+O(|y''|^4).
$$
\end{remark}

The direct calculation gives

\begin{proposition}
Let $P(y)$ be defined in \eqref{eq:8.8}, then
\begin{equation}\label{eq:8.13}\left\{
\begin{array}{l}
\sigma_{k-1}(\tau)\sum_{j=k}^n P_{jj}(y)=\tilde{K}-\chi(\varepsilon^{-2}y')R(y);\\
 P_{jj}(y',0)=O(1)|y'|^4,\quad k\le j\le n,\\
 P_{ij}(y)=8\alpha y_iy_j,\ \ \  i\neq j, \quad k\le i,j\le n,
\end{array}\right.
\end{equation}
and
\begin{equation}\label{eq:8.13+}
 P_{jj}(y)\geq 4\alpha |y''|^2,\quad k\le j\le n.
\end{equation}
For small $|y|$,  the minor matrix $(P_{ij})_{k\leq i,j\leq n}$ is  strictly diagonally dominant, more explicitly,  for fixed $k\leq j_0\leq n,$
\begin{equation}\label{eq:8.14++}
 P_{j_0j_0}(y)\geq 2\alpha |y''|^2+\sum_{i=k,i\neq j_0}^n|P_{ij_0}(y)|.
\end{equation}
\end{proposition}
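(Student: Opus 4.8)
The plan is to differentiate the explicit formula \eqref{eq:8.8} directly, block by block. The key simplification is that $\chi(\varepsilon^{-2}y')$, $K(y',0)$, $\frac{\partial^2 K}{\partial y_i^2}(y',0)$ and the constants $c_i,\sigma_{k-1}(\tau),\alpha$ all depend on $y'$ only (or not on $y$ at all), so that applying $\partial^2_{y_j}$ or $\partial_{y_i}\partial_{y_j}$ with $k\le i,j\le n$ differentiates only the polynomial factors in $y''$. Running through the four summands of $P$, the first contributes $\frac{\chi\,K(y',0)}{(n-k+1)\sigma_{k-1}(\tau)}$ to each $P_{jj}$, the second $\frac{\chi}{2\sigma_{k-1}(\tau)}\bigl(\frac{\partial^2 K}{\partial y_j^2}(y',0)-2c_j\bigr)y_j^2$, the third $\bigl(\frac{c_j}{\sigma_{k-1}(\tau)}-4\alpha(n-k)\bigr)y_j^2$, and the fourth $4\alpha\sum_{i=k,\,i\ne j}^n y_i^2$; moreover only the fourth summand produces mixed second derivatives, giving $P_{ij}=8\alpha y_iy_j$ for $i\ne j$, which is the third line of \eqref{eq:8.13}. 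Setting $y''=0$ annihilates all but the first contribution, and since $K(y',0)$ vanishes at $y'=0$ to order greater than four by the decomposition above, this gives $P_{jj}(y',0)=O(1)|y'|^4$, the second line of \eqref{eq:8.13}.

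For the first line of \eqref{eq:8.13} I would sum the four diagonal contributions over $k\le j\le n$. The $\alpha$-terms coming from the third and fourth summands cancel and leave $\frac{1}{\sigma_{k-1}(\tau)}\sum_{j=k}^n c_jy_j^2$; the first summand sums to $\frac{\chi K(y',0)}{\sigma_{k-1}(\tau)}$ and the second to $\frac{\chi}{2\sigma_{k-1}(\tau)}\sum_{j=k}^n\bigl(\frac{\partial^2 K}{\partial y_j^2}(y',0)-2c_j\bigr)y_j^2$. Collecting the $\chi$-terms and inserting the decomposition $K(y)=K(y',0)+\frac{1}{2}\sum_{j=k}^n\frac{\partial^2 K}{\partial y_j^2}(y',0)y_j^2+R(y)$, the total becomes $\sigma_{k-1}(\tau)\sum_{j=k}^n P_{jj}(y)=\chi\bigl(K(y)-R(y)\bigr)+(1-\chi)\sum_{j=k}^n c_jy_j^2$, which is exactly $\tilde K-\chi(\varepsilon^{-2}y')R(y)$ by the definition \eqref{eq:8.14} of $\tilde K$.

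It then remains to read off the convexity estimates. Since $\frac{\partial^2 K}{\partial y_j^2}(0)=2c_j>0$ and $K\ge0$ near the origin, after shrinking the neighbourhood we have $\frac{\partial^2 K}{\partial y_j^2}(y',0)\ge c_j$ and the first, nonnegative contribution only helps, so the above expansion of $P_{jj}$ yields $P_{jj}(y)\ge\bigl(\frac{c_j}{2\sigma_{k-1}(\tau)}-4\alpha(n-k)\bigr)y_j^2+4\alpha\sum_{i\ne j}y_i^2$; the bound \eqref{eq:1.15++} on $\alpha$ is chosen precisely so that the $y_j^2$-coefficient is $\ge4\alpha$, which gives \eqref{eq:8.13+}. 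For \eqref{eq:8.14++} I would estimate $\sum_{i=k,\,i\ne j_0}^n|P_{ij_0}(y)|=8\alpha|y_{j_0}|\sum_{i\ne j_0}|y_i|$ by Cauchy--Schwarz together with Young's inequality, and absorb it, along with the extra term $2\alpha|y''|^2$, into the two pieces of the lower bound for $P_{j_0j_0}$ just displayed; once more \eqref{eq:1.15++} is exactly the inequality that keeps the two resulting coefficients nonnegative. The argument is entirely elementary; the only delicate point is the bookkeeping --- keeping the cutoff $\chi$ and the remainder $R$ in their correct places in the first identity, and verifying that the single constant in \eqref{eq:1.15++} is tight enough to deliver both \eqref{eq:8.13+} and \eqref{eq:8.14++} at once.
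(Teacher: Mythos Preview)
Your proof is correct and follows essentially the same route as the paper: direct differentiation of \eqref{eq:8.8} to obtain the formula for $P_{jj}$ (the paper's \eqref{eq:8.9}), summation over $j$ together with the decomposition of $K$ to get the first line of \eqref{eq:8.13}, and then the same lower bound $P_{jj}\ge\bigl(\tfrac{c_j}{2\sigma_{k-1}(\tau)}-4\alpha(n-k)\bigr)y_j^2+4\alpha\sum_{i\ne j}y_i^2$ combined with \eqref{eq:1.15++} for \eqref{eq:8.13+} and \eqref{eq:8.14++}. The only cosmetic difference is that for \eqref{eq:8.14++} the paper applies AM--GM term by term to $\bigl(\tfrac{c_{j_0}}{2\sigma_{k-1}(\tau)}-4\alpha(n-k+1)\bigr)y_{j_0}^2+2\alpha y_i^2$ rather than Cauchy--Schwarz on the whole sum, but the arithmetic and the use of \eqref{eq:1.15++} are the same.
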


\begin {proof}
From \eqref{eq:8.8}, we obtain, for fixed $k\leq j\leq n$,
\begin{equation}\label{eq:8.9}
\begin{split}
P_{jj}(y)&= \frac{1}{(n-k+1)\sigma_{k-1}(\tau)}\chi(\varepsilon^{-2}y')K(y', 0)\\
&+\frac{1}{\sigma_{k-1}(\tau)}\chi(\varepsilon^{-2}y') {\left[\frac{1}{2}\frac{\partial^2K}{\partial y_j^2}(y',0)-c_j\right]}y_j^2\\
&
+\left[\frac{c_j }{\sigma_{k-1}(\tau)}-4\alpha (n-k)\right]y_j^2
+4\alpha \sum_{i=k,i\neq j}^ny_i^2.\\
\end{split}
\end{equation}
By \eqref{eq:8.9} we have
\begin{equation*}
\sigma_{k-1}(\tau)\sum_{j=k}^n P_{jj}(y)=\chi(\varepsilon^{-2}y')\left( K(y',0)+\frac{1}{2}\sum_{i=k}^n \frac{\partial^2K}{\partial y_i^2}(y',0)y_i^2\right)+ (1-\chi(\varepsilon^{-2}y'))\sum_{j=k}^nc_jy_j^2
\end{equation*}
which proves the first equality in \eqref{eq:8.13}. Since $K(y',0)$  vanishes up to
 order greater than four, we have
$$
P_{jj}(y',0)= \frac{1}{(n-k+1)\sigma_{k-1}(\tau)}\chi(\varepsilon^{-2}y')K(y',0)=O(1)|y'|^4,
$$
then  the second equality in \eqref{eq:8.13} is true. The third equality in \eqref{eq:8.13} is obvious.

Now we return to \eqref{eq:8.9} for $P_{jj}(y)$. Since $ K(y', 0)\geq 0$,
employing \eqref{eq:8.2}, choosing small $|y'|$ and then taking $\alpha $ to satisfy \eqref{eq:1.15++}, we have
$$
P_{jj}(y)\geq \left[\frac{c_j}{2\sigma_{k-1}(\tau)}-4\alpha (n-k)\right]y_j^2+4\alpha \sum_{i=k,i\neq j}^ny_i^2
$$
which implies \eqref{eq:8.13+}. By virtue of   inequality above , for fixed $j_0$ with  $k\leq j_0\leq n,$
we obtain by Cauchy inequality,
\begin{equation*}
\begin{split}
P_{j_0j_0}(y)&\geq [\frac{c_{j_0}}{2\sigma_{k-1}(\tau)}
-4\alpha(n-k)]y_{j_0}^2+4\alpha\sum_{i=k,i\neq j_0}^ny_i^2 \\
\geq& 2\alpha |y''|^2+ \frac{1}{n-k}\sum_{i=k,i\neq j_0}^n\left[\Big[\frac{c_{j_0}}{2\sigma_{k-1}(\tau)}
-4\alpha (n-k+1)\Big]y_{j_0}^2+2\alpha y_i^2\right]\\
\geq& 2\alpha |y''|^2+ \sum_{i=k,i\neq j_0}^n\frac{2}{n-k}|y_iy_{j_0}|\sqrt{2\alpha \Big[\frac{c_{j_0}}{2\sigma_{k-1}(\tau)}
-4\alpha (n-k+1)\Big]}\\
\geq & 2\alpha |y''|^2+ \sum_{i=k,i\neq j_0}^n8\alpha |y_iy_{j_0}|=2\alpha |y''|^2+\sum_{i=k,i\neq j_0}^n|P_{ij_0}(y)|,
\end{split}
\end{equation*}
so the minor matrix $(P_{ij})_{k\leq i,j\leq n}$ is  strictly diagonally dominant and \eqref{eq:8.14++} is proved.
\end {proof}

We will construct the solution as a perturbation of the strictly convex function $\psi(y)$ in the following form,
\begin{equation}\label{eq:2.abc}
u(y)=\psi+\varepsilon^{\frac{17}{2}} w(\varepsilon^{-2}y)=\frac{1}{2}\sum_{j=1}^{k-1}\tau_jy_j^2+P(y)+\varepsilon^{\frac{17}{2}} w(\varepsilon^{-2}y),
\end{equation}
see \eqref{eq:2.abc+}, where $w(x)$ will be proved to be a smooth function later.

By a  change of variable $x=\varepsilon^{-2}y$, the Hessian matrix of $u$ defined in \eqref{eq:2.abc} is
\begin{equation}\label{2.abd}
(D^2 u)(\varepsilon^{2}x)=\mathbf{r}=\left(\sum_{l=1}^{k-1}\delta_{i}^{j}\delta_j^l\tau_l+ P_{ij}(\varepsilon^2x)+
 \varepsilon^{\frac{9}{2}} w_{ij}(x)\right).
\end{equation}
We study firstly the minor matrix $\mathbf{r}_{k-1}=(r_{ij})_{1\leq i,j \leq k-1}$ which is real-valued and symmetric,
then there is an orthogonal  $(k-1)\times (k-1)$ matrix $\widetilde{Q}$ such that
$$
\widetilde{Q}(x,\varepsilon)\, \mathbf{r}_{k-1}\, \Prefix ^{t}{\widetilde{Q}(x,\varepsilon)}=
\text{diag}(\lambda_1(x,\varepsilon),\ldots,\lambda_{k-1}(x,\varepsilon)).
$$
Let
\begin{gather*}Q=
\begin{pmatrix}
\widetilde{Q}&0\\
0&\mathbf{I}_{n-k+1,n-k+1}
\end{pmatrix},
\end{gather*}
then
\begin{gather*}
Q\, \mathbf{r}\, \Prefix^tQ=
\begin{pmatrix}
\lambda_1&0&\ldots&0&r_{1,k}&\ldots& r_{1n}\\
0&\lambda_2&\ldots &0&r_{2,k}&\ldots &r_{2n}\\
\vdots &\vdots &\vdots &\vdots &\vdots &\vdots &\vdots\\
0& 0&\ldots& \lambda_{k-1}&r_{k-1,k}&\ldots&r_{k-1,n}\\
r_{k,1}& r_{k,2}&\ldots& r_{k,k-1}&r_{k,k}&\ldots&r_{k,n}\\
\vdots&\vdots&\vdots&\vdots&\vdots&\vdots&\vdots\\
r_{n,1}&r_{n,2}&\ldots&r_{n,k-1}&r_{n,k}&\ldots&r_{n,n}
\end{pmatrix},
\end{gather*}
where all terms $r_{ij}$ in the above matrix are in the form
\begin{equation}\label{2.12}
r_{i j}(x)= P_{ij}(\varepsilon^2x)+
 \varepsilon^{\frac{9}{2}} w_{ij}(x).
\end{equation}
Noticing the existence of ${\chi(\varepsilon^{-2}y')}$ in $P(y)$, we have
$$
P_{ij}(\varepsilon^2x',\varepsilon^2x'')=O(1)\varepsilon^8|x''|^2+O(1)\varepsilon^4|x''|^4, \quad 1\leq i,j\leq k-1,
$$
applying Lemma \ref{lm:matrix} to the minor matrix ${\bf r}_{k-1},$ we obtain that
\begin{equation}\label{eq:8.22}
\lambda_i=\tau_i+O(1)\varepsilon^4,\indent \lambda_1>\lambda_2>\ldots\lambda_{k-1}>0,
\end{equation}
for $\varepsilon>0$ small enough.

\begin{lemma}\label{th:6.4}
Let $\mathbf{r}$  be defined in \eqref{2.abd}, then $S_k(\mathbf{r})=\text{\ding {172}}+\text{\ding {173}}+\text{\ding {174}}$ with
$$
\left\{
\begin{array}{l}
\text{\ding {172}}= \sigma_{k-1}(\lambda_1,\ldots,\lambda_{k-1})\sigma_1(r_{kk},\ldots,r_{nn})-
\sum_{i=k}^n\sum_{j=1}^{k-1}\sigma_{k-1,j}(\lambda_1,\ldots,\lambda_{k-1})r_{ji}^2\\
\text{\ding {173}}=\sigma_{k-2}(\lambda_1,\ldots,\lambda_{k-1})\left[\sigma_2(r_{kk},\ldots,r_{nn})-
\sum_{i=k}^n\sum_{s=k,s\neq i}^n r_{si}^2\right]+O(\sum_{m\geq k;i\geq 1}|r_{mi}|^3)\\
\text{\ding {174}}=\sum_{j=3}^{k-1}\sigma_{k-j}(\lambda_1,\ldots,\lambda_{k-1})O(\sum_{m\geq k;i\geq 1}|r_{mi}|^3)+O(\sum_{m\geq k;i\geq 1}|r_{mi}|^3),
\end{array}\right.
$$
where $r_{mi}=r_{im}=O(1)\varepsilon^4$ for $m\geq k;i\geq 1.$
\end{lemma}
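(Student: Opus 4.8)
The plan is to use the orthogonal invariance of $S_k$: since conjugation by $Q$ leaves $S_k$ unchanged,
\[
S_k(\mathbf r)=S_k\bigl(Q\,\mathbf r\,{}^{t}Q\bigr)=\sum_{S}\det\bigl((Q\,\mathbf r\,{}^{t}Q)|_{S}\bigr),
\]
the sum running over all $k$-element subsets $S\subseteq\{1,\dots,n\}$, where $(Q\,\mathbf r\,{}^{t}Q)|_{S}$ is the corresponding principal submatrix. For each such $S$ I would set $I=S\cap\{1,\dots,k-1\}$, $J=S\cap\{k,\dots,n\}$ and $p=|J|$, observing that $|I|\le k-1$ forces $p\ge 1$. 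Writing the principal submatrix in block form
\[
(Q\,\mathbf r\,{}^{t}Q)|_{S}=\begin{pmatrix}D_I&B\\ {}^{t}B&C_J\end{pmatrix},\qquad D_I=\mathrm{diag}(\lambda_l:l\in I),\quad C_J=(r_{st})_{s,t\in J},\quad B=(r_{lt})_{l\in I,\,t\in J},
\]
one has, by \eqref{eq:8.22}, $\lambda_l\ge c_0>0$ for $\varepsilon$ small, so $D_I$ is invertible with $D_I^{-1}=O(1)$, while \eqref{2.12} together with the explicit form \eqref{eq:8.8} of $P$ and the presence of the cutoff $\chi(\varepsilon^{-2}y')$ make every entry of $B$ and of $C_J$ of size $O(1)\varepsilon^4$, uniformly for $x\in\Omega$. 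The Schur complement identity then gives
\[
\det\bigl((Q\,\mathbf r\,{}^{t}Q)|_{S}\bigr)=\Bigl(\prod_{l\in I}\lambda_l\Bigr)\det\bigl(C_J-{}^{t}B\,D_I^{-1}B\bigr),\qquad {}^{t}B\,D_I^{-1}B=O(\varepsilon^8).
\]

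Next I would group the minors according to $p$. For $p=1$ necessarily $I=\{1,\dots,k-1\}$, $J=\{i\}$, and the above identity with $C_J$ a scalar (equivalently a cofactor expansion of the arrowhead matrix) evaluates the minor exactly as $\sigma_{k-1}(\lambda)\,r_{ii}-\sum_{j=1}^{k-1}\sigma_{k-1,j}(\lambda)\,r_{ji}^2$; summing over $i=k,\dots,n$ reproduces precisely the first term $\text{\ding{172}}$. For $p=2$ one has $|I|=k-2$, and since $\mathrm{adj}(C_J)$ has $O(\varepsilon^4)$ entries, $\det(C_J-{}^{t}B D_I^{-1}B)=\det C_J+O(\varepsilon^{12})=r_{i_1i_1}r_{i_2i_2}-r_{i_1i_2}^2+O(\varepsilon^{12})$; using $\sum_{|I|=k-2}\prod_{l\in I}\lambda_l=\sigma_{k-2}(\lambda)$, the total over all $p=2$ minors is $\sigma_{k-2}(\lambda)\sum_{k\le i_1<i_2\le n}(r_{i_1i_1}r_{i_2i_2}-r_{i_1i_2}^2)$ modulo $O(\sum_{m\ge k,\,i\ge1}|r_{mi}|^3)$, and rewriting the inner sums through the elementary symmetric functions of $(r_{kk},\dots,r_{nn})$ and of the off-diagonal entries yields the second term $\text{\ding{173}}$. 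For $p\ge 3$, $\det C_J$ is a sum of products of $p\ge3$ entries each $O(\varepsilon^4)$, hence $O(\varepsilon^{12})=O(\sum_{m\ge k,\,i\ge1}|r_{mi}|^3)$, and the Schur corrections are of still higher order; weighting by $\sum_{|I|=k-p}\prod_{l\in I}\lambda_l=\sigma_{k-p}(\lambda)$ for $p=3,\dots,k-1$ and keeping the lone $p=k$ contribution ($I=\emptyset$) separately produces the third term $\text{\ding{174}}$. Adding the three groups gives the asserted decomposition, and the closing bound $r_{mi}=r_{im}=O(1)\varepsilon^4$ for $m\ge k$, $i\ge1$ is exactly the entry estimate used throughout.

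The hard part is not any single idea but the bookkeeping: one must (i) verify, uniformly in $x\in\Omega$ and from the explicit form \eqref{eq:8.8} of $P$ together with the cutoff $\chi$, that every matrix entry with at least one index $\ge k$ is genuinely $O(\varepsilon^4)$ (the coupling entries $r_{ji}$ with $j\le k-1<k\le i$ are in fact even smaller); (ii) push the Schur complement expansion to the correct order and confirm that the discarded terms really are $O(\sum_{m\ge k,\,i\ge1}|r_{mi}|^3)$; and (iii) check the combinatorial identities turning $\sum_{|I|=k-p}\prod_{l\in I}\lambda_l$ into $\sigma_{k-p}(\lambda_1,\dots,\lambda_{k-1})$ and recombining the off-diagonal sums into the symmetric-function shape of $\text{\ding{172}}$, $\text{\ding{173}}$, $\text{\ding{174}}$. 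Keeping every index range and constant straight across the three cases is where the care is needed.
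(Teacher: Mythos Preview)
Your approach is essentially the same as the paper's: both use the orthogonal invariance $S_k(\mathbf r)=S_k(Q\mathbf r\,{}^tQ)$ and then sort the $k\times k$ principal minors of $Q\mathbf r\,{}^tQ$ according to how many of their indices lie in $\{k,\dots,n\}$ (your parameter $p=|J|$), with $p=1$, $p=2$, and $p\ge 3$ producing \ding{172}, \ding{173}, \ding{174} respectively. The paper's own proof is a two-sentence sketch of exactly this grouping; you have simply supplied the mechanics (the block Schur identity $\det=\bigl(\prod_{l\in I}\lambda_l\bigr)\det(C_J-{}^tBD_I^{-1}B)$ and the order counting) that the paper leaves to the reader, so there is no genuine divergence in strategy.
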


\begin{proof}
Since $S_k(\mathbf{r})$ is invariant under orthogonal transform, then $S_k(\mathbf{r})=S_k(Q \mathbf{r}\Prefix^tQ).$
By virtue of $\eqref{eq:8.22}$, we can separate $S_k(Q\mathbf{r}\Prefix^tQ)$,
which is the sum of all principal minors of order $k$ of the Hessian $(Q\mathbf{r}\Prefix^tQ)$, into three parts : $\text{\ding {172}}$ are the minors
containing $\sigma_{k-1}(\lambda_1,\ldots,\lambda_{k-1})=\prod_{i=1}^{k-1}\lambda_i$; $\text{\ding {173}}$ are the ones containing all of the  elementary symmetric  polynomials of $(k-2)-$order in $\lambda_1,\cdots,\lambda_{k-1}$; $\text{\ding {174}}$ are the ones containing all of the  elementary symmetric  polynomials of order $\leq k-3$ in $\lambda_1,\ldots,\lambda_{k-1}$.
\end{proof}

\begin{proposition}\label{prop:2.3}
We have for any $w\in C^2(\overline{\Omega})$, the function $u$ defined by \eqref{eq:2.abc} satisfy
 \begin{equation}\label{eq:8.70}
  S_k[u](y)=\tilde{K}+O(1)\varepsilon^{\frac{9}{2}},
 \end{equation}
where $O(1)$  depends on $\|w\|_{C^2}$ and $\tilde{K}$ is defined in \eqref{eq:8.14}.
\end{proposition}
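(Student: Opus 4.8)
The plan is to substitute the Hessian $\mathbf{r}$ of \eqref{2.abd} into the decomposition $S_k(\mathbf{r})=\text{\ding {172}}+\text{\ding {173}}+\text{\ding {174}}$ of Lemma \ref{th:6.4} (recall $S_k$ is invariant under the orthogonal change used there, so $S_k[u](y)=S_k(\mathbf{r})$ with $y=\varepsilon^2x$) and to track powers of $\varepsilon$ on $\Omega$; the key point is that the only contribution exceeding the order $\varepsilon^{9/2}$ is the ``principal part'' $\sigma_{k-1}(\lambda_1,\ldots,\lambda_{k-1})\,\sigma_1(r_{kk},\ldots,r_{nn})$ of $\text{\ding {172}}$, which reproduces $\tilde{K}$ via the identity \eqref{eq:8.13}. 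First I would record the size estimates for the entries $r_{ij}(x)=P_{ij}(\varepsilon^2x)+\varepsilon^{9/2}w_{ij}(x)$ of $\mathbf{r}$, see \eqref{2.12}, from \eqref{eq:8.8}, \eqref{eq:8.9}, \eqref{eq:8.13} and \eqref{eq:8.22}: for $k\le j\le n$, formula \eqref{eq:8.9} together with \eqref{eq:8.2} and the boundedness of $x$ gives $P_{jj}(\varepsilon^2x)=O(1)\varepsilon^4$, while $\varepsilon^{9/2}w_{jj}=O(1)\varepsilon^{9/2}$ with constant controlled by $\|w\|_{C^2}$, so $r_{jj}=O(1)\varepsilon^4$; for $k\le i\ne j\le n$, \eqref{eq:8.13} gives $P_{ij}(\varepsilon^2x)=8\alpha\varepsilon^4x_ix_j$, hence $r_{ij}=O(1)\varepsilon^4$; and for $1\le j\le k-1<i\le n$ the cutoff $\chi(\varepsilon^{-2}y')$ and the vanishing of $K(y',0)$ to order $>4$ force $P_{ij}(\varepsilon^2x)$ to be $O(1)\varepsilon^6$ (or smaller), so $r_{ij}=O(1)\varepsilon^{9/2}$. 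In particular every off-diagonal or lower-right entry is $O(1)\varepsilon^4$, consistent with Lemma \ref{th:6.4}, and $\sum_{j=k}^nP_{jj}(\varepsilon^2x)=O(1)\varepsilon^4$; moreover, by \eqref{eq:8.22}, $\sigma_{k-1}(\lambda_1,\ldots,\lambda_{k-1})=\sigma_{k-1}(\tau)+O(1)\varepsilon^4=O(1)$ and $\sigma_{k-1,j},\sigma_{k-2},\ldots$ at $(\lambda_1,\ldots,\lambda_{k-1})$ are $O(1)$.

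Next I would isolate the principal part of $\text{\ding {172}}$. Writing $\sigma_1(r_{kk},\ldots,r_{nn})=\sum_{j=k}^nP_{jj}(\varepsilon^2x)+\varepsilon^{9/2}\sum_{j=k}^nw_{jj}$ and $\sigma_{k-1}(\lambda)=\sigma_{k-1}(\tau)+\bigl(\sigma_{k-1}(\lambda)-\sigma_{k-1}(\tau)\bigr)$, multiplying out, and using the first line of \eqref{eq:8.13} with $y=\varepsilon^2x$ (so $\chi(\varepsilon^{-2}y')=\chi(x')$ and $R(\varepsilon^2x)=O(1)\varepsilon^6$ as established in Section \ref{section2}), one gets
\[
\sigma_{k-1}(\lambda)\,\sigma_1(r_{kk},\ldots,r_{nn})=\tilde{K}-\chi(x')R(\varepsilon^2x)+O(1)\varepsilon^8+O(1)\varepsilon^{9/2}=\tilde{K}+O(1)\varepsilon^{9/2},
\]
where the $O(1)\varepsilon^8$ is $\bigl(\sigma_{k-1}(\lambda)-\sigma_{k-1}(\tau)\bigr)\sum_{j\ge k}P_{jj}(\varepsilon^2x)$, the $O(1)\varepsilon^{9/2}$ is $\sigma_{k-1}(\lambda)\varepsilon^{9/2}\sum_{j\ge k}w_{jj}$ with constant depending on $\|w\|_{C^2}$, and I used $\varepsilon<1$ so that $\varepsilon^6,\varepsilon^8=O(\varepsilon^{9/2})$. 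The remaining piece $-\sum_{i=k}^n\sum_{j=1}^{k-1}\sigma_{k-1,j}(\lambda)r_{ji}^2$ of $\text{\ding {172}}$ is $O(1)\varepsilon^8$, since $r_{ji}=O(1)\varepsilon^{9/2}$ for $j\le k-1<i$.

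Finally I would bound $\text{\ding {173}}$ and $\text{\ding {174}}$ with the same estimates: $\sigma_2(r_{kk},\ldots,r_{nn})=O(1)\varepsilon^8$, $\sum_{i=k}^n\sum_{s=k,s\ne i}^nr_{si}^2=O(1)\varepsilon^8$ and $\sum_{m\ge k;\,i\ge 1}|r_{mi}|^3=O(1)\varepsilon^{12}$, whence $\text{\ding {173}}=O(1)\varepsilon^8$ and $\text{\ding {174}}=O(1)\varepsilon^{12}$. Summing $\text{\ding {172}}+\text{\ding {173}}+\text{\ding {174}}$ and absorbing $\varepsilon^8,\varepsilon^{12}$ into $\varepsilon^{9/2}$ (again $\varepsilon<1$) yields $S_k[u](y)=\tilde{K}+O(1)\varepsilon^{9/2}$ with $O(1)$ depending only on $\|w\|_{C^2}$ (and on the fixed data $K,\tau,\alpha,n,k$), which is \eqref{eq:8.70}. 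I expect the only real work to be this bookkeeping — in particular checking that the $\varepsilon^{9/2}$ error is genuinely the dominant one and is linear in the $C^2$-norm of $w$; this rests entirely on \eqref{eq:8.13}, which was engineered so that $\sigma_{k-1}(\tau)\sum_{j\ge k}P_{jj}$ equals $\tilde{K}$ up to the harmless remainder $\chi R=O(1)\varepsilon^6$.
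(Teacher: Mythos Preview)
Your proposal is correct and follows essentially the same route as the paper's own proof: both insert \eqref{2.abd} into the decomposition of Lemma~\ref{th:6.4}, identify the principal contribution $\sigma_{k-1}(\lambda)\sigma_1(r_{kk},\ldots,r_{nn})$ with $\tilde K$ via \eqref{eq:8.13}, and bound \text{\ding{173}}$=O(\varepsilon^8)$, \text{\ding{174}}$=O(\varepsilon^{12})$. Your entrywise estimates (in particular $r_{ji}=O(\varepsilon^{9/2})$ for $j\le k-1<i$) are slightly sharper than the paper's $O(\varepsilon^4)$, but this is immaterial since both are absorbed into $O(\varepsilon^{9/2})$.
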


\begin {proof}
By \eqref{2.12} and \eqref{eq:8.22}, we have
$$
\sigma_{k-1}(\lambda_1,\ldots,\lambda_{k-1})=\sigma_{k-1}(\tau)+O(1)\varepsilon^4.
$$
Noticing that $r_{ij}={P_{ij}(\varepsilon^2  x)+\varepsilon^{\frac{9}{2}}w_{ij}(x)}$ for $i\geq k$ or $j\geq k$, we obtain
$S_k(\mathbf{r})=\text{\ding {172}}+\text{\ding {173}}+\text{\ding {174}}$ with
 \begin{equation*}\left\{
\begin{array}{ll}
\text{\ding {172}}&=[\sigma_{k-1}(\tau)+\varepsilon^4O(1)] \sum_{l=k}^nr_{ll}(\varepsilon^2x)-
\sum_{l=k}^n\sum_{j=1}^{k-1}\sigma_{k-1,j}(\lambda_1,\ldots,\lambda_{k-1})r_{jl}^2(\varepsilon^2x)\\
&=\sigma_{k-1}(\tau)\sum_{l=k}^nr_{ll}(\varepsilon^2x)+\varepsilon^{8}O(1)=\sigma_{k-1}(\tau)\sum_{l=k}^nP_{ll}(\varepsilon^2x)+\varepsilon^{\frac{9}{2}}O(1)\\
\text{\ding {173}}&=\sigma_{k-2}(\lambda_1,\ldots,\lambda_{k-1})\sum_{ k\leq l_1,l_2\leq n}
  (r_{l_1l_1}(\varepsilon^2x)r_{l_2l_2}(\varepsilon^2x)-r_{l_1l_2}^2(\varepsilon^2x))\\
 & \ \ \ \ \ +{O(r_{il}^3(\varepsilon^2x))}=\varepsilon^8O(1)\\
\text{\ding {174}}&={O(|r_{il}^3(\varepsilon^2x)|)}=\varepsilon^{12}O(1).
\end{array}\right.
\end{equation*}
Therefore,
$$
S_{k}(\mathbf{r})=\sigma_{k-1}(\tau) \sum_{j=k}^nP_{jj}(\varepsilon^2x)+O(1)\varepsilon^{\frac{9}{2}},
$$
from which, using  \eqref{eq:8.13} and recalling $y=\varepsilon^2x$, we obtain \eqref{eq:8.70}.
\end{proof}


\section{Linearized degenerate elliptic operators} \label{section3}

By the construction of Section \ref{section2} and Proposition \ref{prop:2.3}, we have
\begin{equation}\label{eq:8.18}
G(w)=\frac{1}{\varepsilon^{\frac{9}{2}}}\left\{S_k(u)- \tilde{K}\right\}=\frac{1}{\varepsilon^{\frac{9}{2}}}\left\{O(1)\varepsilon^{\frac{9}{2}}\right\}=O(1),
\end{equation}
So that for any $w\in C^2(\bar\Omega)$,
\begin{equation}\label{eq:2.11a}
\theta(w)=\sup_{x\in \Omega}|G(w)(x)|<+\infty
\end{equation}
uniformly with respect to $\varepsilon$, and then \eqref{eq:8.18} is well-defined for $0<\varepsilon\le \varepsilon_0<<1$.

The linearized operator of $G$ at $w$ is
\begin{equation*}
L_{G}(w)=\sum_{i,j=1}^{n}S_{k}^{ij}(w)\partial_{i}\partial_{j},
\end{equation*}
where
\begin{equation}\label{eq:4.4}
S_{k}^{ij}(w)=\frac{\partial S_{k}}{\partial r_{ij}}(w)=\frac{\partial S_{k}(\mathbf{r})}{\partial r_{ij}}(w).	
\end{equation}
Since  the matrix $(S^{ij}_{k})(w)$ and $\mathbf{r}$ is simultaneously diagonalizable, see \cite{W},  that is,  for any
smooth function $w$, we can find  out an orthogonal matrix $T(x,\varepsilon)$ satisfying
\begin{equation}\label{eq:4.4++}
\left\{
\begin{array}{l}
T(x,\varepsilon)(S_{k}^{ij})\ \ \Prefix ^{t}{T(x,\varepsilon)}=
\textup{diag}\left[\frac{\partial \sigma_{k}(\lambda(x, \varepsilon))}{\partial
\lambda_{1}}, \frac{\partial\sigma_{k}(\lambda(x, \varepsilon))}{\partial
\lambda_{2}},\ldots,\frac{\partial\sigma_{k}(\lambda(x, \varepsilon))} {\partial
\lambda_{n}}\right]\\
T(x,\varepsilon)\mathbf{r}\ \Prefix ^{t}{T(x,\varepsilon)}=\textup{diag}\left[\lambda_{1}(x,\varepsilon),
\lambda_{2}(x,\varepsilon),\ldots,\lambda_{n}(x,\varepsilon)\right],
\end{array}\right.
\end{equation}
where $T_i(x, \varepsilon)$ is the corresponding unit eigenvectors of $\lambda_i,  i=1, 2, \ldots, n$.

The linearized operator $L_G(w)$ is not guaranteed to be degenerately elliptic, because
$(\lambda_1(x,\varepsilon),\lambda_2(x,\varepsilon),\ldots,\lambda_n(x,\varepsilon))$, as a result of the perturbation  by $\varepsilon^{\frac{9}{2}}w_{ij}(x)$, may be not in $\bar\Gamma_k.$ But we have

\begin{proposition}\label{lm:ellip}
Assume that $\|w\|_{C^{2}(\Omega)}\leq 1$ , then the second order differential operators
\begin{equation*}
L_{G}(w)+\theta\Delta
\end{equation*}
is a degenerate elliptic operator if $\varepsilon>0$ is sufficiently
small, where $\theta$ is defined in \eqref{eq:2.11a}.
\end{proposition}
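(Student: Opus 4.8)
The plan is to show that the eigenvalues $\lambda_i(x,\varepsilon)$ of $\mathbf{r}$ are, up to an $O(\varepsilon^4)$ error, close to a configuration lying in $\bar\Gamma_k(n)$, and then to argue that adding $\theta\Delta$ — which shifts every eigenvalue of the symbol upward by $\theta>0$ — compensates for any negative excursion of size $O(\varepsilon^4)$ once $\theta$ is bounded below by a positive constant independent of $\varepsilon$. Concretely, the symbol of $L_G(w)+\theta\Delta$ at a covector $\xi$ is, after the orthogonal diagonalization \eqref{eq:4.4++}, the quadratic form $\sum_{l=1}^n\bigl(\frac{\partial\sigma_k}{\partial\lambda_l}(\lambda(x,\varepsilon))+\theta\bigr)\eta_l^2$ where $\eta=T(x,\varepsilon)\xi$. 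So degenerate ellipticity is equivalent to
$$
\frac{\partial\sigma_k}{\partial\lambda_l}(\lambda(x,\varepsilon))+\theta\ \ge\ 0,\qquad 1\le l\le n,
$$
for all $x\in\Omega$. This is what I would establish.

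First I would analyze the eigenvalues. Splitting $\mathbf{r}$ as in Section~\ref{section2}: the block $\mathbf{r}_{k-1}$ has eigenvalues $\lambda_i=\tau_i+O(\varepsilon^4)$ with $\tau_1>\cdots>\tau_{k-1}>0$ by \eqref{eq:8.22}; the lower-right block $(r_{ij})_{k\le i,j\le n}=(P_{ij}(\varepsilon^2x)+\varepsilon^{9/2}w_{ij})_{k\le i,j\le n}$ is, by \eqref{eq:8.13+}--\eqref{eq:8.14++}, strictly diagonally dominant with diagonal entries $\ge 4\alpha|y''|^2$, hence positive semidefinite, and in fact every eigenvalue of this block is $\ge 2\alpha|y''|^2 \ge 0$ (Gershgorin); the off-block entries $r_{mi}$ with $m\ge k$, $i\le k-1$ are $O(\varepsilon^4)$. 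Applying the perturbation estimate of Section~\ref{section7} (Lemma~\ref{lm:matrix}) to the full matrix $\mathbf{r}$ viewed as the block-diagonal matrix plus an $O(\varepsilon^4)$ perturbation: the top $k-1$ eigenvalues of $\mathbf{r}$ are $\tau_i+O(\varepsilon^4)$ and separated from one another and from $0$, while the remaining $n-k+1$ eigenvalues $\lambda_k,\dots,\lambda_n$ differ from those of the lower-right block by $O(\varepsilon^4)$, hence $\lambda_l\ge -C\varepsilon^4$ for $k\le l\le n$. Thus the vector $\lambda(x,\varepsilon)$ is within $C\varepsilon^4$ (in sup norm) of the set $\{\mu:\mu_1>\cdots>\mu_{k-1}>0,\ \mu_k,\dots,\mu_n\ge 0\}\subset\bar\Gamma_k(n)$.

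Next I would lower-bound $\theta=\theta(w)$. From \eqref{eq:8.70} and the construction, $S_k[u]-\tilde K=O(1)\varepsilon^{9/2}$ with $O(1)$ depending only on $\|w\|_{C^2}\le 1$, so a priori $\theta=O(1)$; but we need $\theta\ge c_0>0$ with $c_0$ independent of $\varepsilon$. This follows because $G(w)$ as a function of $x$ cannot vanish identically — indeed $\psi$ is only weakly convex and the cutoff in $\tilde K$ forces a nonzero discrepancy near $|x'|\sim\pi/2$; more robustly one simply replaces $\theta$ by $\max(\theta(w),c_0)$ for a small fixed $c_0>0$, which only helps the ellipticity and is the convention actually needed for the iteration. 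I would phrase the proposition with that convention (or note it), so that $\theta\ge c_0>0$.

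Finally, combining: by $\sigma_k\in C^1$ and the bound $\frac{\partial\sigma_k}{\partial\lambda_l}(\mu)\ge 0$ for $\mu$ in the closed Gårding cone together with its boundary (here using that $\partial\sigma_k/\partial\lambda_l=\sigma_{k-1}(\mu|\lambda_l\!=\!0)\ge\sigma_{k-1;\text{relevant}}$, which is $\ge 0$ on $\bar\Gamma_{k-1}\supset\bar\Gamma_k$), continuity gives
$$
\frac{\partial\sigma_k}{\partial\lambda_l}(\lambda(x,\varepsilon))\ \ge\ -\,\omega(C\varepsilon^4)
$$
for a modulus of continuity $\omega$; choosing $\varepsilon_0$ so small that $\omega(C\varepsilon_0^4)<c_0\le\theta$ yields $\frac{\partial\sigma_k}{\partial\lambda_l}(\lambda(x,\varepsilon))+\theta\ge 0$ for all $l$ and all $x\in\Omega$, which is the claimed degenerate ellipticity.

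The main obstacle I expect is the eigenvalue perturbation step: the lower-right block of $\mathbf{r}$ has eigenvalues that degenerate to $0$ as $|y''|\to 0$, so near $x''=0$ the spectral gap for the small eigenvalues collapses and one cannot separately track individual eigenvectors; the saving point is that one does not need individual control there — only the sign bound $\lambda_l\ge -C\varepsilon^4$, which follows from Gershgorin/Weyl-type estimates applied to the whole matrix rather than to a single eigenvalue, and the appendix Lemma~\ref{lm:matrix} is precisely designed to deliver the $O(\varepsilon^4)$-stability of the spectrum (not the eigenvectors) under this kind of structured perturbation. Making that uniform in $x\in\Omega$ (including near the degeneracy locus and near the support boundary of the cutoff) is the delicate bookkeeping.
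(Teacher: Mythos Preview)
Your argument has a genuine gap at the step where you lower-bound $\theta$. You claim $\theta\ge c_0>0$ with $c_0$ independent of $\varepsilon$, or else propose to replace $\theta$ by $\max(\theta,c_0)$. Neither works. The quantity $\theta=\theta(w)=\sup_\Omega|G(w)|$ is \emph{not} bounded below by a positive constant uniformly in $w$: in the Nash--Moser scheme one has $\theta_m=\|G(w_m)\|_{L^\infty}\to 0$ (this is precisely how convergence to a solution is detected), and for $w$ equal to the actual solution one has $\theta=0$. Redefining $\theta$ as $\max(\theta,c_0)$ would break the quadratic-error estimate in the iteration (the term $\theta_m\Delta S_m\rho_m$ must tend to zero). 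So the proposition has to be proved with $\theta$ exactly as defined, and your continuity argument, which only gives $\sigma_{k-1,l}(\lambda)\ge -C\varepsilon^4$, cannot close without a matching lower bound on $\theta$.

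The paper's proof supplies the missing link by exploiting the \emph{definition} of $\theta$ rather than seeking a uniform lower bound. At any point $x$ where $\sigma_k(\lambda(x,\varepsilon))<0$, the nonnegativity of $\tilde K$ gives
\[
\theta\;\ge\;|G(w)(x)|\;=\;\varepsilon^{-9/2}\bigl(\tilde K-\sigma_k(\lambda)\bigr)\;\ge\;-\,\varepsilon^{-9/2}\sigma_k(\lambda),
\]
a pointwise-adapted lower bound that scales with the negative excursion of $\sigma_k$. One then writes $\sigma_k(\lambda)=\lambda_i\,\sigma_{k-1,i}(\lambda)+\sigma_{k,i}(\lambda)$ and uses Newton's inequality $\sigma_{k,i}\sigma_{k-2,i}\le C[\sigma_{k-1,i}]^2$ (together with $\sigma_{k-2,i}(\lambda)=\prod_{j\ne i}\tau_j+O(\varepsilon)>0$ and $\sigma_{k-1,i}(\lambda)=O(\varepsilon)$) to control $\sigma_{k,i}$ in terms of $|\sigma_{k-1,i}|$. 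This yields $\theta+\sigma_{k-1,i}(\lambda)\ge 0$ for $1\le i\le k-1$; for $i\ge k$ one has $\sigma_{k-1,i}(\lambda)=\sigma_{k-1}(\tau)+O(\varepsilon)>0$ directly. The key idea you are missing is that $\theta$ and the potential negativity of $\sigma_{k-1,i}$ are coupled through $\sigma_k$ and the equation itself, not decoupled as your argument treats them.
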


\begin{proof}
By the definition of degenerate elliptic operator, we have to prove
$$A=\theta|\xi|^{2}+\sum_{i,j=1}^{n}S^{ij}_{k}\xi_{i}\xi_{j}\geq 0,\ \ \mbox{for any}\ \ \xi\in \mathbb{R}^{n}
$$
which is equivalent to prove,
$$
A=\theta|\tilde\xi|^{2}+((S^{ij}_{k})\ \Prefix ^{t}T\tilde\xi,
 \Prefix ^{t} T\tilde\xi)=\theta|\tilde\xi|^{2}+
(T(S^{ij}_{k})\ \Prefix ^{t}T\tilde\xi, \tilde\xi)\geq 0,\ \ \mbox{for any}\ \ \tilde\xi\in \mathbb{R}^{n},
$$
where $T$ is the orthogonal matrix in \eqref{eq:4.4++}, then
\begin{equation}\label{eq:4.12}
A=\theta|\tilde\xi|^{2}+\sum_{i=1}^{k-1}\sigma_{k-1,i}(\lambda(x, \varepsilon))\tilde\xi_{i}^{2}
+\sum_{i=k}^{n}\sigma_{k-1,i}(\lambda(x, \varepsilon))\tilde\xi_{i}^{2}
\end{equation}
 Since for small $\varepsilon$, we have  $\sigma_{k-1,i}(\lambda(x, \varepsilon))=\prod_{j=1}^{k-1}\tau_j+O(\varepsilon)>0, \,\, k\leq i\leq n$, we only need to prove
 \begin{equation}\label{eq:4.13}
  \theta+\sigma_{k-1,i}(\lambda(x, \varepsilon))\geq 0, \indent 1\leq i\leq k-1.
 \end{equation}
If $\sigma_k(\lambda(x, \varepsilon))\geq 0,$ together with the fact
$\sigma_{j}(\lambda(x, \varepsilon))=\sigma_j(\tau_1,\tau_2,\ldots,\tau_{k-1})+O(\varepsilon)>0$ for $1\leq j\leq k-1$, then $\lambda(x, \varepsilon)\in \bar{\Gamma}_k$
which  yields
$$
\sigma_{k-1,i}(\lambda(x, \varepsilon))\geq 0, \indent 1\leq i\leq n.
$$
It is left to consider the case $\sigma_k(\lambda(x, \varepsilon))<0,$ in which case,  since $\tilde{K}\geq 0,$
\begin{equation*}
\begin{split}
\theta&=\theta(w)=\max_{x\in \Omega}|G(w)|\\
&=\max_{x\in \Omega}\frac{1}{\varepsilon}|\sigma_k(\lambda(x, \varepsilon))-\tilde{K}|\geq -\frac{1}{\varepsilon}\sigma_k(\lambda(x, \varepsilon)).
\end{split}
\end{equation*}
 Now we prove \eqref{eq:4.13} for $i=1$ with $\sigma_{k-1,1}(\lambda)< 0$, the other cases
can be proved similarly. By the definition of $G(w)$ and $\sigma_k(\lambda)=\lambda_1\sigma_{k-1,1}(\lambda)+\sigma_{k,1}(\lambda),$
\begin{equation}\label{eq:4.15}
\begin{split}
\theta&+2\sigma_{k-1,1}(\lambda)=\theta+2\frac{\sigma_k(\lambda)-\sigma_{k,1}(\lambda)}{\lambda_1}\\
\geq& -\frac{1}{\varepsilon}\sigma_k(\lambda)
+2\frac{\sigma_k(\lambda)-\sigma_{k,1}(\lambda)}{\lambda_1}=(-\frac{1}{\varepsilon}+\frac{2}{\lambda_1})\sigma_k(\lambda)-\frac{2}{\lambda_1}
\sigma_{k,1}(\lambda).
\end{split}
\end{equation}

Under the assumption $\sigma_{k-1,1}(\lambda)<0$ and $\sigma_k(\lambda)<0$, we will distinguish two cases.

\noindent{\bf{Case 1.}} If $\sigma_{k,1}(\lambda)\leq0$, we have by \eqref{eq:4.15}
$$
\theta+\sigma_{k-1,1}> \theta+2\sigma_{k-1,1}\geq (-\frac{1}{\varepsilon}+\frac{2}{\lambda_1})\sigma_k(\lambda)-\frac{2}{\lambda_1}
\sigma_{k,1}(\lambda)\geq (-\frac{1}{\varepsilon}+\frac{2}{\lambda_1})\sigma_k(\lambda)>0
$$
provided $\varepsilon$ is small enough.

\noindent
{\bf{Case 2.}} Next it is left to consider the case in which
 $$
\sigma_{k}(\lambda)<0,\,\,\,\sigma_{k,1}(\lambda)>0 ,\,\,\,\sigma_{k-1,1}(\lambda)<0
$$
hold simultaneously. Using Newton's inequalities for $(n-1)$-tuple
vectors
$$
\sigma_{k,1}(\lambda)\sigma_{k-2,1}(\lambda)\leq \frac{(k-1)(n-k)}{k(n-k+1)}[\sigma_{k-1,1}(\lambda)]^2, \indent \lambda\in \mathbb{R}^n
$$
and the fact
$$
\sigma_{k-2,1}(\lambda)=\prod_{i=2}^{k-1}\tau_i+O(\varepsilon)>0,\indent \sigma_{k-1,1}(\lambda)=O(\varepsilon),
$$
we obtain
$$
0< \sigma_{k,1}\leq \frac{(k-1)(n-k)}{k(n-k+1)}\frac{[\sigma_{k-1,1}(\lambda)]^2}{\sigma_{k-2,1}(\lambda)}\leq |O(\varepsilon)\sigma_{k-1,1}(\lambda)|.
$$
Back to \eqref{eq:4.15}, using $\sigma_{k}(\lambda)<0$, then for $\varepsilon>0$ small,   we have
$$
\theta+2\sigma_{k-1,1}\geq \left(-\frac{1}{\varepsilon}+\frac{2}{\lambda_1}\right)\sigma_k(\lambda)-\frac{2}{\lambda_1}
\sigma_{k,1}(\lambda)\geq -\frac{2}{\lambda_1}
\sigma_{k,1}(\lambda)= -|O(\varepsilon)\sigma_{k-1,1}(\lambda)|,
$$
which yields
$$
\theta+\sigma_{k-1,1}\geq \theta+2\sigma_{k-1,1}+|O(\varepsilon)\sigma_{k-1,1}(\lambda)|\geq0
$$
provided $\varepsilon>0$ small enough.  Proof is done.
\end{proof}

Equality \eqref{eq:4.12} shows that the operator $L_{G}(w)+\theta\Delta$ may be degenerate elliptic in the directions of  $(\tilde{\xi}_1,\cdots,\tilde{\xi}_{k-1})$ and is uniformly elliptic in the directions of  $(\tilde{\xi}_k,\cdots,\tilde{\xi}_{n})$ after the orthogonal transform $T(x,\varepsilon)$ which is a perturbation of unit matrix, so we can impose the Dirichlet boundary condition on $\partial Q_{\delta_0}$ (the $x''$ direction), but we can't do that  on $\partial Q_{\pi}$ (the $x'$ direction). Instead of treating a Dirichlet boundary value problem, we shall  prove the  existence, uniqueness and \`a priori estimates of  the solution, which is periodic with respect to $x',$  to the  degenerately  linearized elliptic equation
\begin{equation}\label{eq:4.5}\left\{
\begin{array}{l}
L_{G}(w)\rho+\theta\Delta\rho=g,  \mbox{in} \ \ \Omega;\\
\rho(x', x'')\,\,  \mbox{is periodic for}\,\,  x'\in Q_{\pi}\,\,\mbox{and}\,\,   \rho(x', x'')=0 \ \ \mbox{for} \,\, x''\in\partial Q_{\delta_0},
\end{array}\right.
\end{equation}
in some suitable Hilbert space  defined below, this idea is inspired by Hong and Zuily \cite{HZ} where they consider the case  $k=n$.  We introduce the space $\mathbf{H}^{s}(\Omega)$ ($s$ is an  integer), which is the completion of the space of trigonometrical
polynomials
$$
\rho(x)=\sum_{\ell=(l_1,l_2,\ldots,l_{k-1})\in \mathds{Z}^{k-1}}\alpha_{\ell}(x'')e^{\sqrt{-1}\sum_{j=1}^{k-1}l_jx_j}
$$
with the (complex-valued) coefficients $\alpha_{\ell}$  subject to the condition  $\overline{\alpha_{\ell}}(x'')=\alpha_{-\ell}(x'')\in C^\infty(Q_{\delta_0})$, with respect  to the norm
$$
\|\rho\|_{s}^{2}=\sum_{t+j\leq s}\sum_{\ell\in \mathds{Z}^{k-1}}(1+\sum_{i=1}^{k-1}l_i^2)^t\|\alpha_{\ell}\|_{H^j(Q_{\delta_0})}^{2},
$$
where $H^j(Q_{\delta_0})$ is the usual Sobolev space.   We define $\mathbf{H}^{s}_0(\Omega)$ in the same way by taking $\alpha_{\ell} \in C^\infty_0(Q_{\delta_0})$. We will prove, in the next section, the following Theorem.

\begin{theorem}\label{th:linearnore}
Let  $w$ be smooth and $\|w\|_{C^{[\frac{n}{2}]+3+l_0}(\Omega)}\leq 1$ with nonnegative integer
$l_0.$  Then for any $ s_{0}\in \mathbb{N}\cup\{0\}$,  one can find a constant
$\varepsilon(s_{0})$ such that the equation \eqref{eq:4.5} possesses a  solution $\rho\in \mathbf{H}^s_0(\Omega)$ provided that $g\in {H}^{s}(\Omega)$, $0\leq s\leq s_{0}$ and $0<\varepsilon\leq \varepsilon(s_{0})$.  If $s\geq 1,$ the solution is unique .   Moreover,
\begin{equation}\label{eq:5.38}
\left\{\begin{array}{l}
\|\rho\|_{s}\leq C_{s}(\|g\|_{s}+\sum_{i,j=1}^n\|W_{ij}\|_{s+2}\|\rho\|_{L^{\infty}}),\quad \mbox{if}\,\, s>\left[\frac{n}{2}\right]+1+l_0;\\
\|\rho\|_{s}\leq C_{s}\|g\|_{s},\,\quad  \mbox{if}\,\, s\le \left[\frac{n}{2}\right]+1+l_0
\end{array}\right.
\end{equation}
holds for some constants $C_{s}$ independent of $w$ and $\varepsilon$. Here
$$
W_{ij}(x)=P_{y_iy_j}(\varepsilon^2x)+
 \varepsilon^{\frac{9}{2}} w_{y_iy_j}(x).
$$
\end{theorem}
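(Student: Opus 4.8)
The plan is to prove the \`a priori bound \eqref{eq:5.38} first, for a sufficiently smooth $x'$-periodic solution $\rho$ of \eqref{eq:4.5}, and then to manufacture such a solution by a regularization argument; uniqueness for $s\ge 1$ falls out of the same energy identity. The starting point is to put the principal part in divergence form: since $\mathbf r(w)$ is the $x$-Hessian of $\tilde u(x)=\frac12\sum_i\tau_i x_i^2+\varepsilon^{-4}P(\varepsilon^2 x)+\varepsilon^{\frac92}w(x)$, the matrix $(S^{ij}_k(w))$ is a Newton tensor of $D^2\tilde u$ and hence $\sum_j\partial_j S^{ij}_k(w)=0$ (and in any case $\partial_x S^{ij}_k(w)=O(\varepsilon^4)$ by the scaling), so for $\rho$ periodic in $x'$ with $\rho|_{\partial Q_{\delta_0}}=0$ one has $\int_\Omega\bigl(L_G(w)\rho+\theta\Delta\rho\bigr)\rho\,dx=-B(\rho,\rho)$ with $B(\rho,v)=\int_\Omega\sum_{i,j}(S^{ij}_k(w)+\theta\delta^{ij})\partial_i\rho\,\partial_j v\,dx$, the boundary contributions dropping by the periodicity in $x'$ and the Dirichlet condition in $x''$. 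Diagonalizing as in \eqref{eq:4.4++} and using \eqref{eq:4.12}, together with the fact (from the perturbation estimates of Section \ref{section7}) that the orthogonal matrix $T(x,\varepsilon)$ is $O(\varepsilon^4)$-close to one respecting the $x'/x''$ splitting, one obtains the decisive lower bound $B(\rho,\rho)\ge c_0\|\partial_{x''}\rho\|_{L^2}^2-C\varepsilon^4\|\partial_{x'}\rho\|_{L^2}^2$, where the uniform $x''$-ellipticity comes from $\sigma_{k-1,i}(\lambda)=\sigma_{k-1}(\tau)+O(\varepsilon)>0$ for $i\ge k$, while the $x'$-block only carries the $O(\varepsilon^4)$ quantities $\sigma_{k-1,i}(\lambda)$, $i\le k-1$.

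Testing the equation against $\rho$ and invoking the Poincar\'e inequality on $Q_{\delta_0}$ (legitimate because $\rho|_{\partial Q_{\delta_0}}=0$), $\|\rho\|_{L^2}\le C\|\partial_{x''}\rho\|_{L^2}$, yields $\|\rho\|_0\le C\|g\|_0$ once $\varepsilon$ is small. For higher norms I would commute $\partial^\alpha$ through the equation, pair $(L_G(w)+\theta\Delta)\partial^\alpha\rho=\partial^\alpha g-[\partial^\alpha,L_G(w)]\rho$ with $\partial^\alpha\rho$, integrate by parts once more, and estimate the commutator. The key structural input is that, by the scaling $y=\varepsilon^2 x$ and the explicit formula \eqref{eq:8.8} (and Proposition \ref{prop:2.3}), the non-constant part of every coefficient $S^{ij}_k(w)+\theta\delta^{ij}$ — together with all of its derivatives up to order $[\frac n2]+1+l_0$ — is $O(\varepsilon^4)$, the only large constant part living on the $x''$-diagonal; consequently each commutator term carries either a factor $O(\varepsilon^4)$ (for the derivatives of $w$ still bounded by $\|w\|_{C^{[n/2]+3+l_0}}\le 1$) or, when more than $[\frac n2]+1+l_0$ derivatives fall on the coefficients, a factor $\|W_{ij}\|_{s+2}$ multiplying a low Sobolev norm of $\rho$. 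Combining these with the Moser product inequalities and the Sobolev embedding into $L^\infty$ produces exactly the two alternatives in \eqref{eq:5.38}. Uniqueness for $s\ge 1$ is then immediate: $(L_G(w)+\theta\Delta)\rho=0$ forces $B(\rho,\rho)=0$, hence $\partial_{x''}\rho\equiv 0$, hence $\rho\equiv 0$ by Poincar\'e.

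For existence I would introduce the auxiliary uniformly elliptic operator $L_G(w)+\theta\Delta+\delta\Delta$: for each fixed $\delta>0$ its coefficient matrix is $\ge\delta\,\mathrm{Id}$ and periodic in $x'$, so the bilinear form $B_\delta(\rho,v)=B(\rho,v)+\delta\int_\Omega\nabla\rho\cdot\nabla v$ is bounded and (using Poincar\'e in $x''$ to control $\|\rho\|_{L^2}$) coercive on $\mathbf H^1_0(\Omega)$; the Lax--Milgram theorem then produces a solution $\rho_\delta\in\mathbf H^1_0(\Omega)$ of $(L_G(w)+\theta\Delta+\delta\Delta)\rho_\delta=g$, which is smooth by difference-quotient estimates in the periodic variable $x'$ and interior/boundary elliptic regularity in $x''$. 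Since the extra term $\delta\Delta$ contributes only the nonnegative quantity $\delta\|\nabla\partial^\alpha\rho_\delta\|^2$ to the energy identities of the previous paragraph, the bounds \eqref{eq:5.38} hold for $\rho_\delta$ with constants independent of $\delta$; a weak limit of $\rho_\delta$ in $\mathbf H^s_0(\Omega)$ as $\delta\to 0$ is then the desired solution of \eqref{eq:4.5}, and it inherits \eqref{eq:5.38}.

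The main obstacle is concentrated in the second paragraph, in the control of the top-order \emph{pure} $x'$-derivatives of $\rho$. Because $\theta+\sigma_{k-1,i}(\lambda)$ may vanish for $i\le k-1$, the operator $L_G(w)+\theta\Delta$ provides no ellipticity in those directions, so the energy identity only returns $\|\partial_{x''}(\cdot)\|$, while the commutators and the mismatch between $T(x,\varepsilon)$ and the block splitting feed back terms involving $\|\partial_{x'}(\cdot)\|$ at one order higher. The role of the restriction $\varepsilon\le\varepsilon(s_0)$, of the smallness of all non-constant coefficients, and of the auxiliary parameter $\delta$ is precisely to make every such feedback term come with a coefficient small enough to be absorbed, simultaneously at all levels $s\le s_0$, against the good $x''$-contribution after Poincar\'e; carrying this absorption out while keeping all constants independent of $w$, $\varepsilon$ and $\delta$ is the technical heart of Section \ref{section4}.
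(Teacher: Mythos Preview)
Your overall architecture — regularize by $\delta\Delta$, prove uniform \`a priori estimates, pass to the limit — matches the paper's, and the higher-order commutator bookkeeping you sketch is essentially what the paper does in Lemma~\ref{lm:4.2}. The gap is at the very bottom of the scheme, in the basic $L^2$ coercivity.

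Your plan is to test against $\rho$, obtain $B(\rho,\rho)\ge c_0\|\partial_{x''}\rho\|_0^2-C\varepsilon^4\|\partial_{x'}\rho\|_0^2$, and then invoke Poincar\'e in $x''$. But the negative term cannot be absorbed: the operator supplies no control whatsoever on $\|\partial_{x'}\rho\|_0$, and adding $\delta\Delta$ only helps if $\delta\ge C\varepsilon^4$, which destroys the uniformity in $\delta$ you need to pass to the limit. The same defect undermines your uniqueness argument: $B(\rho,\rho)=0$ only forces $T_i\!\cdot\!\nabla\rho=0$ for $i\ge k$, which are $x$-dependent vector fields merely $O(\varepsilon^4)$-close to $\partial_{x_k},\dots,\partial_{x_n}$, and turning that into $\partial_{x''}\rho=0$ is exactly the leakage you have not controlled. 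Your final paragraph correctly identifies this as the crux, but the proposed resolution (``make every such feedback term small enough to be absorbed'') does not work at order zero because there is nothing on the good side to absorb into.

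The paper closes this gap by a different mechanism: it conjugates by the weight $e^{\mu\sum_{j\ge k}x_j^2}$ (equation \eqref{eq:4.18}). This produces a zeroth-order coefficient $c$ with $-c\approx 2\mu\sum_{i\ge k}(S_k^{ii}+\theta)\approx 2\mu(n{-}k{+}1)\sigma_{k-1}(\tau)$, and first-order coefficients of the special form $b=-2\mu\, a\nabla\phi$ with $a^{ij}=S_k^{ij}+\theta\delta^{ij}$. Pairing with $\rho$, the first-order term $\int(b\!\cdot\!\nabla\rho)\rho$ is controlled by Cauchy--Schwarz \emph{with respect to the nonnegative form $a$ itself}, yielding $|\int(b\!\cdot\!\nabla\rho)\rho|\le\tfrac12\int a^{ij}\rho_i\rho_j+C\mu^2\delta_0^2\|\rho\|_0^2$; the second-order part then only needs to contribute $\int a^{ij}\rho_i\rho_j\ge 0$, and the large $\mu\|\rho\|_0^2$ from $-\int c\rho^2$ swallows everything (this is \eqref{eq:4.37}, with $\mu_0\delta_0\le 1$). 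In particular one never needs the lower bound $B(\rho,\rho)\gtrsim\|\partial_{x''}\rho\|^2$, so the $O(\varepsilon^4)$ mismatch between $T(x,\varepsilon)$ and the block splitting is irrelevant. Once this weighted $L^2$ estimate is in place, your higher-order and limiting arguments go through essentially as written, with the commutator pieces now absorbed into $\|\rho\|_s^2$ rather than $\|\partial_{x''}\rho\|_s^2$.
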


\begin{remark}
Since the equation \eqref{eq:4.5} is degenerate elliptic, we can only get the a priori estimate \eqref{eq:5.38}  with a loss of order $2$. This loss of regularity of linearized equation ask us to use the
Nash-Moser-H\"ormander iteration to deal with the solution of  \eqref{eq:4.5}. By definition of  $r_{ij}$ by \eqref{2.abd}, if $P(y)=0$, then $\sum_{i,j=1}^n\|(W_{ij})\|_{s+2}$ is reduced to $\|(w)\|_{s+4}$ which is introduced in Theorem 1.3 of \cite{HZ}. When $l_0=0$, Theorem \ref{th:linearnore} $(2\leq k\leq n)$ is a generalization of Theorem 1.3 $( k=n)$ in \cite{HZ}.
The assumption $\|w\|_{C^{[\frac{n}{2}]+4}(\Omega)}\leq 1 (l_0=1)$ is necessary  in the estimates of the quadratic error in Lemma \ref{lm:du} for $f=f(y,u,Du)$ in Lemma \ref{lm:du}, although we will not give its estimates of the quadratic error ; while the assumption $\|w\|_{C^{[\frac{n}{2}]+3}(\Omega)}\leq 1 (l_0=0)$ is enough in case $f=f(y,u)$. The uniqueness for $s\geq 1$ follows from \eqref{eq:4.37} by taking $\nu=0.$
\end{remark}


\section{\`A priori estimates of solutions for linearized equations} \label{section4}

First of all, using the change of unknown function $\bar{\rho}=\rho
e^{\mu\sum_{j=k}^nx_{j}^{2}}$, we reduce \eqref{eq:4.5} to
\begin{equation}\label{eq:4.18}
\left\{
\begin{array}{l}
L(w)\bar\rho=\sum_{i,j=1}^{n}(S^{ij}_{k}(w)+
\delta_{i}^{j}\theta)\partial_{i}\partial_{j}\bar\rho+\sum_{i=1}^{n}b_{i}
\partial_{i}\bar\rho+c\bar\rho=e^{\mu \sum_{j=k}^nx_{j}^{2}}g, \ \ \  \mbox{in} \ \ \Omega\\
\bar{\rho}=0 \ \ \mbox{on} \ \ \partial Q_{\delta_0}\  \mbox{and periodic on} \ \ Q_{\pi}.
\end{array}\right.
\end{equation}
The coefficients $b_{i}$ and $c$ are expressed as follows.
$$
b_i=\left\{
\begin{array}{l}
-4\sum_{j=k}^n (\mu x_j) S^{ij}_k(w) ,\indent 1\leq i\leq k-1\\
-4(\mu x_i)\theta-4 \sum_{j=k}^n (\mu x_j) S^{ij}_k (w),\indent k\leq i\leq n.
\end{array}\right.
$$
$$
  c=-2\mu\sum_{i=k}^n(S^{ii}_k(w)+\theta)
  +4\sum_{i=k}^n\sum_{j=k}^n (\mu x_i)(\mu x_j) S^{ij}_k(w)+4\theta\sum_{i=k}^n (\mu x_i)^2.
$$
We would like to  prove Theorem \ref{th:linearnore} for \eqref{eq:4.18}  rather than  \eqref{eq:4.5}, and write $\rho$ instead of $\bar\rho,$ but also not do it directly, we will  consider  the solution $\rho_\nu$ to
 the regularized version of \eqref{eq:4.18}, i.e., the  following uniformly elliptic  equation, for $ 0<\nu<1$,
\begin{equation}\label{eq:4.38}\left\{
\begin{array}{l}
L_{\nu}\rho\equiv\sum_{i,j=1}^{n}(S^{ij}_{k}+
\delta_{i}^{j}\theta)\partial_{i}\partial_{j}\rho+\nu\triangle \rho+\sum_{i=1}^{n}b_{i}
\partial_{i}\rho+c\rho=g,  \mbox{in} \ \ \Omega,\\
\rho=0 \ \ \mbox{on} \ \ \partial Q_{\delta_0}\  \mbox{and periodic on} \ \ Q_{\pi}.
\end{array}\right.
\end{equation}

We first need the following Lemmas which is standard for the degenerate elliptic operators. So we only point out some important
steps for the proof.

\begin{lemma}\label{th:s2}
Suppose that $w$ is smooth and  $\|w\|_{C^4(\Omega)}\leq 1.$
Then there exists two positive constants  $\mu_0$ large and
$\varepsilon_{0}$ small such that, for $0<\varepsilon\leq \delta_{0}$, $ \mu_0\delta_0\leq 1,$
  $g\in {H}^0(\Omega)$  and  $\nu>0,$ problem \eqref{eq:4.38} admits an unique solution $\rho_\nu\in  \mathbf{H}^1_0(\Omega)$ , which satisfies
\begin{equation}\label{eq:4.17}
\|\rho_\nu\|_{0}\leq C_0\|g\|_{0},
\end{equation}
where $C_0$ is uniform for  $\nu\in ]0, 1[, \varepsilon\in ]0, \delta_0]$ and independent of $w.$
\end{lemma}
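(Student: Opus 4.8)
The plan is to follow the classical Lax–Milgram / elliptic regularity route for the uniformly elliptic regularized equation \eqref{eq:4.38}, exploiting the degenerate-ellipticity structure established in Proposition \ref{lm:ellip}, and to keep careful track that all constants are uniform in $\nu$, $\varepsilon$ and $w$. Since for $0<\nu<1$ the operator $L_\nu$ is \emph{uniformly} elliptic (the added $\nu\Delta$ dominates the possibly-degenerate directions $\tilde\xi_1,\dots,\tilde\xi_{k-1}$, while the directions $\tilde\xi_k,\dots,\tilde\xi_n$ already carry $\sigma_{k-1,i}(\lambda)=\sigma_{k-1}(\tau)+O(\varepsilon)>0$ by \eqref{eq:4.12}), existence and uniqueness of $\rho_\nu\in\mathbf H^1_0(\Omega)$ periodic in $x'$ and vanishing on $\partial Q_{\delta_0}$ is standard, once coercivity of the associated bilinear form is verified on $\mathbf H^1_0(\Omega)$. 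The genuine content of the lemma is therefore the $\nu$-uniform bound \eqref{eq:4.17}; the bulk of the proof is an energy estimate.

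First I would write down the bilinear form: multiply \eqref{eq:4.38} by $\rho_\nu$, integrate over $\Omega$ (legitimately, because of the periodicity in $x'$ no boundary terms appear there, and $\rho_\nu=0$ on $\partial Q_{\delta_0}$ kills the $x''$-boundary terms), and integrate the second-order term by parts. This produces $\int \sum_{ij}(S^{ij}_k+\delta^j_i\theta)\partial_i\rho_\nu\partial_j\rho_\nu + \nu\int|\nabla\rho_\nu|^2$ plus first-order terms coming both from the $b_i\partial_i\rho_\nu$ piece and from the derivatives falling on the coefficients $S^{ij}_k$, and the zeroth-order term $\int c\,\rho_\nu^2$. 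The degenerate-ellipticity from Proposition \ref{lm:ellip} gives $\sum_{ij}(S^{ij}_k+\delta^j_i\theta)\partial_i\rho_\nu\partial_j\rho_\nu\ge \sigma_{k-1,i}(\lambda)|\partial_i\rho_\nu|^2\ge c_0|\partial_{x''}\rho_\nu|^2$ for the good directions with $c_0=\sigma_{k-1}(\tau)/2>0$. The key structural point is that $b_i$ and $c$ come with factors $\mu x_j$ or $\mu x_i$, hence are $O(\mu\delta_0)$ on $\Omega$ after the exponential conjugation, and — crucially — $c$ contains the \emph{good-sign} term $-2\mu\sum_{i=k}^n(S^{ii}_k+\theta)\le -2\mu(n-k+1)c_0<0$ which, for $\mu=\mu_0$ large and $\mu_0\delta_0\le 1$, dominates the remaining $O(\mu^2\delta_0^2)$ and $O(1)$ error terms; this is exactly the device (due to Hong–Zuily) for which the exponential weight $e^{\mu\sum x_j^2}$ was introduced. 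Combining, one gets $\,c_0\|\partial_{x''}\rho_\nu\|_0^2 + \mu c_0 \|\rho_\nu\|_0^2 + \nu\|\nabla\rho_\nu\|_0^2 \le C\|g\|_0\|\rho_\nu\|_0 + (\text{absorbed terms})$, and after absorbing and using Cauchy–Schwarz with a small parameter one obtains $\|\rho_\nu\|_0\le C_0\|g\|_0$ with $C_0$ depending only on $\mu_0$, $c_0$ — hence independent of $\nu$, $\varepsilon$, $w$ (the dependence on $w$ enters only through $\|w\|_{C^4}\le 1$, which bounds all the $S^{ij}_k$ and their first derivatives uniformly). Existence of $\rho_\nu\in\mathbf H^1_0$ then follows from Lax–Milgram applied to the conjugated, $\nu$-regularized form on $\mathbf H^1_0(\Omega)$ (coercivity being the estimate just derived, with the $\nu\|\nabla\rho\|^2$ term supplying $H^1$-control), and uniqueness from the same energy inequality applied to $g=0$.

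The main obstacle, and the place requiring genuine care, is the bookkeeping that makes $C_0$ truly uniform: one must check that every error term produced by integration by parts — in particular $\int (\partial_i S^{ij}_k)(\partial_j\rho_\nu)\rho_\nu$ and the $\mu$-weighted first-order terms $b_i$ — is controlled either by the good negative zeroth-order term $-\mu c_0\|\rho_\nu\|_0^2$ or by an $\eta$-fraction of $\|\partial_{x''}\rho_\nu\|_0^2$ plus $\|g\|_0^2$, \emph{without} ever needing to control $\|\partial_{x'}\rho_\nu\|_0$ (which is only available with the non-uniform factor $\nu^{-1}$). This forces one to be sure that the coefficients of the dangerous $x'$-derivatives in the cross terms themselves carry a factor $\varepsilon$ or $\mu\delta_0$ small — this is guaranteed because, by \eqref{2.12} and \eqref{eq:8.22}, the off-diagonal and $x'$-direction entries $r_{ij}$ with $i$ or $j\le k-1$ are $O(\varepsilon^4)$, so the corresponding $S^{ij}_k$ (for mixed indices) and their $x'$-derivatives are $O(\varepsilon^4)$ as well. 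Choosing first $\mu_0$ large (to make the good term beat the $O(1)$ errors), then $\delta_0$ small with $\mu_0\delta_0\le 1$, then $\varepsilon_0$ small (to beat the cross terms), closes the estimate; I would present these choices in that order and state explicitly at the end that $C_0$ depends on none of $\nu\in(0,1)$, $\varepsilon\in(0,\delta_0]$, or $w$ with $\|w\|_{C^4}\le 1$.
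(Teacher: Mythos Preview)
Your approach is essentially the same as the paper's: apply Lax--Milgram to the bilinear form $\langle -L_\nu\rho,\varrho\rangle$, use the $\nu\Delta$ term for $H^1$-coercivity, and obtain the $\nu$-uniform $L^2$ bound from the good-sign zeroth-order term $-2\mu\sum_{i=k}^n(S_k^{ii}+\theta)$ produced by the exponential weight. The paper states the coercivity inequality $-\langle L_\nu\rho,\rho\rangle\ge \nu\|D\rho\|_0^2+2\sigma_{k-1}(\tau)\|\rho\|_0^2$ and defers its proof to Lemma~1.4 of Hong--Zuily; you have simply written out that argument in more detail, including the bookkeeping on the $O(\varepsilon^4)$ cross terms and the order of choosing $\mu_0,\delta_0,\varepsilon_0$.
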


\begin{proof}
We prove the existence and uniqueness of the solution $\rho_\nu$ to \eqref{eq:4.38}  by applying Lax-Milgram Theorem to  the bilinear form
$$
<-L_\nu \rho,\varrho>
$$
where $<\cdot,\cdot>$ is the dual pair on $H^{-1}\times H^1_0.$   The condition $\|w\|_{C^{4}}\leq 1$  yields
$$
|<-L_{\nu}\rho,\varrho>|\leq C\|\rho\|_1\|\varrho\|_1, \indent \forall\rho,\varrho\in H^1_0,
$$
where $C$ is uniform on $0<\varepsilon<1,0<\nu<1$.  For the coercivity, the proof of which is almost the same as that of Lemma 1.4, \cite{HZ}, there exist $\varepsilon_0>0$ small and large $\mu>0$
such that
\begin{equation}\label{eq:4.37}
-<L_{\nu}\rho,\rho> \,\,\geq \nu \|D\rho\|^2_0+ 2\sigma_{k-1}(\tau)\|\rho\|^{2}_0,
\end{equation}
then by using Lax-Milgram Theorem, for $g\in {H}^0(\Omega)$  and  $\nu>0,$ problem \eqref{eq:4.38} admits an unique solution $\rho_\nu\in  \mathbf{H}^1_0(\Omega)$. Since $|-<L_{\nu}\rho_\nu,\rho_\nu>|=|<g,\rho_\nu>|\leq \|g\|_0\|\rho_\nu\|_0$, then \eqref{eq:4.17} follows from \eqref{eq:4.37}.
\end{proof}

Similarly to the proof of Theorem 1.3, \cite{HZ},  we have the higher order \`a priori estimates

\begin{lemma}\label{lm:4.2}
Suppose that $w$ is smooth and  $\|w\|_{C^{[\frac{n}{2}]+3}(\Omega)}\leq 1.$
 For $ s>0$,  then there exists
$\varepsilon_{0}(s)>0$ small such that, for $0<\varepsilon\leq \varepsilon_{0}(s),$ and $g\in {H}^s(\Omega)$,
the problem \eqref{eq:4.38} admits an unique solution $\rho_\nu\in  \mathbf{H}^{s+1}_0(\Omega)$ , which satisfies
\begin{equation}\label{eq:4.17+}
\left\{\begin{array}{l}
\|\rho_\nu\|_{s}\leq C_{s}(\|g\|_{s}+\sum_{i,j=1}^n\|W_{ij}\|_{s+2}\|\rho_\nu\|_{L^{\infty}}),\,\,\,\mbox{if}\,\, s>\left[\frac{n}{2}\right]+1;\\
\|\rho_\nu\|_{s}\leq C_{s} \|g\|_{s}, \,\,\,\,\, \mbox{if}\,\, s\le \left[\frac{n}{2}\right]+1
\end{array}\right.
\end{equation}
where $C_s$ is uniform for  $\nu\in ]0, 1], \varepsilon\in ]0,\varepsilon_0(s)]$ and independent of $w$.
\end{lemma}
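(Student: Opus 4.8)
The plan is to follow the scheme of Theorem~1.3 of \cite{HZ}, the genuinely new ingredient being the separation between the \emph{degenerate but periodic} variables $x'=(x_1,\dots,x_{k-1})$ and the \emph{uniformly elliptic, Dirichlet} variables $x''=(x_k,\dots,x_n)$. Starting from Lemma~\ref{th:s2} I would bootstrap in the integer $s$, upgrading the regularity of the solution $\rho_\nu$ to $\mathbf{H}^{s+1}_0(\Omega)$ and proving the quantitative bound \eqref{eq:4.17+}. Before the induction I would record two structural facts. First, all coefficients of $L_\nu$ — namely $S^{ij}_k(w),b_i,c,\theta$ — are smooth and periodic in $x'$ (thanks to the cut-off $\chi(\varepsilon^{-2}y')$ in $P$), and $S^{ij}_k(w)$ is a smooth function of the Hessian $\mathbf{r}$, hence of $W_{ij}=P_{y_iy_j}(\varepsilon^2x)+\varepsilon^{9/2}w_{y_iy_j}$; since $P_{ij}(\varepsilon^2x)=O(\varepsilon^4)$ and $\varepsilon^{9/2}\|w\|_{C^{[n/2]+3}}\le\varepsilon^{9/2}$, every coefficient is $C^{[n/2]+1}$-close to the constant-coefficient operator $\mathcal{L}_\varphi+\theta\Delta$ once $\varepsilon$ is small, which legitimizes the perturbative absorptions below. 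Second, the sub-block $(S^{ij}_k(w))_{k\le i,j\le n}$ is uniformly positive definite, its eigenvalues being $\sigma_{k-1,i}(\lambda)=\sigma_{k-1}(\tau)+O(\varepsilon)$ for $k\le i\le n$ — equivalently, by the strict diagonal dominance of $(P_{ij})_{k\le i,j\le n}$ in Proposition~\ref{prop:2.3} and the smallness of $\varepsilon$.

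For the \emph{tangential} estimate I would expand $\rho_\nu=\sum_{\ell\in\mathbb{Z}^{k-1}}\alpha_\ell(x'')e^{\sqrt{-1}\,\ell\cdot x'}$ and apply the tangential Fourier multiplier $\Lambda_{x'}^{t}$ with symbol $(1+|\ell|^2)^{t/2}$ (rigorously via truncation or difference quotients), together with difference quotients in the $x''$-directions tangent to $\partial Q_{\delta_0}$. Then $\Lambda_{x'}^{t}\rho_\nu$ solves an equation of the same form with right-hand side $\Lambda_{x'}^{t}g+[\Lambda_{x'}^{t},L_\nu]\rho_\nu$, and it still lies in $\mathbf{H}^1_0$, so applying the $\nu$-uniform coercivity \eqref{eq:4.37} to it and using the equation gives
$$
\nu\|D\Lambda_{x'}^{t}\rho_\nu\|_0^2+2\sigma_{k-1}(\tau)\|\Lambda_{x'}^{t}\rho_\nu\|_0^2\le\|\Lambda_{x'}^{t}g\|_0\,\|\Lambda_{x'}^{t}\rho_\nu\|_0+\bigl|\langle[\Lambda_{x'}^{t},L_\nu]\rho_\nu,\Lambda_{x'}^{t}\rho_\nu\rangle\bigr|.
$$
The commutator would be controlled by Moser-type product estimates, splitting according to whether the derivatives fall on the coefficients or on $\rho_\nu$: the first contributes $\lesssim\sum_{i,j}\|W_{ij}\|_{t+2}\|\rho_\nu\|_{L^\infty}$ when $t>[n/2]+1$ and is otherwise absorbed into a small multiple of $\|g\|_t$ plus lower-order norms of $\rho_\nu$, using the smallness of $\varepsilon$ and $\|w\|_{C^{[n/2]+3}}\le1$; the second is lower order in $\rho_\nu$ and is handled by the induction hypothesis. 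This controls the weighted $L^2$-part and the mixed derivatives of $\rho_\nu$.

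For the \emph{normal} estimate I would solve \eqref{eq:4.38} for the pure second derivative in the direction normal to $\partial Q_{\delta_0}$ (in the interior of $Q_{\delta_0}$, simply for $\partial_{x''}^2\rho_\nu$), which is licit by the uniform positivity of $(S^{ij}_k+\nu\delta^j_i)_{k\le i,j\le n}$; its right-hand side involves only $g$, terms carrying at least one tangential derivative of $\rho_\nu$ (already estimated), and lower-order terms. With the Dirichlet condition on $\partial Q_{\delta_0}$ this upgrades the tangential bounds to full $H^j(Q_{\delta_0})$-bounds for each $\alpha_\ell$ carrying the weights $(1+|\ell|^2)^t$, hence to a bound for $\|\rho_\nu\|_s$ in the $\mathbf{H}^s$-norm. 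Choosing $\varepsilon_0(s)$ small to absorb the perturbative terms with small constants, using Lemma~\ref{th:s2} together with Sobolev embedding for $\|\rho_\nu\|_{L^\infty}$, and invoking the induction hypothesis then closes \eqref{eq:4.17+}; the dichotomy at $s=[n/2]+1$ is exactly the threshold at which $H^{s+2}\hookrightarrow L^\infty$ becomes available for the coefficients, and uniqueness in $\mathbf{H}^{s+1}_0$ is already contained in Lemma~\ref{th:s2}.

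I expect the main obstacle to be the bookkeeping in the commutator and Moser estimates: showing that exactly two derivatives are lost and that the dependence on the coefficients is \emph{linear} in $\sum_{i,j}\|W_{ij}\|_{s+2}$ and in $\|\rho_\nu\|_{L^\infty}$, all uniformly in $\nu\in(0,1]$ and $\varepsilon\le\varepsilon_0(s)$. The degeneracy rules out a single global elliptic estimate, so one must extract the periodic tangential estimate and the uniformly elliptic normal estimate separately and then glue them along $\partial Q_{\delta_0}$; keeping the constants independent of $w$ forces one to route every would-be high coefficient norm through the $\|W_{ij}\|_{s+2}\|\rho_\nu\|_{L^\infty}$ term, exactly as recorded in \eqref{eq:4.17+}.
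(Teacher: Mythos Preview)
Your proposal is correct and follows essentially the same route as the paper, which merely sketches the argument and defers the details to Theorem~1.3 of \cite{HZ}: differentiate the equation by $\partial^\alpha$, apply the $\nu$-uniform coercivity \eqref{eq:4.37} to $\partial^\alpha\rho_\nu$, and bound the commutator $[L_\nu,\partial^\alpha]$ by interpolation, with the dichotomy at $s=[\tfrac{n}{2}]+1$ arising exactly as you describe. Your explicit separation into tangential (periodic $x'$, plus $x''$-directions tangent to $\partial Q_{\delta_0}$) and normal estimates is precisely the standard mechanism that makes the paper's formal use of $\partial^\alpha\rho_\nu$ as a test function rigorous when $\alpha$ carries derivatives transverse to $\partial Q_{\delta_0}$; the paper's sketch elides this step but it is implicit in the reference to \cite{HZ}.
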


\begin{proof}  For $s=0$, Since $-(L_{\nu}\rho,\rho)=-(g,\rho),$ then \eqref{eq:4.37} and Cauchy inequality yields
$$
\nu\|D\rho\|_0^2+\|\rho\|_0^2\leq C_{0}(\tau)\|g\|^{2}_{0},
$$
where $C_0(\tau)$ is independent of $\nu,w$ and $\varepsilon.$  On the other hand, for $\alpha\in\mathbb{N}^n, |\alpha|\le s$
$$
(L_{\nu}(\partial^\alpha \rho),(\partial^\alpha \rho))=(\partial^\alpha g, (\partial^\alpha \rho))+([L_{\nu},\partial^{\alpha}]\rho, (\partial^\alpha \rho)),
$$
where the commutators is
$$
[L_{\nu},\partial^{\alpha}]=-\sum_{\beta\leq \alpha,|\beta|\geq1}\sum_{i,j=1}^nC_{\alpha\beta}\Big(\partial^{\beta}(S^{ij}_{k})\partial_{i}\partial_{j}
+\sum_{i=1}^n\partial^{\beta}b_{i}\partial_{i}
+\partial^{\beta}c\Big)\partial^{\alpha-\beta},
$$
here the coefficients depends on $D^2 w$, by using the interpolation inequalities, we can get:

\noindent
{\bf 1)} If $s\le \left[\frac{n}{2}\right]+1$, then the condition $\|w\|_{C^{[\frac{n}{2}]+3}(\Omega)}\leq 1$ imply
$$
|([L_{\nu},\partial^{\alpha}]\rho, (\partial^\alpha \rho))|\le \varepsilon C \|\rho\|^2_s.
$$

\noindent
{\bf 2)} If $s> [\frac{n}{2}]+1$, a little involved computation also give
$$
|([L_{\nu},\partial^{\alpha}]\rho, (\partial^\alpha \rho))|\leq
C_s\Big(\varepsilon\|\rho\|_{s}^{2}+\|\rho\|_s\big(\|g\|_s+\|\rho\|_{L^{\infty}}\sum_{i,j=1}^n\|W_{ij}\|_{s+2}\big)\Big),
$$
where $C_s$ depends only on $s$, so we finish the proof.
\end{proof}

With Lemmas \ref{th:s2} and \ref{lm:4.2}, we can now prove Theorem  \ref{th:linearnore}.

\begin{proof}  {\bf The proof of Theorem  \ref{th:linearnore}.}  To simplify the computation, we consider only the case $l_0=0$, and prove \eqref{eq:5.38}
 under the assumption $\|w\|_{C^{[\frac{n}{2}]+3}(\Omega)}\leq 1.$
 Now for  $0\leq s\leq [\frac{n}{2}]+1$, we can apply  Banach-Saks Theorem to find a subsequence
$\rho_{\nu_j}$ with $\nu_j=j^{-2}$ and an element $\rho_0\in \mathbf{H}^s_0$ such that
$$
\|\frac{\rho_{\nu_1}+\cdots+\rho_{\nu_m}}{m}-\rho_0\|_s\rightarrow 0\,\, (m\rightarrow \infty).
$$
Since  $\frac{\rho_{\nu_1}+\cdots+\rho_{\nu_m}}{m} $ is periodic in $x'$ for each $m,$ so is $\rho_0$. Back to \eqref{eq:4.38}, we have
$$
\left[\sum_{i,j=1}^{n}(S^{ij}_{k}+
\delta_{i}^{j}\theta)\partial_{i}\partial_{j}+\sum_{i}^{n}b_{i}
\partial_{i}+c\right]\frac{\rho_{\nu_1}+\cdots+\rho_{\nu_m}}{m}+\frac{1}{m}\sum_{j=1}^m\nu_j\triangle \rho_{\nu_j}=g.
$$
For any test function $\phi\in C^\infty_0$, Lemma \ref{th:s2} yields
$$
|(\frac{1}{m}\sum_{j=1}^m\nu_j\triangle \rho_{\nu_j}, \phi)_{L^2}|\le \|\triangle \phi\|_0\, \|\frac{1}{m}\sum_{j=1}^m\nu_j\rho_{\nu_j}\|_0\leq C_0
 \|\triangle \phi\|_0\, \|g\|_0\,\frac{1}{m}\sum_{j=1}^m\nu_j\rightarrow 0,
$$
taking $m\rightarrow \infty,$  we have that $\rho_0$ is a solution of \eqref{eq:4.5} in the sense of  distribution :
$$
\sum_{i,j=1}^{n}(S^{ij}_{k}+
\delta_{i}^{j}\theta)\partial_{i}\partial_{j}\rho_0+\sum_{i}^{n}b_{i}\partial_{i}\rho_0+c\rho_0=g.
$$
Moreover, by Lemma \ref{lm:4.2}
$$
\|\rho_0\|_{s}=\lim_{m\rightarrow \infty}\|\frac{\rho_{\nu_1}+\cdots+\rho_{\nu_m}}{m}\|_s\leq C_{s}\|g\|_{s}, \ \ \  \mbox{for} \ \ 0\leq s\leq [\frac{n}{2}]+1.
$$
Now we prove \eqref{eq:5.38} for $s>[\frac{n}{2}]+1$. Since Lemma \ref{lm:4.2} yields
$$
\|\rho_\nu\|_{[\frac{n}{2}]+1}\leq C_{[\frac{n}{2}]+1}\|g\|_{[\frac{n}{2}]+1},
$$
by Sobolev imbedding theorem , $\|\rho_\nu\|_{L^\infty}< \infty$, moreover if  $\sum_{i,j=1}^n\|W_{ij}\|_{s+2}<\infty$, then \eqref{eq:4.17+} shows that
$$
\|\rho_\nu\|_{s}\leq C<\infty \ \ \mbox{for}\ \ \ s>[\frac{n}{2}]+1,
$$
then, by weak compactness theorem,  there is a subsequence $\rho_{v_j}$ of $\rho_v$ and $\rho_0$ such that
$\rho_{\nu_j}\rightarrow \rho_0$
in the weak topology of $W^{s,2}(\Omega)$ and
$$
\|\rho_{\nu_j}-\rho_0\|_{L^\infty(\Omega)}\rightarrow 0,\|\rho_{\nu_j}-\rho_0\|_{W^{1,2}(\Omega)}\rightarrow 0,
$$
therefore $\rho_0$ is periodic in $x'$ , $\rho_0\in H^s_0(\Omega),$  and by  Lemma \ref{lm:4.2} again
$$
\|\rho_{0}\|_{s}\leq \liminf_{j\rightarrow \infty}\|\rho_{\nu_j}\|_{s}\leq C_{s}(\|g\|_{s}+\sum_{i,j=1}^n\|W_{ij}\|_{s+2}\|\rho_0\|_{L^{\infty}}),
$$
so we complete the proof of \eqref{eq:5.38}.
\end{proof}


\section{Nash-Moser-H\"ormander iteration}\label{section5}

We prove firstly the existence of $k$-convex local solution of \eqref{eq:8.8++} as a perturbation of $\psi$ constructed in Section \ref{section2} by employing the Nash-Moser-H\"{o}mander  iteration, which is based on  the \`a priori estimates established in last section.  Since the linearized operators is degenerate elliptic,  a loss of regularity of order $2$ has occurred for the solution of linearized equation, so we need to {\bf mollify} the solution by taking  a family of smoothing operators $S(t), t\geq 1$ such that
$$
S(t): \, \cup_{s\geq 0}\mathbf{H}^s_0(\Omega)\,\longrightarrow \,\cap_{s\geq 0}\mathbf{H}^s(\Omega)
$$
with the following properties:
\begin{equation}\label{eq:6.5+}
\|S(t)u\|_{s_1}\leq C_{s_1s_2}\|u\|_{s_2}, \indent \text{if} \indent s_1\leq s_2,
\end{equation}
\begin{equation}\label{eq:6.6+}
 \|S(t)u\|_{s_1}\leq C_{s_1s_2}t^{s_1-s_2}\|u\|_{s_2}, \indent \text{if}\indent s_1\geq s_2,
\end{equation}
\begin{equation}\label{eq:6.7+}
 \|S(t)u-u\|_{s_1}\leq C_{s_1s_2}t^{s_1-s_2}\|u\|_{s_2}, \indent \text{if}\indent s_1\leq s_2,
\end{equation}
where $C_{s_1s_2}$  is independent of $t$ and depends only on $s_1,s_2$ , see \cite{AG} for more detailed properties of smoothing operators.

Now we define $S_m=S(\mu_m)$ with $\mu_m=\sigma^{\gamma^m}$ where $\sigma>1, \gamma>1$ to be determined later, we use the following iteration procedure:
\begin{equation}\label{eq:6.9}
\left\{
\begin{array}{l}
 w_0=0,w_{m+1}=w_m+S_m\rho_m, \\
 L(w_m)\rho_m=L_G(w_m)\rho_m+\theta_m\triangle \rho_m=g_m, \indent x\in \Omega\\
 \rho_m\in \mathbf{H}^s_0(\Omega),g_m\in H^s(\Omega).
\end{array}\right.
\end{equation}
where $g_m$ and $\theta_m$ are defined in \eqref{eq:6.10},  $G(w)$ is defined in \eqref{eq:8.18}.

Remember we are studying the equation \eqref{eq:4.4} in the variables $x$ rather than $y=\varepsilon^2 x$,
and
\begin{equation*}
u_m(y)=\frac{1}{2}\sum_{j=1}^{k-1}\tau_jy_j^2+P(y)+\varepsilon^{\frac{17}{2}} w_m(\varepsilon^{-2}y).
\end{equation*}
Denoting
\begin{align*}
&\mathcal{M}(s)=\|g_0\|_s+\varepsilon^4\sum_{i,j=1}^n\|P_{ij}(\varepsilon^2\ \cdot)\|_{s-1},\\
&\mathcal{N}(s)=\mathcal{M}(s)+\sum_{i,j=1}^n\|P_{ij}(\varepsilon^2 \ \cdot)\|_{s+2}\Big[1+\mathcal{M}\Big(\left[\frac{n}{2}\right]
+1\Big)\Big]
\end{align*}
which are small if $\varepsilon>0$ small. We prove the \`a priori estimates by induction.

\begin{lemma}\label{lemma4.3}
Suppose that  $\|w_l\|_{C^{[\frac{n}{2}]+3}}\leq 1$   for $0\le l\le m$. Then
\begin{equation}\label{eq:6.11}
  \|g_m\|_s\leq C_s(\mathcal{M}(s)+\|w_m\|_{s+2})
\end{equation}
and
\begin{equation}\label{eq:6.12}
  \|w_{m+1}\|_{s+4}\leq C_s^{m+1}\mu_{m+1}^\beta \mathcal{N}(s), \indent \text{for}\indent \beta=\frac{4}{\gamma-1},
  \end{equation}
where $C_s$ is independent of $m$ and $\gamma.$
\end{lemma}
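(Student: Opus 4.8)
The plan is to prove both estimates \eqref{eq:6.11} and \eqref{eq:6.12} simultaneously by induction on $m$, as is standard in Nash--Moser--H\"ormander schemes, using Theorem \ref{th:linearnore} as the linear engine. First I would establish \eqref{eq:6.11}. By definition \eqref{eq:6.10}, $g_m = -G(w_m) = -\varepsilon^{-9/2}\{S_k(\psi+\varepsilon^{17/2}w_m(\varepsilon^{-2}y)) - \tilde K\}$, so $\|g_m\|_s$ must be estimated through the $k$-Hessian operator applied to the Hessian matrix $\mathbf r(w_m)$ of \eqref{2.abd}. The entries of $\mathbf r$ are $\sum_l \delta_i^j\delta_j^l\tau_l + P_{ij}(\varepsilon^2 x) + \varepsilon^{9/2}w_{ij}(x)$; since $S_k$ is a polynomial of degree $k$ in the entries, expanding and using the Moser-type tame product/composition estimates in the $\mathbf H^s$ scale — together with the bound $\|w_m\|_{C^{[n/2]+3}}\le 1$ to control the low norms of the nonlinear factors — yields $\|g_m\|_s \le C_s(\|g_0\|_s + \varepsilon^4\sum_{i,j}\|P_{ij}(\varepsilon^2\cdot)\|_{s-1} + \|w_m\|_{s+2})$, which is exactly \eqref{eq:6.11} with $\mathcal M(s)$ absorbing the $P$-terms and the $g_0$ term (note $g_0 = -G(0)$, the approximate-solution error controlled by Proposition \ref{prop:2.3}). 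The factor $\varepsilon^4$ in front of the $P$-term comes from the scaling $\varepsilon^{9/2}/\varepsilon^{?}$ combined with the explicit powers of $\varepsilon$ in \eqref{2.abd}.

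Next I would prove \eqref{eq:6.12} by induction. Assume the estimate holds for all indices $\le m$ (and hence $\|w_l\|_{C^{[n/2]+3}}\le 1$ for $0\le l\le m$, which needs to be verified as part of the induction by choosing $\sigma$ large and using the smallness of $\mathcal N$). Apply Theorem \ref{th:linearnore} to the linearized equation $L(w_m)\rho_m = g_m$ in \eqref{eq:6.9}: for $s$ above the critical index we get $\|\rho_m\|_s \le C_s(\|g_m\|_s + \sum_{i,j}\|W_{ij}\|_{s+2}\|\rho_m\|_{L^\infty})$, where $W_{ij} = P_{ij}(\varepsilon^2\cdot) + \varepsilon^{9/2}(w_m)_{ij}$, and for $s$ below the critical index $\|\rho_m\|_s \le C_s\|g_m\|_s$. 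Combining with \eqref{eq:6.11}, the smoothing properties \eqref{eq:6.5+}--\eqref{eq:6.7+}, and $w_{m+1} = w_m + S_m\rho_m$, one obtains a recursive inequality of the shape $\|w_{m+1}\|_{s+4} \le \|w_m\|_{s+4} + \|S_m\rho_m\|_{s+4} \le \|w_m\|_{s+4} + C_s\mu_m^{4}(\mathcal M(\cdot) + \|w_m\|_{\cdot+2}+\dots)$, where the gain $\mu_m^4 = \mu_m^{s+4-s}$ from \eqref{eq:6.6+} is offset against the loss of order $2$ in the linear estimate and the extra $+2$ in the $W_{ij}$ term, so effectively one loses $4$ derivatives — hence the exponent $\beta = 4/(\gamma-1)$. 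Feeding in the inductive bound $\|w_m\|_{s+4}\le C_s^m\mu_m^\beta\mathcal N(s)$ and using $\mu_{m+1} = \mu_m^\gamma$, the geometric-series manipulation $\sum_{l\le m}\mu_l^\beta \mu_l^{?} \lesssim \mu_{m+1}^\beta$ (valid precisely because $\gamma>1$ and $\beta$ is chosen so that the partial sums telescope into $\mu_{m+1}^\beta$) closes the induction, picking up one more factor of $C_s$ to give $C_s^{m+1}$.

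The main obstacle is the bookkeeping of the exponents: one must check that the choice $\beta = 4/(\gamma-1)$ is consistent with (i) the loss of $2$ derivatives in the a priori estimate \eqref{eq:5.38}, (ii) the additional $2$ derivatives hidden in $\|W_{ij}\|_{s+2}$, (iii) the smoothing gains/losses in \eqref{eq:6.5+}--\eqref{eq:6.7+} at the transition between $s\le [n/2]+1$ and $s>[n/2]+1$, and (iv) the requirement that $\mu_m^\beta$ times the low-norm error stays summable. The delicate point is that the $\|\rho_m\|_{L^\infty}$ factor in the high-norm estimate must be controlled by a low norm of $\rho_m$, which by the low-norm part of \eqref{eq:5.38} is bounded by $C\|g_m\|_{[n/2]+1}$, and this in turn by \eqref{eq:6.11} needs $\|w_m\|_{[n/2]+3}$ — exactly the quantity the hypothesis $\|w_l\|_{C^{[n/2]+3}}\le 1$ controls via Sobolev embedding. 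A secondary technical issue is verifying that each $w_{m+1}$ remains admissible, i.e. $\|w_{m+1}\|_{C^{[n/2]+3}}\le 1$, which follows from \eqref{eq:6.12} at $s = [n/2]-1$ (so that $s+4 = [n/2]+3$), Sobolev embedding, and the smallness of $\mathcal N$ together with a suitable relation between $\sigma$, $\gamma$ and $\varepsilon$ — this is where $\varepsilon_0$ and the iteration parameters get finally pinned down.
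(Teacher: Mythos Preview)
Your proposal is correct and follows essentially the same route as the paper: for \eqref{eq:6.11} the paper writes $G(w_m)-G(0)=\int_0^1 \sum_{i,j} S_k^{ij}(\mathbf r_m(t))(w_m)_{ij}\,dt$ and applies tame product/interpolation estimates exactly as you describe, and for \eqref{eq:6.12} it derives the recursion $\|w_{m+1}\|_{s+4}\le C\mu_m^4(\mathcal N(s)+\|w_m\|_{s+4})$ via \eqref{eq:5.38} (controlling $\|\rho_m\|_{L^\infty}$ through $\|g_m\|_{[n/2]+1}$ just as you outline) and iterates it from $w_0=0$, the sum $4\sum_{l\le m}\gamma^l\le \tfrac{4}{\gamma-1}\gamma^{m+1}$ producing exactly $\mu_{m+1}^\beta$. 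One organizational point: the bound $\|w_l\|_{C^{[n/2]+3}}\le 1$ is a \emph{hypothesis} of this lemma, not part of its proof --- your ``secondary technical issue'' of propagating $\|w_{m+1}\|_{C^{[n/2]+3}}\le 1$ is carried out only later (see \eqref{eq:6.29}), after Lemma~\ref{lm:du} supplies the decay of $\|g_m\|_0$, so do not fold it into the induction for \eqref{eq:6.12}.
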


\begin{proof}  We prove first \eqref{eq:6.11}. Remembering
$$
\mathbf{r}_m=\left(\sum_{l=1}^{k-1}\delta_{i}^{j}\delta_j^l\tau_l+ P_{ij}(\varepsilon^2x)+
 \varepsilon^{\frac{9}{2}} (w_m)_{ij}(x)\right)
$$
using Taylor expansion and $w_0=0$, we have
$$
-g_m=G(w_m)=G(w_0)+\int^1_0\frac{\partial}{\partial t}[G(t w_m)]d t=-g_0+\int^1_0\sum_{i,j=1}^nS_k^{ij}(\mathbf{r}_m(t))\frac{\partial^2w_m}{\partial x_i\partial x_j} dt.
$$
 Hence
\begin{align*}
\|g_m\|_s&\le \|g_0\|_s+\sum_{i,j=1}^n\sum_{|\alpha|+\beta|\le s}C^\beta_\alpha
\|\partial^{\alpha}S_k^{ij}(\mathbf{r}_m)\partial^{\beta}(\frac{\partial^2w_m}{\partial x_i\partial x_j})\|_0.
\end{align*}
By using $\|w_m\|_{C^{[\frac{n}{2}]+3}}\leq 1$, we have
$$
\sum_{|\alpha|+|\beta|\leq s, |\alpha|\leq [\frac{n}{2}]+1}\|\partial^{\alpha}S_k^{ij}(\mathbf{r}_m)\partial^{\beta}(\frac{\partial^2w_m}{\partial x_i\partial x_j})\|_0\leq C\sum_{|\beta|\leq s}\|\partial^{\beta}(\frac{\partial^2w_m}{\partial x_i\partial x_j})\|_0\leq C\|w_m\|_{s+2}.
$$
On the other hand, by interpolation and $\|w_m\|_{C^{[\frac{n}{2}]+3}}\leq 1$,
\begin{align*}
&\sum_{|\alpha|+|\beta|\leq s, |\alpha|> [\frac{n}{2}]+1}\|\partial^{\alpha}S_k^{ij}(\mathbf{r}_m)\partial^{\beta}(\frac{\partial^2w_m}{\partial x_i\partial x_j})\|_0\\
=&\sum_{|\alpha|+|\beta|\leq s, |\alpha|> [\frac{n}{2}]+1}\|\partial^{\alpha-[\frac{n}{2}]-1}\partial^{[\frac{n}{2}]+1}S_k^{ij}(\mathbf{r}_m)\partial^{\beta}(\frac{\partial^2w_m}{\partial x_i\partial x_j})\|_0\\
\leq & C\sum_{|\alpha|+|\beta|\leq s, |\alpha|> [\frac{n}{2}]+1}\left[\|\partial^{[\frac{n}{2}]+1}S_k^{ij}(\mathbf{r}_m)\|_{L^\infty}\|\partial_i\partial_jw_m\|_{|\alpha|
+|\beta|-[\frac{n}{2}]-1}\right.\\
& \ \ \ \ \ \left.+\|\partial^{[\frac{n}{2}]+1}S_k^{ij}(\mathbf{r}_m)\|_{|\alpha|
+|\beta|-[\frac{n}{2}]-1}\|\partial_i\partial_jw_m\|_{L^\infty}\right]\\
\leq& C\sum_{|\alpha|+|\beta|\leq s, |\alpha|> [\frac{n}{2}]+1}\left[\|w_m\|_{|\alpha|
+|\beta|-[\frac{n}{2}]+1}+\|\partial^{[\frac{n}{2}]+1}S_k^{ij}(\mathbf{r}_m)\|_{|\alpha|
+|\beta|-[\frac{n}{2}]-1}\right]\\
\leq&  C\|w_m\|_{s-[\frac{n}{2}]+1}+C\sum_{i,j=1}^n\|(W_m)_{ij}\|_{s},
\end{align*}
where
$$
(W_m)_{ij}(x)=P_{y_iy_j}(\varepsilon^2x)+ \varepsilon^{\frac{9}{2}} (w_m)_{y_iy_j}(x).
$$
Therefore
\begin{align*}
&\|S_k^{ij}(\mathbf{r}_m)\frac{\partial^2w_m}{\partial x_i\partial x_j}\|_s\\
&\leq C\|w_m\|_{s+2}+C\|w_m\|_{s-[\frac{n}{2}]+1}+C\sum_{i,j=1}^n\|(W_m)_{ij}\|_{s}\\
&\leq C\|w_m\|_{s+2}+C\sum_{i,j=1}^n\|(W_m)_{ij}\|_{s}.
\end{align*}
Thus
\begin{align*}
&\|g_m\|_s=\|G(w_m)\|_s\\
&\leq \|G(w_0)\|_s+\int^1_0\|\frac{\partial}{\partial t}[G(t w_m)]\|_sd t \leq \|g_0\|_s+C\|w_m\|_{s+2}+C\|\partial^3 u_m\|_{s-2}.
\end{align*}
Remembering
$$
\partial_x\partial_{x_i}\partial_{{x_j}}[u_m(y)\mid_{y=\varepsilon^2 x}]=\varepsilon^4\partial_x (W_m)_{ij}(x)=\varepsilon^4\partial_x[P_{ij}(\varepsilon^2x)+\varepsilon^{\frac{9}{2}}(w_m)_{ij}(x)],
$$
we have
$$
\|g_m\|_s\leq \|g_0\|_s+C\varepsilon^4\sum_{i,j=1}^n\|P_{ij}(\varepsilon^2\ \cdot)\|_{s-1}+C\|w_m\|_{s+2}\leq C_s(\mathcal{M}(s
)+\|w_m\|_{s+2}),
$$
this completes the proof of \eqref{eq:6.11}.

We prove now \eqref{eq:6.12}. By using \eqref{eq:6.6+} and \eqref{eq:6.9}, we have
$$
\|w_{m+1}\|_{s+4}\leq \|w_{m}\|_{s+4}+\|S_m\rho_m\|_{s+4}\leq \|w_{m}\|_{s+4}+C_s\mu_m^4\|\rho_m\|_s.
$$
The \`a priori estimate \eqref{eq:5.38}  and  Sobolev imbedding theorem  yield
\begin{equation}\label{eq:6.8}
\begin{split}
\|\rho_m\|_s&\leq  C(\|g_m\|_{s}+\sum_{i,j=1}^n\|(W_{ij})\|_{s+2}\|\rho_m\|_{L^{\infty}})\\
&\leq  C(\|g_m\|_s+\sum_{i,j=1}^n\|(W_{ij})\|_{s+2}\|\rho_m\|_{[\frac{n}{2}]+1})\\
&\leq  C(\|g_m\|_s+\sum_{i,j=1}^n\|(W_{ij})\|_{s+2}\|g_m\|_{[\frac{n}{2}]+1})\\
&\leq C\Big(\mathcal{M}(s)+\|w_{m}\|_{s+2}+C\big(\sum_{i,j=1}^n\|P_{ij}(\varepsilon^2 \ \cdot)\|_{s+2}+\|w_m\|_{s+4}\big)\\
&\qquad\qquad\times \big(\mathcal{M}([\frac{n}{2}]+1)+\|w_m\|_{[\frac{n}{2}]+3}\big)\Big)\\
&\leq C\left(\mathcal{N}(s)+\|w_m\|_{s+4}\right).
\end{split}
\end{equation}
Thus we get
\begin{equation}\label{6.9+}
\|w_{m+1}\|_{s+4}\leq \|w_{m}\|_{s+4}+C_s\mu_m^4\|\rho_m\|_s\leq C\mu_m^4(\mathcal{N}(s)+\|w_m\|_{s+4}),
\end{equation}
then by using $w_0=0$ and $\mu_m=\sigma^{\gamma^m},$ the iteration of \eqref{6.9+} gives
$$
\|w_{m+1}\|_{s+4}\leq C_s^m(m+1)\sigma^{4\frac{\gamma^{m+1}-1}{\gamma-1}}\mathcal{N}(s),
$$
if we choose $C_s>1$, $C'_s=2C_s,$  and $\beta=\frac{4}{\gamma-1}$, then
$$
\|w_{m+1}\|_{s+4}\leq (C'_s)^{m+1}\mu_{m+1}^{\beta}\mathcal{N}(s).
$$
This completes the proof of \eqref{eq:6.12}.
\end{proof}

\begin{lemma}\label{lm:du}
Suppose  $\|w_l\|_{C^{[\frac{n}{2}]+4}}\leq 1$ for $l=0,1,\ldots,m$. Then for any given $s^*>s > 1$, there exists $\sigma>1, \gamma>1$ and $a>0$ such that
\begin{equation}\label{eq:6.13}
  \|g_{m+1}\|_0+ \|g_{m+1}\|_{L^\infty}\leq \mu_{m+1}^{-a}\mathcal{N}(s^*).
\end{equation}
\end{lemma}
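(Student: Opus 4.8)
The plan is to control the new error $g_{m+1}=-G(w_{m+1})$ by splitting it into a \emph{quadratic error} coming from the Taylor remainder of $G$ and a \emph{substitution error} coming from the smoothing operator $S_m$. Writing $w_{m+1}=w_m+S_m\rho_m$ and recalling that $L(w_m)\rho_m=g_m$, I would expand
\begin{equation*}
G(w_{m+1})=G(w_m)+L_G(w_m)(S_m\rho_m)+Q_m
= -g_m + L_G(w_m)(S_m\rho_m - \rho_m) + \theta_m\triangle\rho_m + Q_m,
\end{equation*}
so that
\begin{equation*}
g_{m+1}= -\,Q_m - L_G(w_m)(S_m\rho_m-\rho_m) + \theta_m\triangle\rho_m,
\end{equation*}
where $Q_m$ is the second-order Taylor remainder $Q_m=\int_0^1(1-t)\,\frac{d^2}{dt^2}G(w_m+tS_m\rho_m)\,dt$, which is quadratic in the second derivatives of $S_m\rho_m$ and whose $C^0$-bound is therefore controlled by $\|S_m\rho_m\|_{C^2}^2$ up to a factor depending on $\|D^2 w_m\|_{L^\infty}$ — this is where the hypothesis $\|w_l\|_{C^{[n/2]+4}}\le1$ is used. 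For the last two terms I would use the smoothing estimates \eqref{eq:6.6+}, \eqref{eq:6.7+} to bound $\|S_m\rho_m-\rho_m\|$ in a low norm by $\mu_m^{-(s-\text{low})}\|\rho_m\|_s$, and I would use $\theta_m=\|g_m\|_{L^\infty}$ together with the interpolation/Sobolev bound on $\|\rho_m\|_{C^2}$.

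Next I would insert the inductive bounds already available. From Lemma \ref{lemma4.3}, $\|g_m\|_s\le C_s(\mathcal M(s)+\|w_m\|_{s+2})$ and $\|w_{m+1}\|_{s+4}\le (C_s')^{m+1}\mu_{m+1}^\beta\mathcal N(s)$ with $\beta=4/(\gamma-1)$; combined with \eqref{eq:6.8} these give $\|\rho_m\|_s\le C(\mathcal N(s)+\|w_m\|_{s+4})\le C(C_s')^m\mu_m^\beta\mathcal N(s)$. Plugging these into the three pieces of $g_{m+1}$, each term acquires a factor of the form $(\text{geometric in }m)\cdot\mu_m^{\,p}\cdot\mathcal N(s^*)^{q}$ with $q\in\{1,2\}$, times a negative power $\mu_m^{-(s^*-s_1)}$ coming from the smoothing gain, where $s_1$ is a fixed low index like $[n/2]+3$ controlling the $C^2$-norm. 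Since $\mu_{m}=\sigma^{\gamma^m}$ and $\mu_{m+1}=\mu_m^\gamma$, the product of the geometric factor $(C_s')^{m}$ with $\mu_m$-powers is, for $\sigma$ large enough, dominated by $\mu_{m+1}^{-a}$ for a suitable $a>0$: one chooses $s^*-s$ large (hence the hypothesis $s^*>s$), then $\gamma>1$ close to $1$ so that $\beta$ and the various $\mu_m$ exponents are small, then $\sigma$ large to absorb $(C_s')^m=\sigma^{\gamma^m\cdot\log_{\sigma}C_s'}$ into an arbitrarily small power of $\mu_{m+1}$. Finally $\mathcal N(s^*)$ is small for $\varepsilon$ small, so $\mathcal N(s^*)^2\le\mathcal N(s^*)$, and \eqref{eq:6.13} follows with $\|g_{m+1}\|_0+\|g_{m+1}\|_{L^\infty}\le\mu_{m+1}^{-a}\mathcal N(s^*)$.

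The main obstacle is the bookkeeping of exponents: one must verify that a \emph{single} triple $(\sigma,\gamma,a)$, independent of $m$, makes every one of the three error terms satisfy the bound simultaneously, while the quadratic term forces $q=2$ and so eats two powers of the (large but fixed) quantities $\mathcal N(s^*)$ and of the $\mu_m$-exponents. Concretely, after collecting exponents the requirement becomes an inequality of the shape $\gamma\big(\beta + \max(\text{exponent contributions}) - (s^*-s_1)\big) + (\text{small}) \le -a\gamma$ together with $(C_s')^m\le \mu_{m+1}^{\,\delta}$ for $\delta$ as small as we like; the first is arranged by taking $s^*$ large relative to $s$ and $\gamma$ near $1$, the second by taking $\sigma$ large. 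One also has to keep the low-norm index $s_1=[n/2]+3$ (or $[n/2]+4$ where the quadratic estimate of Lemma \ref{lm:du} needs it) below $s$, which is why the statement is phrased for $s>1$ with $s^*>s$ and explains the appearance of $2[\tfrac n2]+5$ in Theorem \ref{main}. Everything else — the Taylor expansion of $G$, the $C^2$-estimate of $S_m\rho_m$ via \eqref{eq:6.6+}, the interpolation inequalities — is routine once the exponent inequality is set up correctly.
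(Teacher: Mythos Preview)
Your decomposition of $g_{m+1}$ into the smoothing error $L_G(w_m)(S_m\rho_m-\rho_m)$, the $\theta_m$-correction, and the quadratic Taylor remainder $Q_m$ is correct and matches the paper's split $(A)+(B)+(C)$. The gap is in how you claim to close the estimate. You assert that \emph{each} of the three pieces carries ``a negative power $\mu_m^{-(s^*-s_1)}$ coming from the smoothing gain''. That is true only for the $(S_m-I)$-term, where \eqref{eq:6.7+} applies. The quadratic term $Q_m$ and the $\theta_m$-term involve $S_m\rho_m$ (not $(S_m-I)\rho_m$) and $\rho_m$ respectively, so no smoothing gain is available there; the only control you get is
\[
\|Q_m\|_0\;\lesssim\;\mu_m^{\,2(k-2)+[\frac n2]+5}\,\|g_m\|_0^2,
\qquad
\|\theta_m\triangle S_m\rho_m\|_0\;\lesssim\;\mu_m^{2}\,\|g_m\|_{L^\infty}\,\|g_m\|_0,
\]
with \emph{positive} powers of $\mu_m$ in front. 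The factors $\|g_m\|_0$ and $\|g_m\|_{L^\infty}$ cannot be replaced by $\mathcal N(s^*)$ using Lemma~\ref{lemma4.3}; that lemma only gives the \emph{growing} bound $\|w_m\|_{s+4}\le C^{m}\mu_m^\beta\mathcal N(s)$, which makes these terms blow up, not decay.

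What the paper does---and what your sketch is missing---is to treat \eqref{eq:6.13} itself as an induction hypothesis. Setting $d_m=\max\{C\mu_m^a\|g_m\|_0,\,C\mu_m^a\|g_m\|_{L^\infty}\}$, the three estimates combine (after choosing $a,\gamma$ so that $2(k-2)+2[\frac n2]+6+a\gamma\le 2a-1$ and $s^*$ large) into the Newton-type recursion
\[
d_{m+1}\;\le\;\tfrac14\,\mathcal N(s^*)+d_m^{\,2},
\]
which closes by induction once $\varepsilon$ is small enough to make $d_0\le\tfrac12\mathcal N(s^*)$ and $\mathcal N(s^*)\le 1$. The quadratic gain $d_m^2$ from the Taylor remainder is precisely what beats the positive $\mu_m$-powers; your outline never invokes it. A secondary slip: taking $\gamma$ close to $1$ does not make $\beta=4/(\gamma-1)$ small but large, so your parameter discussion is inverted; one fixes any $1<\gamma<2$, then chooses $a$ large to satisfy the first exponent inequality, then $s^*$ large for the second, and finally $\sigma$ large to absorb the constants $(C'_{s^*})^{m}$.
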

\begin{proof}
By Taylor formula with remainder and \eqref{eq:6.9},  we consider the quadratic error of $G(w_{m+1})$
\begin{align*}
&-g_{m+1}=G(w_{m+1})=G(w_m+S_m\rho_m)\\
&=G(w_m)+L_G(w_m)S_m\rho_m+\int^1_0(1-t)\frac{\partial^2}{\partial t^2}[G(w_m+t S_m\rho_m)]d t\\
&=[G(w_m)+L(w_m)\rho_m]+L(w_m)(S_m-I)\rho_m-\theta_m\triangle S_m\rho_m\\
&\quad +\int^1_0(1- t)\frac{\partial^2}{\partial t^2}[G(w_m+ t S_m\rho_m)]d t\\
&=L(w_m)(S_m-I)\rho_m-\theta_m\triangle S_m\rho_m+\int^1_0(1- t)\frac{\partial^2}{\partial t^2}[G(w_m+ t S_m\rho_m)]d t\\
&=(A)+(B)+\int^1_0(1-t)(C)d t\,.
\end{align*}
For the last terms, we have
\begin{align*}
&(C)=\frac{\partial^2}{\partial t^2}[G(w_m+ t S_m\rho_m)]\\
=&\sum_{i,j,p,l=1}^n\frac{\partial^2S_k}{\partial r_{ij}\partial r_{pl}}(w_m+ t S_m\rho_m)
(S_m\rho_m)_{ij}(S_m\rho_m)_{pl}
\end{align*}

Noticing $\frac{\partial^2S_k}{\partial r_{ij}\partial r_{pl}}(\mathbf{r})$ is a polynomial of
order $k-2$, we have, by Sobolev imbedding theorem, \`a priori estimate \eqref{eq:5.38}, \eqref{eq:6.11} and \eqref{eq:6.6+},
$$
\| S_m\rho_m\|_{[\frac{n}{2}]+3}\leq C\mu_m^2\| \rho_m\|_{[\frac{n}{2}]+1}\leq C\mu_m^2\| g_m\|_{[\frac{n}{2}]+1}\leq C\mu_m^2(\mathcal{M}([\frac{n}{2}]+1)+\|w_m\|_{[\frac{n}{2}]+3})\leq C\mu_m^2,
$$
and
\begin{align*}
&\|\frac{\partial^2S_k}{\partial r_{ij}\partial r_{pl}}(w_m+ t S_m\rho_m)
\|_{L^\infty}
\leq \|D^2w_m+t D^2S_m\rho_m\|_{L^\infty}^{k-2}\\
& \leq  [\|D^2w_m\|_{L^\infty}+\| D^2S_m\rho_m\|_{L^\infty}]^{k-2}
\leq [1+\| S_m\rho_m\|_{[\frac{n}{2}]+3}]^{k-2}\\
 &\leq (C')^{k-2}\mu_m^{2(k-2)}.
\end{align*}
Then \eqref{eq:6.6+}   yields
\begin{align*}
\|(C)\|_0\leq &\sum_{i,j,p,l=1}^n\|\frac{\partial^2S_k}{\partial r_{ij}\partial r_{pl}}(w_m+ t S_m\rho_m)\|_{L^\infty}
\|(S_m\rho_m)_{ij}\|_{L^\infty}\|(S_m\rho_m)_{pl}\|_0\\
\leq & C^{k-2}\mu_m^{2(k-2)}\|S_m\rho_m\|_{[\frac{n}{2}]+3}\|S_m\rho_m\|_2
\leq  C^{k-2}\mu_m^{2(k-2)+[\frac{n}{2}]+5}\|\rho_m\|_0^2\\
\leq & C^{k-2}\mu_m^{2(k-2)+[\frac{n}{2}]+5}\|g_m\|_0^2.
\end{align*}
By  \eqref{eq:6.7+},  \eqref{eq:6.8} and \eqref{eq:6.12}, we have
\begin{align*}
\|(A)\|_0&=\|L(w_m)(S_m-I)\rho_m\|_0\leq C\|(S_m-I)\rho_m\|_2\\
&\leq CC_{s*}\mu_m^{-(s^*-2)}\|\rho_m\|_{s^*}\\
&\leq C'_{s*}\mu_m^{-(s^*-2)}C_{s*}^{m+1}\mu_m^{\beta}N(s^*).
\end{align*}
By \eqref{eq:6.6+} and the \`a priori estimate \eqref{eq:5.38},
$$
\|(B)\|_0=\|\theta_m\triangle S_m\rho_m\|_0\leq C\theta_m\| S_m\rho_m\|_2\leq C\theta_m\mu_m^2\| \rho_m\|_0\leq C\theta_m\mu_m^2\| g_m\|_0.
$$

Now by combining the estimates of $(A), (B)$  and $(C)$ ,  we obtain
\begin{equation}\label{eq:6.34}
\|g_{m+1}\|_0\leq C\Big(\mu_m^{-(s^*-2-\beta)}C_{s*}^{m+1}N(s^*)+\theta_m\mu_m^2\| g_m\|_0
+\mu_m^{2(k-2)+[\frac{n}{2}]+5}\|g_m\|_0^2\Big).
\end{equation}
Since $\theta_{m}=\|g_{m}\|_{L^\infty}$ by \eqref{eq:6.10}, we need to prove two estimates in \eqref{eq:6.13} together. By same computation, using Sobolev imbedding, we have
\begin{equation}\label{eq:6.35}
\|g_{m+1}\|_{L^\infty}\leq C\mu_m^{[\frac{n}{2}]+1}[\mu_m^{-(s^*-2-\beta)}C_{s*}^{m+1}\mathcal{N}(s^*)+\|g_m\|_{L^\infty}\mu_m^{2}\| g_m\|_0
+\mu_m^{2(k-2)+[\frac{n}{2}]+5}\|g_m\|_0^2].
\end{equation}

By comparing the powers of $\mu_m$ on both sides of \eqref{eq:6.34} and \eqref{eq:6.35}, we can choose $a>0$ and
large $s^*>s$ such that
\begin{equation}\label{eq:6.21}\left\{
\begin{array}{l}
2(k-2)+2[\frac{n}{2}]+6+a \gamma \le 2 a-1\\
s^*-[\frac{n}{2}]-3-\beta\ge a \gamma+1\,.
\end{array}\right.
\end{equation}
Noticing that $\mu_{m+1}=\mu_m^\gamma$, we can change \eqref{eq:6.34} and \eqref{eq:6.35} as
\begin{align*}
C\mu_{m+1}^a\|g_{m+1}\|_0\leq &C^2\Big[\mu_m^{-(s^*-2-\beta-a \gamma)}C_{s*}^{m+1}\mathcal{N}(s^*)+\|g_m\|_{L^\infty}\mu_m^{a\gamma+2}\| g_m\|_0\\
&\qquad\qquad+\mu_m^{a\gamma+2(k-2)+[\frac{n}{2}]+5}\|g_m\|_0^2\Big],\\
C\mu_{m+1}^a\|g_{m+1}\|_{L^\infty}\leq &C^2\mu_m^{[\frac{n}{2}]+1}
\Big[\mu_m^{-(s^*-2-\beta-a\gamma)}C_{s*}^{m+1}\mathcal{N}(s^*)+\|g_m\|_{L^\infty}\mu_m^{a\gamma+2}\| g_m\|_0\\
&\qquad\qquad+\mu_m^{a\gamma+2(k-2)+[\frac{n}{2}]+5}\|g_m\|_0^2\Big],
\end{align*}
from which we obtain, by using \eqref{eq:6.21} and $\mu_m>1$,
\begin{equation}\label{eq:6.24}\left\{
\begin{array}{l}
C\mu_{m+1}^a\|g_{m+1}\|_0\leq C^2\left[\mu_m^{-1}C_{s*}^{m+1}\mathcal{N}(s^*)+\|g_m\|_{L^\infty}\mu_m^{2a-1}\| g_m\|_0
+\mu_m^{2a-1}\|g_m\|_0^2\right]\\
C\mu_{m+1}^a\|g_{m+1}\|_{L^\infty}\leq C^2\left[\mu_m^{-1}C_{s*}^{m+1}\mathcal{N}(s^*)
+\|g_m\|_{L^\infty}\mu_m^{2a-1}\| g_m\|_0
+\mu_m^{2a-1}\|g_m\|_0^2\right].
\end{array}\right.
\end{equation}
Noticing $\gamma>1$, we can choose $\sigma=\sigma(s^*)>1$ so large that $\mu_m^{-1}=\sigma^{-\gamma^m}<\frac{1}{2}$ and
\begin{equation}\label{eq:6.25}
C^2\mu_m^{-1}C_{s*}^{m+1}=C^2\sigma^{-\gamma^m}C_{s^*}^{m+1}=C^2\left(\frac{C_{s^*}}{\sigma^{\frac{1}{m+1}\gamma^m}}\right)^{m+1}\le \frac{1}{4}.
\end{equation}
Inserting such   $\sigma(s^*)$ into \eqref{eq:6.24},  we have
\begin{equation*}
\left\{
\begin{array}{l}
C\mu_{m+1}^a\|g_{m+1}\|_0\leq \frac{1}{4}\mathcal{N}(s^*)+\frac{1}{2}C^2\|g_m\|_{L^\infty}\mu_m^{2a}\| g_m\|_0
+\frac{1}{2}C^2\mu_m^{2a}\|g_m\|_0^2\\
C\mu_{m+1}^a\|g_{m+1}\|_{L^\infty}\leq \frac{1}{4}\mathcal{N}(s^*)
+\frac{1}{2}C^2\|g_m\|_{L^\infty}\mu_m^{2a}\| g_m\|_0
+\frac{1}{2}C^2\mu_m^{2a}\|g_m\|_0^2.
\end{array}\right.
\end{equation*}
Set
$$
  d_{m+1}=\max\{C\mu_{m+1}^a\|g_{m+1}\|_0,\, C\mu_{m+1}^a\|g_{m+1}\|_{L^\infty}\},
$$
we get
\begin{equation*} \left\{
\begin{array}{l}
C\mu_{m+1}^a\|g_{m+1}\|_0\leq \frac{1}{4}\mathcal{N}(s^*)+d_m^2\\
C\mu_{m+1}^a\|g_{m+1}\|_{L^\infty}\leq \frac{1}{4}\mathcal{N}(s^*)+d_m^2.
\end{array}\right.
\end{equation*}
So we obtain,
\begin{equation}\label{eq:6.27}
  d_{m+1}\leq \frac{1}{4}\mathcal{N}(s^*)+d_m^2.
\end{equation}
Since
$$
\|g_0\|_0=\|G(w_0)\|_0=\|G(0)\|_0=O(\varepsilon),\,\,\, \|g_0\|_{L^\infty}=O(\varepsilon),
$$
we choose $\varepsilon(\sigma)>0$ small such that  $\mathcal{N}(s^*),\,\, \|g_0\|_{L^\infty}$ and $\|g_0\|_0$ small.
By \eqref{eq:6.24}, \eqref{eq:6.25} and $\|g_0\|_{0}\leq \|g_0\|_{s^*}\leq \mathcal{N}(s^*)$, we have
$$
d_1\leq \frac{1}{4}\mathcal{N}(s^*)+\frac{1}{4}\|g_0\|_{0}\leq \frac{1}{2}\mathcal{N}(s^*).
$$
By induction and \eqref{eq:6.27}, we see that
$$
d_{m+1}\leq \frac{1}{2}\mathcal{N}(s^*),
$$
this completes the proof of \eqref{eq:6.13}.
\end{proof}

\begin{remark}
The  estimates in Lemma \ref{lemma4.3} and \ref{lm:du}, obtained only in the special case $f=f(y)$, are also true for the general case  $f=f(y,u)$ or $f=f(y,u,Du).$  Here we only give it as an example the estimate of $\|(C)\|_0$ in the proof of Lemma \ref{lm:du}. In fact,  from \eqref{eq:6.11}, and \`a priori estimate \eqref{eq:5.38}, it follows that  if $\| w_m\|_{[\frac{n}{2}]+3}\leq 1,$
\begin{align*}
&\|w_m+t S_m\rho_m\|_{L^\infty}\leq \|w_m\|_{L^\infty}+\| S_m\rho_m\|_{L^\infty}\\
\leq& 1+\| S_m\rho_m\|_{[\frac{n}{2}]+1}
\leq  1+C\| \rho_m\|_{[\frac{n}{2}]+1}\\
\leq & 1+C\| g_m\|_{[\frac{n}{2}]+1}
\leq  1+C(\mathcal{M}([\frac{n}{2}]+1)+\| w_m\|_{[\frac{n}{2}]+3})\leq C'.
\end{align*}
and similarly, if $\| w_m\|_{[\frac{n}{2}]+4}\leq 1,$
$$
\|\nabla w_m+t \nabla S_m\rho_m\|_{L^\infty}\leq \|\nabla w_m\|_{L^\infty}+\| \nabla S_m\rho_m\|_{L^\infty}\leq C',
$$
 so we have similar estimates
$$
\|(C)\|_0\leq C\mu_m^{2(k-2)+[\frac{n}{2}]+5}\|g_m\|_0^2.
$$
for the quadratic error of  $f=f(y,u)$ or $f=f(y,u,Du).$
\end{remark}

\noindent
\textbf{Existence of $k$-convex solution of Theorem \ref{main}}

Now we employ the Nash-Moser-H\"ormander iteration to prove the existence of solution of main theorem with $s\ge 2[\frac{n}{2}]+5$.
We shall prove by induction  that, there exists $ \varepsilon_0>0$ small such that, for any $\varepsilon\in ]0,\varepsilon_0]$
\begin{equation}\label{eq:6.29}
  \|w_m\|_{s}\leq 1,\quad\forall m \in \mathds{N}.
\end{equation}
 Since $w_0=0$, we may assume that \eqref{eq:6.29} holds for $0\le l\le m$, which, by Sobolev imbedding theorem, guarantees
  the assumption of Lemma \ref{lemma4.3} and \ref{lm:du}. Interpolation inequality and \eqref{eq:6.5+} yield, for any $0\le s\leq s^*$,
$$
\|w_{m+1}\|_s\leq \sum_{l=0}^m\|S_l\rho_l\|_s\leq C_s\sum_{l=0}^m\|\rho_l\|_s\leq
C_s\sum_{l=0}^m\|\rho_l\|_{s^*}^{\frac{s}{s^*}}\|\rho_l\|_0^{1-\frac{s}{s^*}}.
$$
By \eqref{eq:6.12}, it follows that,  for $0\le l\le m$,
$$
\|\rho_l\|_{s^*}\leq  (C'_{s^*})^l\mu^\beta_{l}\mathcal{N}(s^*)
$$
with $\mu_l=\sigma^{\gamma^l}, \sigma>1, \gamma>1, \beta=\frac{4}{\gamma-1}$, and by \eqref{eq:6.13},
$$
\|\rho_l\|_{0}\leq C\|g_l\|_{0}\leq C'\mu^{-a}_{l}\mathcal{N}(s^*),
$$
thus
$$
 \|w_{m+1}\|_s\leq
C_s\sum_{l=0}^m(C'_{s^*})^{l\frac{s}{s^*}}\mu_l^{\beta\frac{s}{s^*}-a(1-\frac{s}{s^*})}
\mathcal{N}(s^*).
$$
So that we can choose $s^*$ large enough such that
$$
\beta\frac{s}{s^*}-a(1-\frac{s}{s^*})=-\tilde a<0.
$$
We choose $\varepsilon_0=\varepsilon_0(\sigma)>0$ smaller to make $\mathcal{N}(s^*)$
small enough such that for $s\ge 2[\frac{n}{2}]+5$,
$$
 \|w_{m+1}\|_{s}\leq
C_s\sum_{l=0}^\infty(C'_{s^*})^{l\frac{s}{s^*}}\mu_l^{-\tilde a}\,\mathcal{N}(s^*)\leq 1
$$
This completes the proof of \eqref{eq:6.29}.

On one hand, by \eqref{eq:6.29} there is a subsequence of $w_m$, still
denoted by itself, such that $w_m\rightarrow w$ in weak topology of  $\mathbf{H}^s(\Omega), s\ge {2[\frac{n}{2}]+5}$ and  $w_m\rightarrow w$ in $C^{[\frac{n}{2}]+4}$. Hence
$$
g_m=-G(w_m)\rightarrow -G(w)\indent \text{in}\indent  C^{[\frac{n}{2}]+2}(\Omega).
$$
On the other hand, by using \eqref{eq:6.29}, Lemma \ref{lm:du} can be applied for all $m\in \mathds{N}$, letting $m\rightarrow \infty$ in \eqref{eq:6.13} and recalling \eqref{eq:6.10}, we see that $G(w)=0$, thus
$$
u(y)=\frac{1}{2}\sum_{i=1}^{k-1}\tau_iy_i^2+P(y)+\varepsilon^{\frac{17}2}w(\varepsilon^{-2}y)\in \mathbf{H}^{s}( \Omega)
$$
is a local solution of the Hessian equation \eqref{eq:1.1}.


\section{Strict convexity of local solution}\label{section6}

In this section, we will prove that the smooth $k$-convex local-solution obtained in Section \ref{section5} is locally strict convex under the hypothesis of Theorem \ref{main}, that is, by \eqref{eq:1.3} we need to prove that, for  $ 0<t <1,\,\, y,\, z\in \Omega,\,\, y\neq z,$
\begin{equation}\label{eq:1.3+}
\sum_{i,j=1}^n \int_0^1\int_0^1u_{ij}(x(s,\mu))d\mu ds(y_i-z_i)(y_j-z_j)>0
\end{equation}
with $x(s,\mu)=(s\mu +(1-s)t)y+(s(1-\mu)+(1-s)(1-t))z$.
Recalling from \eqref{2.abd},
$$
(u_{ij})_{1\le i, j\le n}=\mathbf{r}=\left(\sum_{l=1}^{k-1}\delta_{i}^{j}\delta_j^l\tau_l+ P_{ij}(\varepsilon^2x)+
\varepsilon^{\frac{9}{2}} w_{ij}(x)
\right),
$$
we separate this matrix into two parts: one is
$$
r_{ij}=\sum_{l=1}^{k-1}\delta_{i}^{j}\delta_j^l\tau_l+ P_{ij}(\varepsilon^2 x)+ \varepsilon^{\frac{9}{2}}
w_{ij}(x),\indent 1\le i, j\le k-1,
$$
the principal term of which is $\sum_{l=1}^{k-1}\delta_{i}^{j}\delta_j^l\tau_l$ and obviously can control
the perturbation term $ P_{ij}(\varepsilon^2 x)+ \varepsilon^5
w_{ij}(x)$ for small $\varepsilon>0$.  The other  is
$$
r_{ij}=P_{ij}(\varepsilon^2x)+ \varepsilon^{\frac{9}{2}}
w_{ij}(x),\indent i\geq k  \ \ \text{or} \ \ j\geq k,
$$
for which, in order to control the perturbation term,  our idea is to prove
\begin{equation}\label{7.1}
w_{ij}(x',0)=w_{ij p}(x',0)=0, \indent k\leq p\leq n,
\end{equation}
if $i\geq k$ or $\ j\geq k$. Then the Taylor expansion with respect to $x''=(x_{k},\cdots,x_n)$
$$
w_{ij}(x)=w_{ij}(x',x'')=w_{ij}(x',0)+\sum_{p=k}^nw_{ij p}(x',0) x_p+O(|x''|^2),
$$
yields
$$
r_{ij}(x)=P_{ij}(\varepsilon^2 x)+\varepsilon^{\frac{9}{2}}O(|x''|^2), \indent i\geq k  \ \ \text{or} \ \ j\geq k,
$$
which, together with \eqref{eq:8.14++}, implies the minor matrix $(r_{ij})_{k\leq i,j\leq n} $ is strictly diagonally dominant with
$$
\left\{
\begin{array}{l}
|r_{ij}|\leq O(1){\varepsilon^4|x''|}+ O(1)\varepsilon^{\frac{9}{2}}|x''|^2, \quad  1\leq i\leq k-1, k\leq j\leq n, \\
r_{j_0j_0}(x)=P_{j_0j_0}(\varepsilon^2x)+\varepsilon^{\frac{9}{2}} w_{j_0j_0}(x)\geq \alpha \varepsilon^4|x''|^2+\sum_{i=k,i\neq j_0}^n|r_{ij_0}|,  \indent k\leq j_0\leq n.
\end{array}\right.
$$
Choose $\varepsilon>0$ small $(\varepsilon\ll \alpha)$ enough such that, for $k\leq j\leq n$,
$$
r_{jj}(x)-\sum_{i=k,i\neq j}^n|r_{ij}(x)|\geq \alpha\, \varepsilon^4|x''|^2,
$$
Then Cauchy inequality yields
\begin{align*}
&\sum_{i,j=1}^n r_{ij}\xi_i\xi_j\\
&\geq\frac{1}{2}\sum_{i=1}^{k-1}\tau_i|\xi_i|^2+2\sum_{1\leq i\leq k-1,k\leq j\leq n}O(1)[\varepsilon^4|x''|+ \varepsilon^{\frac{9}{2}}|x''|^2]\xi_i\xi_j+\sum_{i=k}^n \alpha \varepsilon^4|x''|^2|\xi_i|^2\\
&\geq \frac{1}{4}\sum_{i=1}^{k-1}\tau_i|\xi_i|^2 +\frac{1}{2}\sum_{i=k}^n\alpha \varepsilon^4|x''|^2|\xi_i|^2, \indent \forall \xi \in \mathbb{R}^n
\end{align*}
from which,  it follows  that \eqref{eq:1.3+} holds. In fact ,  setting  $\xi=y-z$ and recalling $x''=x''(s,\mu)=(s\mu +(1-s)t)y''+(s(1-\mu)+(1-s)(1-t))z''$, we have
\begin{equation*}
\begin{split}
&\sum_{i,j=1}^n \int_0^1\int_0^1u_{ij}(x)d\mu ds(y_i-z_i)(y_j-z_j)\\
&\geq \left( \frac{1}{4}\sum_{i=1}^{k-1}\tau_i|y_i-z_i|^2+\frac 12\alpha \varepsilon^4 b(t) \sum_{i=k}^n|y_i-z_i|^2\right)>0
\end{split}
\end{equation*}
with
$$
b(t)=\int^1_0\int^1_0|[s\mu +(1-s)t]y''+[s(1-\mu)+(1-s)(1-t)]z''|^2d\mu d s>0,
$$
for any $0<t<1$ and $y''\not=z''$, this inequality is true because
$$
\Big\{(s,\mu)\in [0,1]\times[0,1]; \,\,\big[s\mu +(1-s)t\big]y''+\big[s(1-\mu)+(1-s)(1-t)\big]z''=0\Big\}
$$
lies on a hyperbolic curve in the $(s,\mu)-$ plane and then  its  Lebesgue measure for $d\mu ds$ is zero. So that $u$ is strictly convex on $\Omega$.

We prove now \eqref{7.1} by the following two lemmas. From the explicit expression of $P(y)$ in \eqref{eq:8.8} together with $u(0)=0$ and $\nabla u(0)=0$, we see that $w(0)=0$ and $\nabla w(0)=0$.  Moreover, we have

\begin{lemma}\label{th:6.5}
 Let  $u$ be a solution of equation \eqref{eq:8.8++} in the form of \eqref{eq:2.abc} with $\|w\|_{C^3}\leq 1$, then
 \begin{equation}\label{eq:8.33}
 w_{ij}(x',0)=0, \indent i\geq k  \ \ \text{or} \ \ j\geq k.
\end{equation}
\end{lemma}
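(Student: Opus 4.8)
The plan is to evaluate the equation $S_k[u]=\tilde K$, together with a few of its derivatives, on the slice $\{x''=0\}$ and to read off the values $w_{ij}(x',0)$ from the resulting identities. First I would record what the ansatz \eqref{2.abd} contributes there. From the explicit formula \eqref{eq:8.8} one reads off, on $\{y''=0\}$: $P(y',0)=0$; $P_{ij}(y',0)=0$ whenever $1\le i\le k-1<k\le j$ (every monomial of such a $P_{ij}$ carries a factor $y_m$ with $m\ge k$ next to a $y'$-derivative of $\chi(\varepsilon^{-2}y')K(y',0)$); $P_{ij}(y',0)=0$ for $i\ne j$ with $k\le i,j\le n$ (there $P_{ij}=8\alpha y_iy_j$, a product of two $y''$-components); and $P_{jj}(y',0)=\chi(\varepsilon^{-2}y')K(y',0)/\big[(n-k+1)\sigma_{k-1}(\tau)\big]$. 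Moreover $R(y',0)=0$ by the decomposition of $K$ in Section \ref{section2}, so \eqref{eq:8.13} collapses on the slice to $\sigma_{k-1}(\tau)\sum_{j=k}^nP_{jj}(y',0)=\tilde K(y',0)$. Consequently, along $\{y''=0\}$ the Hessian \eqref{2.abd} equals $\mathrm{diag}\big(\tau_1,\dots,\tau_{k-1},P_{kk}(\varepsilon^2x',0),\dots,P_{nn}(\varepsilon^2x',0)\big)+\varepsilon^{9/2}\big(w_{ij}(x',0)\big)_{1\le i,j\le n}$: a fixed diagonal matrix plus an $O(\varepsilon^{9/2})$ perturbation whose $C^0$-size is controlled because $\|w\|_{C^3}\le1$.

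Next I would insert this into $S_k[u](x',0)=\tilde K(\varepsilon^2x',0)$ and use the block expansion of Lemma \ref{th:6.4} around the separated eigenvalues $\tau_1,\dots,\tau_{k-1}$. Since $S_k$ of the diagonal matrix above already equals $\tilde K(\varepsilon^2x',0)$ up to the negligible $O(\varepsilon^{20})$ coming from $\sigma_2(P_{kk},\dots,P_{nn})$, subtracting removes the main term and leaves, at first order in the perturbation, a relation of the shape $\sigma_{k-1}(\tau)\sum_{j=k}^n w_{jj}(x',0)=O(\varepsilon^{9/2})$. To pin down the remaining entries and, above all, to get genuine equalities, I would differentiate the equation in the $x''$-directions: applying $\partial_{x_p}$ and $\partial_{x_p}\partial_{x_q}$ ($p,q\ge k$) to $S_k[u]=\tilde K$ and restricting to $\{x''=0\}$ produces further relations in which the degenerate directions of $(S_k^{ij})$ — the $x'$-block, where the symbol is $O(\varepsilon)$ — drop out, while the transverse block, where the symbol is $\sigma_{k-1}(\tau)+O(\varepsilon)>0$, survives; combined with the corresponding facts for $P_{ij},P_{ijp}$ on the slice this gives a system for the slice jets of $w$ that is triangular in the order of $x''$-vanishing. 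The companion statement $w_{ijp}(x',0)=0$ of \eqref{7.1} is then handled in tandem from the once-differentiated equation.

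The real obstacle is upgrading ``$w_{ij}(x',0)$ is small'' to ``$w_{ij}(x',0)=0$'': the $k$-Hessian equation on the slice controls only one scalar combination of the slice-Hessian entries, and $k$-convexity does not close the gap either (a $k$-convex matrix with $\sigma_k=0$, $\sigma_{k-1}>0$ and separated top eigenvalues need not have a vanishing lower-right block). The route I would take is to exploit that near the slice the linearization of $S_k[\cdot]$ along $u$ is $\sigma_{k-1}(\tau)\Delta_{x''}+(\text{lower order})$ — degenerate only in $x'$ and uniformly elliptic in the transverse variables $x''$, with $\{x''=0\}$ non-characteristic — so that, comparing $u$ with an explicitly constructed solution having the same slice jets and a block-diagonal Hessian on the slice, a unique-continuation/Cauchy-uniqueness argument forces $u$ to coincide with that comparison solution in a neighbourhood of $\{x''=0\}$, which is exactly \eqref{eq:8.33}. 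This last step is most transparent when $n-k+1=1$ (the Monge--Amp\`ere direction of \cite{HZ}); the general case additionally uses the degenerate-ellipticity refinements and the full system of differentiated relations above. Propagating the $\varepsilon$-dependent smallness through those differentiated equations, and checking that the error terms are precisely absorbed by the hypothesis $\|w\|_{C^3}\le1$, is the routine but lengthy bookkeeping.
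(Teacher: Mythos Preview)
Your approach diverges substantially from the paper's, and the gap you correctly identify---upgrading ``$w_{ij}(x',0)$ small'' to ``$w_{ij}(x',0)=0$''---is resolved by the paper in a much simpler, purely algebraic way that you miss entirely.

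The paper does not differentiate the equation in the $x''$-directions, and it invokes no unique-continuation or Cauchy-uniqueness argument. Instead it reads off \emph{two} successive orders of the single scalar relation $S_k(\mathbf{r})|_{x''=0}=\tilde K|_{x''=0}$ via the block decomposition of Lemma~\ref{th:6.4}. After dividing by $\varepsilon^{9/2}$ and letting $\varepsilon\to0^+$, the first order gives $\sum_{j\ge k}w_{jj}(x',0)=0$. The decisive step is the next order: using the elementary identity $2\sigma_2(\lambda)=\sigma_1(\lambda)^2-\sum_i\lambda_i^2$ together with the just-obtained $\sigma_1=0$, the order-$\varepsilon^{9}$ contribution to $S_k(\mathbf{r})|_{x''=0}$ becomes, up to sign, a \emph{sum of squares}
\[
\sum_{i\ge k}\sum_{j\le k-1}\sigma_{k-2,j}(\tau)\,w_{ji}(x',0)^2
\;+\;\tfrac12\,\sigma_{k-2}(\tau)\sum_{j\ge k}w_{jj}(x',0)^2
\;+\;\sigma_{k-2}(\tau)\sum_{\substack{s,i\ge k\\ s\ne i}}w_{si}(x',0)^2,
\]
all coefficients being strictly positive. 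Dividing by $\varepsilon^{9}$ and again letting $\varepsilon\to0^+$ forces this positive-definite quadratic form to vanish, hence every $w_{ij}(x',0)$ with $i\ge k$ or $j\ge k$ is zero at once. No further equations are needed.

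Your proposed route through unique continuation is both heavier and incomplete as stated. Producing the ``comparison solution with block-diagonal Hessian on the slice'' amounts to solving a Cauchy problem for the fully nonlinear equation from data on $\{x''=0\}$, which is at least as hard as the original existence problem; and Cauchy uniqueness for this degenerate-elliptic operator is neither available in the paper nor easy to supply. Differentiating in $x_p$ with $p\ge k$ does produce new relations, but those involve the \emph{third}-order slice jets $w_{ijp}(x',0)$ rather than additional constraints on the second-order ones, so they do not help for the present lemma (they are precisely what drives Lemma~\ref{th:6.7}). The observation you are missing is the sum-of-squares structure at second order in the $\varepsilon^{9/2}$-expansion, combined with the passage $\varepsilon\to0^+$.
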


\begin{proof}
By \eqref{eq:8.8},
$$
P_{ij}(\varepsilon^2x)\mid_{x''=0}=0  \ \ \ \mbox { for} \ \ \ i\neq j, \,\, i\geq k \ \mbox{or} \ j\geq k ,
$$
then
$$
r_{ij}\mid_{x''=0}=\varepsilon^{\frac{9}{2}}w_{ij}(x',0),\,\,\, i\neq j,\,\, i\geq k \ \ \ \text{or} \ \ \ j\geq k.
$$
and
$$
\sum_{i=k}^n\sum_{j=1}^{k-1}r_{ji}^2(x',0)\sigma_{k-2,j}(\lambda_1,\ldots,\lambda_{k-1})=O(1)\varepsilon^9.
$$
$$
P_{ii}(\varepsilon^2x',0)=O(1)\varepsilon^8 \ \ \ \mbox { for} \ \ \ k\leq i\leq n,
$$
thus, for $ k\leq j\leq n$,
$$
r_{jj}(x',0)=P_{jj}(\varepsilon^2 x',0)+\varepsilon^{\frac{9}{2}}w_{jj}(x',0)=\varepsilon^{8}O(1)+\varepsilon^{\frac{9}{2}}w_{jj}(x',0).
$$
Inserting the above estimates into $S_k(\mathbf{r})=\text{\ding {172}}+\text{\ding {173}}+\text{\ding {174}}$ in Lemma \ref{th:6.4} and then multiplying both sides by $\varepsilon^{-\frac{9}{2}}$, we obtain
\begin{equation*}
\sigma_{k-1}(\lambda_1,\ldots,\lambda_{k-1})\sum_{j=k}^nw_{jj}(x',0)=\varepsilon^{\frac{9}{2}}O(1).
\end{equation*}
Letting $\varepsilon \rightarrow 0^+$, we get
$$
\sum_{j=k}^nw_{jj}(x',0)=\sigma_1(w_{kk}(x',0),w_{k+1,k+1}(x',0),\ldots,w_{nn}(x',0))=0.
$$
Using now the  identity
$$
 2\sigma_2(\lambda)=[\sigma_1(\lambda)]^2-\sum_{i=1}^m\lambda_i^2, \ \ \forall  \lambda=(\lambda_1,\cdots,\lambda_m)\in \mathbb{R}^m,
$$
it follows that
\begin{equation*}
\begin{split}
  &2\sigma_2(r_{kk}(x',0),r_{k+1,k+1}(x',0),\ldots,r_{nn}(x',0))\\
  =&[\sum_{j=k}^nr_{jj}(x',0)]^2-\sum_{j=k}^n[r_{jj}(x',0)]^2\\
  =&[\varepsilon^{8}O(1)+\varepsilon^{\frac{9}{2}}\sum_{j=k}^nw_{jj}(x',0)]^2-\sum_{j=k}^n[\varepsilon^{8}O(1)+\varepsilon^{\frac{9}{2}}w_{jj}(x',0)]^2\\
  =&O(1)\varepsilon^{16}-\sum_{j=k}^n[O(1)\varepsilon^{8}+\varepsilon^{\frac{9}{2}}w_{jj}(x',0)]^2.
\end{split}
\end{equation*}
 Multiplying by $\varepsilon^{-9}$ on both sides of  $S_k(\mathbf{r})=\text{\ding {172}}+\text{\ding {173}}+\text{\ding {174}}$ in Lemma \ref{th:6.4},  taking $x''=0$ and letting $\varepsilon\rightarrow 0$, we obtain
$$
\sum_{i=k}^n\sum_{j=1}^{k-1} w_{ji}^2(x',0)=\sum_{j=k}^n[w_{jj}(x',0)]^2= \sum_{i=k}^n\sum_{s=k,s\neq i}^n w_{si}^2(x',0)=0
$$
and then \eqref{eq:8.33} is true.
\end{proof}

Using now  \eqref{eq:8.33},  the Taylor expansion of $r_{ij}=P_{ij}(\varepsilon^2x)+\varepsilon^{\frac{9}{2}}w_{ij}(x)$
for $i\geq k$ or $j\geq k$ is of the following version
$$
 r_{ij}(x)=P_{ij}(\varepsilon^2x)+\varepsilon^{\frac{9}{2}}\sum_{p=k}^nw_{ijp}(x',0)x_p+
\frac{1}{2}\varepsilon^{\frac{9}{2}}\sum_{p,q=k}^nw_{ijpq}(x',0)x_px_q+O(1)\varepsilon^{\frac{9}{2}}|x''|^3,
$$
where  $\|w\|_{C^{[\frac{n}{2}]+3}}\leq 1$ is required. Similar to the proof of  Lemma \ref{th:6.5}, we can obtain
$$
\sum_{i=k}^nw_{iip}(x',0)=\sum_{i=k}^nw_{iipp}(x',0)=0, \quad k\leq p\leq n.
$$
And also :
\begin{lemma}\label{th:6.7}
 Let  $u$ be a solution of equation \eqref{eq:8.8++} in the form of \eqref{eq:2.abc}, and $\|w\|_{C^{[\frac{n}{2}]+4}}\leq 1 $, then
$$
w_{ijp}(x',0)=0, \indent i\geq k  \ \ \text{or} \ \ j\geq k,\,\, k\leq p\leq n.
$$
\end{lemma}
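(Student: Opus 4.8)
The plan is to follow the proof of Lemma~\ref{th:6.5} almost verbatim, but carried one Taylor order higher in $x''$. First I would record the input from Lemma~\ref{th:6.5}: $w_{ij}(x',0)=0$ whenever $i\ge k$ or $j\ge k$, and, directly from \eqref{eq:8.8}, $P_{ij}(\varepsilon^2x',0)=0$ for $i\ne j$ with $i\ge k$ or $j\ge k$, $P_{ij}(\varepsilon^2x',0)=0$ for $i,j\le k-1$, and $P_{jj}(\varepsilon^2x',0)=O(1)\varepsilon^{8}$ for $k\le j\le n$. Hence $\mathbf r(x',0)$ is block diagonal, with $(k-1)\times(k-1)$ block $\mathrm{diag}(\tau)+O(\varepsilon^{9/2})$, $(n-k+1)\times(n-k+1)$ block of size $O(\varepsilon^{8})$, and vanishing mixed block; and $(S_k^{ij}(\mathbf r))(x',0)$ shares this block structure since it is simultaneously diagonalizable with $\mathbf r(x',0)$.

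Next I would insert the refined expansion of $r_{ij}$ displayed just before the lemma into $S_k(\mathbf r)=\tilde K$, written as $S_k(\mathbf r)=\text{\ding{172}}+\text{\ding{173}}+\text{\ding{174}}$ (Lemma~\ref{th:6.4}), fix $k\le p\le n$, and compare, in $x''$ about $x''=0$, first the coefficient of $x_p$ and then that of $x_p^2$ on both sides (equivalently apply $\partial_{x_p}$, then $\partial_{x_p}^2$, and set $x''=0$). In the $x_p$-comparison, since $r_{ji}(x',0)=0$ for $j\le k-1<k\le i$ and $\partial_{x_p}\tilde K|_{x''=0}=O(\varepsilon^{6})$, the only order-$\varepsilon^{9/2}$ term comes from $\text{\ding{172}}$ and equals $\sigma_{k-1}(\tau)\,\varepsilon^{9/2}\sum_{j=k}^n w_{jjp}(x',0)$; dividing by $\varepsilon^{9/2}$ and letting $\varepsilon\to0^+$ gives $\sum_{j=k}^n w_{jjp}(x',0)=0$. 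The leading order $\varepsilon^{9/2}$ of the $x_p^2$-comparison is treated the same way, using in addition the identity $\sigma_{k-1}(\tau)\sum_{j\ge k}P_{jj}=\tilde K-\chi R$ of \eqref{eq:8.13} together with $R(y',0)=\partial_{y_p}^2R(y',0)=0$ to cancel the $P$-generated part of the left side against $\partial_{x_p}^2\tilde K|_{x''=0}$; it yields $\sum_{j=k}^n w_{jjpp}(x',0)=0$.

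With $\sum_{j\ge k}w_{jjp}(x',0)=\sum_{j\ge k}w_{jjpp}(x',0)=0$ in hand, I would then read off the order-$\varepsilon^{9}$ part of the $x_p^2$-comparison. Exactly as in Lemma~\ref{th:6.5}, using the Newton identity $2\sigma_2(\mu)=\sigma_1(\mu)^2-\sum_i\mu_i^2$ applied to $(r_{kk},\dots,r_{nn})$, the structure of $\text{\ding{173}}$, and $\partial_{x_p}r_{ji}|_{x''=0}=\varepsilon^{9/2}w_{jip}(x',0)+O(\varepsilon^{6})$ for $j\le k-1$, $\partial_{x_p}r_{si}|_{x''=0}=\varepsilon^{9/2}w_{sip}(x',0)$ for distinct $s,i\in\{k,\dots,n\}$, one is led to an identity of the form
\[
\sum_{i=k}^n\sum_{j=1}^{k-1}\sigma_{k-1,j}(\tau)\,w_{jip}^2(x',0)
+\tfrac12\sigma_{k-2}(\tau)\sum_{j=k}^n w_{jjp}^2(x',0)
+\sigma_{k-2}(\tau)\sum_{i=k}^n\sum_{\substack{s=k\\ s\ne i}}^n w_{sip}^2(x',0)=0 .
\]
Since $\sigma_{k-1,j}(\tau)=\prod_{l\le k-1,\,l\ne j}\tau_l>0$ and $\sigma_{k-2}(\tau)>0$, every summand vanishes; by the symmetry $w_{ijp}=w_{jip}$ this is precisely $w_{ijp}(x',0)=0$ for $i\ge k$ or $j\ge k$ and all $k\le p\le n$.

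The step I expect to be the main obstacle is the error bookkeeping in the $x_p^2$-comparison: one must verify that every contribution which is not one of the three squares above is either exactly cancelled — this is where \eqref{eq:8.13}, $R(y',0)=0$, $\partial_{y_p}^2R(y',0)=0$ and the high-order vanishing $\tilde K(\varepsilon^2x',0)=O(1)\varepsilon^{10}$ enter — or is of order $o(\varepsilon^{9})$; the intermediate identities $\sum_j w_{jjp}(x',0)=\sum_j w_{jjpp}(x',0)=0$ are what make the delicate cross terms harmless, in the same spirit as $\sum_j w_{jj}(x',0)=0$ did in Lemma~\ref{th:6.5}. The hypothesis $\|w\|_{C^{[n/2]+4}}\le1$ is used to justify the second-order Taylor expansion of $w_{ij}$ in $x''$ with a controlled remainder.
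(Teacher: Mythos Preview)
Your proposal is correct and follows essentially the same route as the paper. The paper gives only a sketch: it records the refined Taylor expansion of $r_{ij}$ in $x''$, states that ``similar to the proof of Lemma~\ref{th:6.5}'' one obtains $\sum_{i=k}^n w_{iip}(x',0)=\sum_{i=k}^n w_{iipp}(x',0)=0$, and then asserts the lemma; you have simply written out the two comparisons (at orders $\varepsilon^{9/2}$ and $\varepsilon^{9}$ in the $x_p$- and $x_p^2$-coefficients of $S_k(\mathbf r)=\tilde K$) and the sum-of-squares argument that the paper leaves implicit.
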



\section {Appendix}
\label {section7}

In this section, we will estimate the eigenvalues and eigenvectors of the following Hessian matrix
$$
\mathbf{r}=(r_{ij})_{1\le i, j\le n}=\left(\sum_{l=1}^{k-1}\delta_{i}^{j}\delta_j^l\tau_l+\varepsilon \tilde w_{ij}(x)\right)_{1\leq i, j\leq n}
$$
which is a small perturbation of diagonal matrix $\left(\sum_{l=1}^{k-1}\delta_{i}^{j}\delta_j^l\tau_l\right)_{1\leq i, j\leq n}$.
 For any function $\tilde w\in C^2$, we can find  out an orthogonal matrix
$T(x,\varepsilon)$ satisfying
\begin{equation*}
 T(x,\varepsilon)\,\mathbf{r}\,\,\Prefix^{t}T(x,\varepsilon)=\textup{diag}\left[\lambda_{1}(x,\varepsilon),
\lambda_{2}(x,\varepsilon),\ldots,\lambda_{n}(x,\varepsilon)\right],
\end{equation*}
Let
\begin{gather*}
T(x,\varepsilon)=
\begin{pmatrix}
T_1\\
T_2\\
\vdots\\
T_n
\end{pmatrix},\ \ \ T(x,\varepsilon)\mid_{\varepsilon=0}=
Id
\end{gather*}
with $T_i=(T_{i1},T_{i2},\ldots,T_{in})$ the corresponding unit eigenvectors of $\lambda_i, i=1,2,\ldots,n$.
In \cite{HZ}, there also exists such an orthogonal matrix
$T(x,\varepsilon)$, the  estimates of good regularity  for all of its entries can be easily
obtained because all of its eigenvalues are different from each
other. Now we can only give the estimates
of $T_{i}$ for $1\leq i \leq k-1$, because in our case
$\lambda_j (x,\varepsilon)$ ($ k\leq j\leq n$)  are around
zero and there is no distinct gap among them, they are not necessarily smooth in $x$ and
$\varepsilon,$ so are the corresponding eigenvectors $T_i (x, \varepsilon) \, (k\leq i\leq n).$
 But the following estimates are enough for us.

\begin{proposition}\label{lm:matrix}
Suppose that $\tilde w$ is smooth and $\|\tilde w\|_{C^{3}}\leq 1$.
Then, $T_i(x,\varepsilon), \ \ 1\leq i\leq k-1$, is smooth in $(x,\varepsilon)\in
\overline\Omega\times[0,\varepsilon_{0}]$ for some positive
$0<\varepsilon_{0}\ll 1$, and
\begin{align}\label{eq:4.6}
&\sum_{i=1}^{k-1}|\lambda_{i}-\tau_i|\leq C\varepsilon \sum_{i,j=1}^n|\tilde w_{ij}(x)|,\indent\sum_{i=k}^n|\lambda_{i}|\leq C\varepsilon^{\frac{1}{2}}(\sum_{i,j=1}^n|\tilde w_{ij}(x)|)^{\frac{1}{2}},\\
\label{eq:4.7}
&\sum_{i=1}^{k-1}|T_{ii}(x,\varepsilon)-1|
+\sum_{i=1}^{k-1}\sum_{j=1,j\neq i}^n|T_{ij}(x,\varepsilon)|\leq C\varepsilon\sum_{i,j=1}^n|\tilde w_{ij}(x)|,\\
\label{eq:4.41}
&\sum_{i=1}^{k-1}\sum_{j=1}^n |D T_{ij}(x,\varepsilon)|\leq C\varepsilon\sum_{i,j,l=1}^n|\tilde w_{ijl}(x)|
\end{align}
with $C$ independent of $\varepsilon$ and $\tilde w$.

Moreover, if  $\tilde w(x)$ is periodic in $x_i \ \ (1\leq i\leq k-1)$, then so is each  of  $T_{ij}$,
while  $T_{ij}$ has the same regularity as $D^2\tilde w$ and is $C^\infty$ in $\varepsilon$ for $1\leq i\leq k-1, 1\leq j\leq n$.
\end{proposition}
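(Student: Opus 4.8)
The plan is to view $\mathbf{r}=\mathbf{r}(x,\varepsilon)=D+\varepsilon E(x)$ as an analytic perturbation in $\varepsilon$ of the diagonal matrix $D=\mathrm{diag}(\tau_1,\dots,\tau_{k-1},0,\dots,0)$, where $E(x)=(\tilde w_{ij}(x))_{1\le i,j\le n}$ satisfies $\|\varepsilon E(x)\|\le C\varepsilon$ under $\|\tilde w\|_{C^2}\le1$, and to exploit that $\tau_1>\dots>\tau_{k-1}>0$ are \emph{simple} eigenvalues of $D$ separated both from each other and from the $(n-k+1)$-fold eigenvalue $0$. First I would fix, for each $1\le i\le k-1$, a small circle $\Gamma_i\subset\mathbb C$ centred at $\tau_i$ of radius $\rho<\tfrac12\min\{\tau_{k-1},\ \min_{j\ne i}|\tau_i-\tau_j|\}$, and choose $\varepsilon_0>0$ so small that for all $\|\tilde w\|_{C^3}\le1$ and $0\le\varepsilon\le\varepsilon_0$ the resolvent $(\zeta I-\mathbf{r})^{-1}$ exists and is bounded uniformly for $\zeta\in\Gamma_i$. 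Then the Riesz projection $P_i(x,\varepsilon)=\frac{1}{2\pi\sqrt{-1}}\oint_{\Gamma_i}(\zeta I-\mathbf{r})^{-1}\,d\zeta$ is a rank-one orthogonal projection (the rank is locally constant in $\varepsilon$, equal to $1$ at $\varepsilon=0$), it expands as a convergent series $P_i=\sum_{m\ge0}\varepsilon^mA_m(x)$ with each $A_m(x)$ a polynomial in the entries $\tilde w_{ij}(x)$, and hence $\lambda_i(x,\varepsilon)=\mathrm{tr}(\mathbf{r}P_i)$ together with the normalised eigenvector $T_i=P_ie_i/|P_ie_i|$ (sign fixed by $T_i|_{\varepsilon=0}=e_i$) are smooth in $(x,\varepsilon)\in\overline\Omega\times[0,\varepsilon_0]$ and agree with the stated values at $\varepsilon=0$.

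Next I would read off the quantitative bounds from the first-order terms. First-order perturbation gives $\lambda_i(x,\varepsilon)=\tau_i+\varepsilon\,\tilde w_{ii}(x)+O(\varepsilon^2\|E\|^2)$, which is the first inequality in \eqref{eq:4.6}, and $T_{ij}(x,\varepsilon)=\varepsilon\,\tilde w_{ij}(x)/(\tau_i-\tau_j^{(0)})+O(\varepsilon^2\|E\|^2)$ for $j\ne i$, where $\tau_j^{(0)}$ denotes the $j$-th eigenvalue of $D$ (so $\tau_j^{(0)}=0$ for $j\ge k$); since these denominators are bounded below by the spectral gap, summation yields $\sum_{i=1}^{k-1}\sum_{j\ne i}|T_{ij}|\le C\varepsilon\sum_{i,j}|\tilde w_{ij}|$, and because each $T_i$ is a unit vector, $|T_{ii}-1|\le\sum_{j\ne i}T_{ij}^2=O(\varepsilon^2\|E\|^2)\le C\varepsilon\sum_{i,j}|\tilde w_{ij}|$, which is \eqref{eq:4.7}. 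For the clustered eigenvalues I would avoid perturbation theory altogether and compare traces: $\sum_{i=k}^n\lambda_i^2=\mathrm{tr}(\mathbf{r}^2)-\sum_{i=1}^{k-1}\lambda_i^2$, and since $\mathrm{tr}(\mathbf{r}^2)=\sum_{i=1}^{k-1}\tau_i^2+2\varepsilon\sum_{i=1}^{k-1}\tau_i\tilde w_{ii}+\varepsilon^2\|E\|_F^2$ while $\sum_{i=1}^{k-1}\lambda_i^2=\sum_{i=1}^{k-1}\tau_i^2+2\varepsilon\sum_{i=1}^{k-1}\tau_i\tilde w_{ii}+O(\varepsilon^2\|E\|^2)$ by the expansion just obtained, the leading terms cancel and $\sum_{i=k}^n\lambda_i^2\le C\varepsilon^2\|E\|^2$; Cauchy--Schwarz then gives the second inequality in \eqref{eq:4.6} (in fact with $\varepsilon$ rather than $\varepsilon^{1/2}$, which is more than enough).

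For the derivative estimate \eqref{eq:4.41}, requiring $\|\tilde w\|_{C^3}\le1$, I would differentiate the Riesz integral: from $D_x(\zeta I-\mathbf{r})^{-1}=(\zeta I-\mathbf{r})^{-1}(D_x\mathbf{r})(\zeta I-\mathbf{r})^{-1}$ and $D_x\mathbf{r}=\varepsilon(D_x\tilde w_{ij})$, the uniform bound on the resolvent over $\Gamma_i$ forces $\|D_xP_i(x,\varepsilon)\|\le C\varepsilon\sum_{i,j,l}|\tilde w_{ijl}(x)|$; differentiating $T_i=P_ie_i/|P_ie_i|$ (whose denominator stays bounded below for small $\varepsilon$) propagates the same bound to $D_xT_{ij}$, giving \eqref{eq:4.41}. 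The decisive point is that the prefactor $\varepsilon$ is genuinely there, because $T_i(x,0)=e_i$ carries no $x$-dependence, so $D_xT_i$ must vanish to first order as $\varepsilon\to0$. The remaining assertions are bookkeeping: each entry of $\mathbf{r}$ is periodic in $x_1,\dots,x_{k-1}$ whenever $\tilde w$ is, and $P_i,\lambda_i,T_i$ are fixed analytic functions of those entries and of $\varepsilon$, hence periodic, of class $C^\infty$ in $\varepsilon$, and of exactly the regularity of $\tilde w_{ij}=D^2\tilde w$ in $x$.

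I expect the only real obstacle to be this first step — producing contours that isolate each $\tau_i$ from the other $\tau_j$'s and from the zero-cluster, and choosing $\varepsilon_0$ uniformly in $x$ and in $\tilde w$ subject to $\|\tilde w\|_{C^3}\le1$. Once the spectral gap is secured, everything else reduces to standard resolvent expansions. It is essential that no smoothness is claimed for $\lambda_i$ or for an eigenvector with $i\ge k$: when $k<n$ the eigenvalues near $0$ form a genuine cluster with no gap, so only the scalar quantity $\sum_{i=k}^n|\lambda_i|$ — controllable by the trace identity above without any spectral separation — can appear in the statement.
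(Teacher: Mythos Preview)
Your proposal is correct but proceeds by a genuinely different route from the paper. The paper works entirely by elementary means: for each $1\le i_0\le k-1$ it evaluates the characteristic polynomial $F(t)=\det(\mathbf r-tI)$ at $t=(1\pm\delta)\tau_{i_0}$, applies the intermediate value theorem to locate a root $\lambda_{i_0}$ in that interval, and then solves $F(\lambda_{i_0})=0$ algebraically for $\tau_{i_0}-\lambda_{i_0}$; for the eigenvectors it carries out explicit Gaussian elimination on $(\mathbf r-\lambda_1 I)T_1=0$, reducing the matrix to a form from which each $T_{1j}$ is read off as $T_{11}\cdot R_{j1}(x,\varepsilon)/(\tau_1-\tau_j-R_{jj}(x,\varepsilon))$. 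The estimate for the clustered eigenvalues is obtained, as in your argument, from the identity $2\sigma_2(\lambda)=\sigma_1(\lambda)^2-\sum\lambda_i^2$, though the paper tracks the remainder only as $O(\varepsilon\sum|\tilde w_{ij}|)$ and hence states the weaker $\varepsilon^{1/2}$ bound.

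Your Riesz--projection approach buys smoothness in $(x,\varepsilon)$ essentially for free, since $P_i$ is manifestly analytic in the entries of $\mathbf r$, and it yields the sharper bound $\sum_{i\ge k}|\lambda_i|\le C\varepsilon\|E\|$ by cancelling the first-order terms in $\mathrm{tr}(\mathbf r^2)$ against those in $\sum_{i<k}\lambda_i^2$. The paper's hands-on method, by contrast, needs no contour integration or operator theory and produces closed-form expressions for $T_{ij}$ that make the inheritance of periodicity and of the regularity of $D^2\tilde w$ immediately visible. Both arguments hinge on the same structural fact you identify as decisive: the simple eigenvalues $\tau_1,\dots,\tau_{k-1}$ are uniformly separated from one another and from the zero cluster, while nothing is (or can be) claimed for the individual $\lambda_i$ or $T_i$ with $i\ge k$.
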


\begin{proof} Noting
$$
F(t)=\det (\mathbf{r}-t \, \mathbf{I}).
$$
Now we denote $R(x,\varepsilon)$, $R_i(x,\varepsilon)$ and $R_{ij}(x,\varepsilon)$ as the different
functions, which are smooth in $x$ , $\varepsilon$ and $\tilde w,$  with
the properties
\begin{equation}\label{eq:4.39}
  |R(x,\varepsilon)|+ |R_i(x,\varepsilon)|+|R_{ij}(x,\varepsilon)|\leq C\varepsilon \sum_{i,j=1}^n|\tilde w_{ij}(x)|
\end{equation}
and $C$ being independent of $x$ and $\varepsilon$. Firstly, using the condition $\|\tilde w\|_{C^{2}}\leq 1$ and
$0<\varepsilon_{0}\ll 1$, we have
$$
F(t)=\prod_{i=1}^{k-1}(\tau_i-t)t^{n-k+1}+R(x,\varepsilon).
$$
Noticing $\tau_1>\tau_2>\ldots>\tau_{k-1}>0$, we take $\delta=\frac{1}{4}\min_{1\leq i\leq k-2}(1-\frac{\tau_{i+1}}{\tau_i},\frac{\tau_{i}}{\tau_{i+1}}-1)>0$ if $k>2$ and $\delta=\frac{1}{4}$ if $k=2$.
For fixed $i_0$ with $1\leq i_0\leq k-1$,
we have
$$
F((1-\delta)\tau_{i_0})=\prod_{i=1}^{i_0-1}(\tau_i-(1-\delta)\tau_{i_0})\delta\tau_{i_0}
\prod_{i=i_0+1}^{k-1}(\tau_i-(1-\delta)\tau_{i_0})
((1-\delta)\tau_{i_0})^{n-k+1}+R_1(x,\varepsilon)
$$
and
$$
F((1+\delta)\tau_{i_0})=-\prod_{i=1}^{i_0-1}(\tau_i-(1+\delta)\tau_{i_0})
\delta\tau_{i_0}
\prod_{i=i_0+1}^{k-1}(\tau_i-(1+\delta)\tau_{i_0})
((1+\delta)\tau_{i_0})^{n-k+1}+R_2(x,\varepsilon).
$$
By the choice of $\delta$, we have
$$
\tau_i-(1+\delta)\tau_{i_0}>\delta\tau_{i_0-1},\indent 1\leq i\leq i_0-1
$$
and
$$
\tau_i-(1-\delta)\tau_{i_0}<-\delta\tau_{i_0+1},\indent i_0+1\leq i\leq k-1,
$$
therefore, when $0<\varepsilon\ll \delta\tau_{k-1},$ we obtain
$$
F((1-\delta)\tau_{i_0})F((1+\delta)\tau_{i_0})<0
$$
and, by virtue of intermediate value theorem, there exists an
eigenvalue, denoted by $\lambda_{i_0},$ such that
$$
(1-\delta)\tau_{i_0}<\lambda_{i_0}<(1+\delta)\tau_{i_0},\indent F(\lambda_{i_0})=0,
$$
which yields
$$
\tau_i-\lambda_{i_0}\geq\tau_i-(1+\delta)\tau_{i_0}>\delta\tau_{k-1},\indent 1\leq i\leq i_0-1
$$
and
$$
\tau_{i}-\lambda_{i_0}\leq\tau_i-(1-\delta)\tau_{i_0}<-\delta\tau_{k-1},\indent i_0+1\leq i\leq k-1.
$$
From $0=F(\lambda_{i_0})=\prod_{i=1}^{i_0-1}(\tau_i-\lambda_{i_0})
(\tau_{i_0}-\lambda_{i_0})\prod_{i=i_0+1}^{k-1}(\tau_i-\lambda_{i_0})
(\lambda_{i_0})^{n-k+1}+R_3(x,\varepsilon),$
it follows that
$$
\tau_{i_0}-\lambda_{i_0}=-\frac{R_3(x,\varepsilon)}
{\prod_{i=1}^{i_0-1}(\tau_i-\lambda_{i_0})\prod_{i=i_0+1}^{k-1}(\tau_i-\lambda_{i_0})
(\lambda_{i_0})^{n-k+1}}
$$
and then
$$
\lambda_{i_0}=\tau_{i_0}+R_4(x,\varepsilon).
$$
Now we have proved that there are eigenvalues $(\lambda_i)_{i=1}^{k-1}$ such that
$$
\lambda_i=\tau_i+R(x,\varepsilon), \indent 1\leq i\leq k-1.
$$
Let us  express all the eigenvalues as $(\lambda_i)_{i=1}^{n}.$ Then, since $\|\tilde{w}\|_{C^3}\leq 1,$
$$
S_1(r)=\sum_{i=1}^{k-1}\tau_i +R(x,\varepsilon),\,\,\,
S_2(r)=\sigma_2(\tau_1,\ldots,\tau_{k-1})+R(x,\varepsilon).
$$
Since $S_k(r)$ is invariant under orthogonal transformation, then
$$
\sum_{i=1}^{k-1}\tau_i+R(x,\varepsilon)=
S_1(\mathbf{r})=\sigma_1(\lambda)=\sum_{i=1}^{k-1}\lambda_i+\sum_{i=k}^n\lambda_i
$$
and
$$
\sigma_2(\tau_1,\ldots,\tau_{k-1})+R(x,\varepsilon)=S_2(\mathbf{r})=\sigma_2(\lambda).
$$
from which, using
$$
2\sigma_2(\lambda)=\sigma_1(\lambda)^2-\sum_{i=1}^n\lambda_i^2.
$$
we see that
\begin{equation*}
   \sigma_2(\lambda)
   = \sigma_2(\tau_1,\ldots,\tau_{k-1})-\frac{1}{2}\sum_{i=k}^n\lambda_i^2+R(x,\varepsilon)
 \end{equation*}
and then
$$
\sum_{i=k}^n\lambda_i^2=R(x,\varepsilon),
$$
which implies
\begin{equation}\label{eq:4.8}
|\lambda_i|\leq C\varepsilon^{\frac{1}{2}}(\sum_{i,j=1}^n|\tilde w_{ij}(x)|)^{\frac{1}{2}},\indent k\leq i\leq n.
\end{equation}
This completes the proof of \eqref{eq:4.6}.

Now we pass to prove \eqref{eq:4.7}. Let $T_1$ be the eigenvector
corresponding to $\lambda_1,$ then $T_1$ satisfies the linear
equation $\mathbf{r}T_1-\lambda_1T_1=0$ and $\mbox{rank}(\mathbf{r}-\lambda_1 \mathbf{I})\leq n-1.$
To solve $T_1,$ we will use the Gaussian elimination procedure. Noticing $\lambda_i=\tau_i+R(x,\varepsilon)$
 for $1\leq i\leq k-1$, we can write the matrix $\mathbf{r}-\lambda_1 \mathbf{I}$ as
 \begin{gather*}
\begin{pmatrix}
R_{11}&R_{12}&\ldots&R_{1,k-1}&R_{1k}&\ldots& R_{1n}\\
R_{21}&R_{22}+\tau_2-\tau_1&\ldots&R_{2,k-1}&R_{2,k}&\ldots&R_{2,n}\\
\vdots&\vdots&\vdots&\vdots&\vdots&\vdots\\
R_{k-1,1}& R_{k-1,2}&\ldots& R_{k-1,k-1}+\tau_{k-1}-\tau_1&R_{k-1,k}&\ldots&R_{k-1,n}\\
R_{k,1}& R_{k,2}&\ldots& R_{k,k-1}&
R_{k,k}-\tau_1&\ldots&R_{k,n}\\
\vdots&\vdots&\vdots&\vdots&\vdots&\vdots\\
R_{n,1}&R_{n,2}&\ldots&R_{n,k-1}&R_{n,k}&\ldots&R_{n,n}-\tau_1.
\end{pmatrix}
\end{gather*}
Since each $R_{jj}$, $2\leq j\leq k-1$, is dominated by $\tau_j-\tau_1$ and each $R_{ii}$,
$k\leq i\leq n$, is dominated by $\tau_1$, then $\mbox{rank}(\mathbf{r}-\lambda_1 \mathbf{I})=n-1$.
 Taking the (row) elementary operations repeatedly, we can transform $\mathbf{r}-\lambda_1 \mathbf{I}$ and still denote it as
\begin{gather*}
\begin{pmatrix}
R_{11}&0&\ldots&0&0&\ldots& 0\\
R_{21}&R_{22}+\tau_2-\tau_1&\ldots&0&0&\ldots&0\\
\vdots&\vdots&\vdots&\vdots&\vdots&\vdots\\
R_{k-1,1}&0&\ldots&R_{k-1,k-1}+\tau_{k-1}-\tau_1&0&\ldots&0\\
R_{k,1}& 0&\ldots& 0&
R_{k,k}-\tau_1&\ldots&0\\
\vdots&\vdots&\vdots&\vdots&\vdots&\vdots\\
R_{n,1}&0&\ldots&0&0&\ldots&R_{n,n}-\tau_1.
\end{pmatrix}
\end{gather*}

Since $\mbox{rank}(\mathbf{r}-\lambda_1 \mathbf{I})=n-1$, we see that $R_{11}\equiv 0$ and the solutions of the equation  $\mathbf{r}T_1-\lambda_1T_1=0$  is in the form
\begin{gather}\label{eq:4.42}
\begin{pmatrix}
T_{11}\\
T_{12}\\
\vdots\\
T_{1,k-1}\\
T_{1,k}\\
\vdots\\
T_{1n}\end{pmatrix}=T_{11}\begin{pmatrix}
1\\
\frac{R_{21}(x,\varepsilon)}{\tau_1-\tau_2-R_{22}(x,\varepsilon)}\\
\vdots\\
\frac{R_{k-1,1}(x,\varepsilon)}{\tau_1-\tau_{k-1}-R_{k-1,k-1}(x,\varepsilon)}\\
\frac{R_{k,1}(x,\varepsilon)}{\tau_1-R_{k-1,k-1}(x,\varepsilon)}\\
\vdots\\
\frac{R_{n,1}(x,\varepsilon)}{\tau_1-R_{nn}(x,\varepsilon)}\end{pmatrix},\ \  \mbox{with} \ \  T_{11}\neq 0,
\end{gather}
and each $R_{ij}(x,\varepsilon)$ has the property \eqref{eq:4.39}.
 Because we need  $T_1(x,\varepsilon)\mid_{\varepsilon=0}=(1,0,\ldots,0)$, we can choose suitable
$T_{11}>0$ such that $\|T_1\|=1,$ then $T_1=(T_{11},T_{12},\ldots,T_{1n})$
satisfies \eqref{eq:4.7}.  The vectors $T_i$ $ (2\leq i \leq k-1)$ can be obtained by
the same way as $T_1.$ Also it follows from \eqref{eq:4.42} that \eqref{eq:4.7} and \eqref{eq:4.41} hold.

Because each entry of the matrix $\mathbf{r}=(r_{ij})$ is periodic in $x_1,\ldots,x_{k-1}$ and the elementary operations above do not change the periodicity, then each $T_i (1\leq i \leq k-1)$ is also periodic. Also by \eqref{eq:4.42}, when $1\leq i\leq k-1,$ $T_{ij}$ has the same regularity as $D^2\tilde w$ and is $C^\infty$ in $\varepsilon.$
\end{proof}

\remark {Since $\lambda_i, \  k\leq i\leq n,$  are all around 0, then
\eqref{eq:4.8} shows that they are not necessarily smooth in $x$ and
$\varepsilon,$ so are the corresponding eigenvectors $T_i $. }

\bigskip
\noindent{\bf Acknowledgements.}  The research of first author is
supported by the National Science Foundation of China No.11171339
and Partially supported by National Center for Mathematics and
Interdisciplinary Sciences. The research of the second author was
supported partially by``The Fundamental Research Funds for Central
Universities of China".

\end{document}